\title{ Universal $L^s$-rate-optimality of $L^r$-optimal quantizers by dilatation and contraction  }
 \author{\textit{Abass  Sagna}}
\date{}
\newtheorem{thm}{Theorem}[section]
\newtheorem{prop}{Proposition}[section]
\newtheorem{cor}{Corollary}[section]
\newtheorem{crit}{Criterion}[section]
\newtheorem{conj}{Conjecture}
\newtheorem{lem}{Lemma}[section]
\newtheorem{rem}{Remark}[section]
\newtheorem{defi}{Definition}[section]
 \newtheorem*{pre*}{Preuve} 
\newenvironment{maliste}
{ \begin{list}
              {$\bullet $}
              {\setlength{\labelwidth}{30pt}
              \setlength{\leftmargin}{35pt}
              \setlength{\itemsep}{\parsep}}}
{ \end{list}}
\numberwithin{equation}{section}
\begin{document}
\maketitle{}

{\small \textit{Laboratoire de probabilit\'es et mod\`eles al\'eatoires, UMR7599, Universit\'e Pierre et Marie Curie,}}

 {\small \textit{Case 188, 4 place Jussieu, 75252 Cedex 05, Paris, France. E-mail address: sagna@ccr.jussieu.fr.}}
\vskip 1.5cm
\begin{abstract}
Let $ r, s>0  $. For a given probability measure $P$ on $\mathbb{R}^d$, let  $(\alpha_n)_{n \geq 1}$ be a sequence of  (asymptotically) $L^r(P)$- optimal quantizers. For all  $\mu \in \mathbb{R}^d $ and for every  $\theta >0$, one defines the sequence $(\alpha_n^{\theta, \mu})_{n \geq 1}$ by : $\forall n \geq 1, \ \alpha_n^{\theta, \mu} = \mu + \theta(\alpha_n - \mu) = \{ \mu + \theta(a- \mu), \ a \in \alpha_n \} $. In this paper, we are interested in the asymptotics of the $L^s$-quantization error induced by the sequence $(\alpha_n^{\theta, \mu})_{n \geq 1}$. We show that for a wide family of distributions, the sequence $(\alpha_n^{\theta, \mu})_{n \geq 1}$ is $L^s$-rate-optimal. For  the Gaussian and the  exponential distributions, one shows  how to choose the parameter $\theta$ such that $(\alpha_n^{\theta, \mu})_{n \geq 1}$ satisfies the empirical measure theorem and probably be asymptotically $L^s$-optimal.
\end{abstract}
\section{ Introduction}

Let $(\Omega,\mathcal{A},\mathbb{P})$ be a  probability space and  let   $X : (\Omega,\mathcal{A},\mathbb{P}) \longrightarrow  \mathbb{R}^{d} $ be a random variable with  distribution $\mathbb{P}_X=P$. Let $\alpha\subset \mathbb{R}^d$ be a subset (a codebook) of size $n$. A  Borel partition $C_a(\alpha)_{a \in \alpha}$ of $\mathbb{R}^d$   satisfying $$ C_a(\alpha) \subset \{ x \in \mathbb{R}^d : \vert x-a \vert = \min_{b \in \alpha_n}\vert x-b \vert \},$$
where $ \vert \cdot \vert $ denotes a  norm on $\mathbb{R}^d$ is called a Voronoi partition of  $\mathbb{R}^d$ (with respect to $\alpha$ and $ \vert \cdot \vert $). \\
 The random variable  $\widehat{X}^{\alpha}$ taking values in the codebook  $\alpha$ defined by 
$$ \widehat{X}^{\alpha} = \sum_{a \in \alpha} a \mbox{\bf{1}}_{\{X \in C_a(\alpha)\}} . $$ is called a Voronoi quantization of $X$.  In other words, it is the  nearest neighbour projection of $X$ onto the codebook  (also called grid)  $\alpha$.

The $n$-$L^r(P)$-optimal quantization  problem for $P$ (or  $X$) consists in the study of the best approximation of  $X$  by a Borel function taking  at most  $n$ values. For $X \in L^r(\mathbb{P})$ this leads to the following  optimization problem:
$$ e_{n,r}(X) = \inf{\{ \Vert X - \widehat{X}^{\alpha} \Vert_r, \alpha \subset \mathbb{R}^d,  \textrm{ card}(\alpha) \leq n \}}  $$
 with  $$\Vert X - \widehat{X}^{\alpha} \Vert_r^r = \mathbb{E} \big( d(X,\alpha) \big)^r = \int_{\mathbb{R}^d}d(x,\alpha)^rdP(x). $$ 
Then we can write
\begin{equation} \label{er.quant}
 e_{n,r}(X) = e_{n,r}(P) = \inf_{ \substack{\alpha \subset \mathbb{R}^d \\ \textrm{card}(\alpha) \leq n}} \left(\int_{\mathbb{R}^d} d(x,\alpha)^r dP(x) \right)^{1/r}.
\end{equation}
We remind in what follows some definitions and results  that  will be used throughout the paper. 

\begin{itemize}
\item  For all $n \geq 1$, the infimum in $(\ref{er.quant})$ is reached at  one (at least) grid $\alpha^{\star}$;  $\alpha^{\star} $ is then called a  $L^r$-optimal $n$-quantizer. In addition, if  $\textrm{ card(supp}(P)) \geq n $ then card$(\alpha^{\star}) = n$ (see $\cite{GL}$ or $\cite{P}$). Moreover the quantization error, $e_{n,r}(X)$, decreases to zero as $n$ goes to infinity and the so-called Zador's Theorem mentionned below gives its convergence rate provided a moment assumption on $X$.

\item   Let $X \sim P$ and let $P=P_a + P_s$ be the Lebesgue decomposition of $P$ with respect to the Lebesgue measure $\lambda_d$, where $P_a$ denotes the absolutely  continuous  part and $P_s$ the singular part of $P$.
 
\textbf{Zador Theorem }(see $\cite{GL}$) :  Suppose  $\mathbb{E}\vert X \vert^{r+ \eta} < + \infty \textrm{ for some } \eta >0 $. Then $$  \lim_{n \rightarrow +\infty} n^{r/d} (e_{n,r}(P))^r = Q_r(P).$$
with $$ Q_r(P) = J_{r,d} \left( \int_{\mathbb{R}^d} f^{\frac{d}{d+r}} d\lambda_d \right)^{\frac{d+r}{d}} = J_{r,d} \ \Vert f \Vert_{\frac{d}{d+r}} \ \in [0,+\infty), $$
 $$ J_{r,d} = \inf_{n \geq 1} e_{n,r}^r(U([0,1]^d)) \in (0,+\infty), $$
where $ U([0,1]^d) $ denotes the uniform distribution on the set  $[0,1]^d$ and  $f = \frac{dP_{a}}{d\lambda_d}$. Note that the moment assumption : $\mathbb{E}\vert X \vert^{r+ \eta} < + \infty $ ensure that  $\Vert f \Vert_{\frac{d}{d+r}} $ is finite. Furthermore,  $ \ Q_r(P) >0 \ $ if and only if  $P_a$  does not vanish. 
\item A sequence of $n$-quantizers $(\alpha_n)_{n \geq 1}$ is 

\begin{enumerate}
\item[-]  \textbf{$L^r(P)$-rate-optimal}  (or \textbf{rate-optimal} for $X$, $X \sim P$) if
$$ \limsup_{n \rightarrow +\infty}n^{1/d} \int_{\mathbb{R}^d}d(x,\alpha_n)^r dP(x) < +\infty,  $$ 
     
\item[-]  \textbf{asymptotically $L^r(P)$-optimal}  if
$$ \lim_{n \rightarrow +\infty} n^{r/d} \int_{\mathbb{R}^d}d(x,\alpha_n)^r dP(x) = Q_r(P) $$ 

 \item[-]  \textbf{$L^r(P)$-optimal}  if for all $n \geq 1$,
$$e_{n,r}^r(P) =  \int_{\mathbb{R}^d} d(x,\alpha_n)^r dP(x) .$$
\end{enumerate}

\item \textbf{Empirical  measure theorem} (see $\cite{GL}$) : Let $X \sim P$. Suppose $P$ is absolutely continuous with respect to $\lambda_d$  and   $\mathbb{E}\vert X \vert^{r+ \eta} < + \infty $ for some  $ \eta >0 $. Let  $(\alpha_n)_{n \geq 1}$  be an  asymptotically  $L^r(P)$-optimal sequence of quantizers. Then
 
\begin{equation} \label{empthm}
 \frac{1}{n} \sum_{a \in \alpha_n} \delta_a \stackrel{w}{\longrightarrow} P_r 
\end{equation}
 where  $ \stackrel{w}{\longrightarrow} $ denotes the weak convergence and for every Borel set $A$ of $\mathbb{R}^d , \ P_r$ is defined by
 \begin{equation} \label{eqmesemp}
P_r(A) = \frac{1}{C_{f,r}}  \int_A f(x)^{\frac{d}{d+r}} d\lambda_d(x) , \ \textrm{with}  \quad C_{f,r} = \int_{\mathbb{R}^d} f(x)^{\frac{d}{d+r}}  d\lambda_d(x) .
\end{equation}

\item    In $ \cite{GLP} $ is  established the following proposition.  
 
\textbf{Proposition :} \  Let  $X \sim P, \textrm{ with } P_a \not= 0 $,  such  that $\mathbb{E} \vert X  \vert^{r+\eta}  < \infty,  \textrm{ for some } \eta > 0$ .   Let    $(\alpha_n)$ be an  $L^r(P)$-optimal sequence of quantizers,  $ b \in \big(0,1/2 \big) \textrm{ and let  } \psi_b : \mathbb{R}^d \longrightarrow \mathbb{R}_{+} \cup \{+\infty \}  $ be the maximal function defined by
\begin{equation} \label{defoncmax}
\psi_b(x) = \sup_{n \geq 1} \frac{\lambda_d(B(x,bd(x,\alpha_n)))}{P(B(x,bd(x,\alpha_n)))} .
\end{equation}
 Then for every  $x \in \mathbb{R}^d$, 
\begin{equation} \label{propGLP}
 \forall n \geq 1, \quad  n^{1/d} d(x,\alpha_n) \leq C(b) \psi_b(x)^{1/(d+r)} 
\end{equation} 
 where  $C(b)$ denotes  a real constant  not  depending on $n$.

\item[$\bullet$]  The next proposition  is established   in $\cite{FP}$. It is used to compute the $L^r$-optimal quantizers for the exponential distribution.

  \textbf{Proposition} \  Let $r>0$ and let $X$ be an exponentially distributed random variable with scale parameter $\lambda>0$. Then for every  $n \geq 1$, the $L^r$-optimal quantizer $\alpha_n = (\alpha_{n1}, \cdots, \alpha_{nn})$ is unique and given by 
\begin{equation}
\alpha_{nk} = \frac{a_n}{2}  +  \sum_{i=n+1-k}^{n-1} a_i, \quad 1 \leq k \leq n,
\end{equation}   
where  $(a_k)_{k \geq 1}$ is a $\mathbb{R}_{+}$-valued sequence defined by the following implicit recursive equation: 
\begin{equation*}
a_0 := +\infty, \quad \phi(-a_{k+1}) := \phi(a_k), \ k \geq 0 
\end{equation*}
with  $\phi(x) := \int_0^{x/2} \vert u \vert^{r-1} \textrm{sign}(u) e^{-u} du $ ( convention : $ 0^0 = 1$).

Furthermore, the sequence  $(a_k)_{k \geq 1}$ decreases to zero and  for every  $k \geq 1$, $$a_k = \frac{r+1}{k}\big(1+ \frac{c_r}{k} + \mbox{O}(\frac{1}{k^2}) \big) $$
for some real constant $c_r$. 

\end{itemize}
{\sc notations } 
\begin{maliste}
\item Let $\alpha_n$ be a set of  $n$ points of $\mathbb{R}^d$ . For every  $\mu \in \mathbb{R}^d $ and  every $\theta >0 $ we denote  \\ $\alpha_n^{\theta,\mu} = \mu + \theta(\alpha_n - \mu) = \{\mu + \theta(a - \mu),\ a \in \alpha_n \}$. 
\item Let  $f :\mathbb{R}^d \longrightarrow \mathbb{R}^d $ be a Borel function and let $\mu \in \mathbb{R}^d , \theta>0$. One notes by  $f_{\theta,\mu}$ (or  $f_{\theta}$ if $\mu=0$) the function defined by $f_{\theta,\mu}(x) =f(\mu + \theta(x-\mu)), \ x\in \mathbb{R}^d. $
\item  If  $X \sim P$, $P_{\theta,\mu}$ will stand for  the probability measure of the random variable  $\ \frac{X-\mu}{\theta} + \mu, \ \theta>0, \mu \in \mathbb{R}^d $. In other words, it is the distribution image of $P$  by $x \longmapsto \frac{x-\mu}{\theta} + \mu.$ Note that if $P=f \cdot \lambda_d$ then $P_{\theta,\mu} = f_{\theta,\mu} \cdot \lambda_d$.

\item If $A$ is  a matrix $A'$   stands for  its transpose. 
\item  Set $x=(x_1,\cdots,x_d); \ y=(y_1,\cdots,y_d) \in \mathbb{R}^d$; we denote $[x,y]= [x_1,y_1] \times  \cdots \times [x_d,y_d]$.
\end{maliste}
\vskip 0.4cm
\begin{defi}
A sequence of quantizers $(\beta_n)_{n \geq 1}$  is called a \textbf{dilatation} of the sequence $(\alpha_n)_{n \geq 1} $ with \textbf{scaling number}  $\theta$ and \textbf{translating number} $\mu$ if, for every $n \geq 1, \  \beta_n = \alpha_n^{\theta,\mu}$, with  $\theta > 1$. If $\theta < 1 $, one defines likewise  the \textbf{contraction}  of the sequence $(\alpha_n)_{n \geq 1} $ with \textbf{scaling number}  $\theta$ and \textbf{translating number} $\mu$. 
 \end{defi}
\section{Lower  estimate}
Let $r,s >0$. Consider an asymptotically $L^r(P)$-optimal sequence of quantizers $(\alpha_n)_{n \geq 1}$ . For every  $\mu \in \mathbb{R}^d $ and  every $\ \theta >0$,  we  construct the sequence  $(\alpha_n^{\theta,\mu})_{n \geq 1}$ and try to lower bound asymptotically the $L^s$-quantization  error induced by this sequence. This estimation provides a  necessary  condition of rate-optimality for the sequence $(\alpha_n^{\theta,\mu})_{n \geq 1}$. Obviously, in the particular case where $\theta=1$ and $\mu=0$ we get the same result as in $\cite{GLP}$, a paper we will essentially draw on.

\begin{thm} \label{thm1}  Let  $r,s \in (0,+\infty )$, and let $X$ be a random variable taking values in $\mathbb{R}^d$  with  distribution $P$  $\textrm{ such that } \ P_a =  f.\lambda_d \not \equiv 0$. Suppose that $\mathbb{E}\vert X \vert^{r+\eta} < \infty \textrm{ for some } \eta >0$.
 Let $(\alpha_n)_{n \geq 1}$ be an asymptotically $L^r(P)$-optimal sequence of quantizers. Then, for every  $\theta >0$  and  every $\mu \in \mathbb{R}^d $,
\begin{equation} \label{liminf} 
 \liminf_{n \rightarrow + \infty}  n^{s/d} \ \Vert X - \widehat{X}^{\alpha_n^{\theta,\mu}} \Vert_s^s \geq Q_{r,s}^{\textrm{Inf}}(P,\theta),
 \end{equation}
with
$$ Q_{r,s}^{\textrm{Inf}}(P,\theta) = \theta^{s+d}  J_{s,d} \left(  \int_{\mathbb{R}^d} f^{\frac{d}{d+r}} d\lambda_d   \right)^{s/d} \int_{\{f>0\}} f_{\theta, \mu} f^{- \frac{s}{d+r}} d\lambda_d. $$

\end{thm}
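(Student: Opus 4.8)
The plan is to reduce the $L^s$-error of the dilated quantizer $\alpha_n^{\theta,\mu}$ against $X\sim P$ to an $L^s$-error of an ordinary (undilated) quantizer against the rescaled distribution $P_{1/\theta,\mu}$, and then apply the lower-bound machinery of \cite{GLP} (the maximal-function Proposition above) to that rescaled problem. The key algebraic observation is a change of variables: by definition $d(x,\alpha_n^{\theta,\mu}) = d(x,\mu+\theta(\alpha_n-\mu))$, and writing $x = \mu + \theta(y-\mu)$ we get $d(x,\alpha_n^{\theta,\mu}) = \theta\, d(y,\alpha_n)$. Hence
$$ \Vert X - \widehat X^{\alpha_n^{\theta,\mu}}\Vert_s^s = \int_{\mathbb{R}^d} d(x,\alpha_n^{\theta,\mu})^s\, f(x)\, d\lambda_d(x) = \theta^{s+d}\int_{\mathbb{R}^d} d(y,\alpha_n)^s\, f_{\theta,\mu}(y)\, d\lambda_d(y), $$
the extra $\theta^d$ coming from the Jacobian $d\lambda_d(x) = \theta^d\, d\lambda_d(y)$. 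So it suffices to lower bound $\liminf_n n^{s/d}\int d(y,\alpha_n)^s f_{\theta,\mu}(y)\,d\lambda_d(y)$ by $J_{s,d}\big(\int f^{d/(d+r)}d\lambda_d\big)^{s/d}\int_{\{f>0\}} f_{\theta,\mu} f^{-s/(d+r)} d\lambda_d$.

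For this last step I would follow \cite{GLP} closely. First restrict the integral to $\{f>0\}$ (on $\{f=0\}$ the integrand vanishes, so this only decreases nothing). On $\{f>0\}$, apply the Proposition of \cite{GLP} to the $L^r(P)$-setting: there is a maximal function $\psi_b$ with $n^{1/d} d(y,\alpha_n)\le C(b)\psi_b(y)^{1/(d+r)}$, and a standard computation (Lebesgue differentiation) gives $\psi_b(y) < \infty$ for $P_a$-a.e.\ $y$ and in fact $\liminf_n n^{s/d} d(y,\alpha_n)^s$ is controlled below in terms of the local density; more precisely the empirical-measure/firm-lower-bound arguments of \cite{GLP} yield, for $\lambda_d$-a.e.\ $y\in\{f>0\}$,
$$ \liminf_{n\to\infty} n^{s/d}\, d(y,\alpha_n)^s \;\ge\; J_{s,d}\Big(\int_{\mathbb{R}^d} f^{\frac{d}{d+r}}d\lambda_d\Big)^{s/d} f(y)^{-s/(d+r)}. $$
Then I would invoke Fatou's lemma: $\liminf_n n^{s/d}\int d(y,\alpha_n)^s f_{\theta,\mu}(y)\,d\lambda_d(y) \ge \int \liminf_n\big(n^{s/d} d(y,\alpha_n)^s\big) f_{\theta,\mu}(y)\,d\lambda_d(y)$, and the right-hand side is bounded below by the desired expression. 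Combining with the change of variables and the factor $\theta^{s+d}$ gives \eqref{liminf}.

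The main obstacle is establishing the pointwise $\liminf$ lower bound $\liminf_n n^{s/d} d(y,\alpha_n)^s \ge J_{s,d}\,\|f\|_{d/(d+r)}^{s/d}\, f(y)^{-s/(d+r)}$ for a.e.\ $y$, with $s$ possibly different from $r$. The upper estimate \eqref{propGLP} from \cite{GLP} is an almost-sure pointwise \emph{upper} bound on $n^{1/d}d(y,\alpha_n)$, which is exactly what is needed to justify applying Fatou (domination is not literally required for Fatou, but one does need the $\liminf$ to be the right size, not $0$). The genuine content is the matching pointwise lower bound, which comes from the asymptotic $L^r$-optimality of $(\alpha_n)$ together with the empirical measure theorem \eqref{empthm}: since $\frac1n\sum_{a\in\alpha_n}\delta_a \stackrel{w}{\to} P_r$ with $P_r$ having density proportional to $f^{d/(d+r)}$, the local point density of $\alpha_n$ near $y$ is asymptotically $\sim n\, f(y)^{d/(d+r)}/C_{f,r}$, hence the nearest point is at distance $\gtrsim n^{-1/d}(f(y)^{d/(d+r)}/C_{f,r})^{-1/d}$, and taking $s$-th powers and the sharp constant $J_{s,d}$ (via the firm lower bound for quantization on small cubes where $P$ looks uniform) yields the claimed bound. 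I would carry out this localization argument carefully — covering $\{f>0\}$ by small cubes on which $f$ is nearly constant, using that $(\alpha_n)$ restricted to such a cube behaves asymptotically like a uniform-distribution quantizer with the appropriate number of points — since this is where the interplay between the $L^r$-optimality hypothesis and the $L^s$-error being estimated is decisive.
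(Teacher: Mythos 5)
Your change of variables, and the target inequality
$$\liminf_n n^{s/d}\int d(y,\alpha_n)^s f_{\theta,\mu}(y)\,d\lambda_d(y)\ge J_{s,d}\Big(\int f^{\frac d{d+r}}d\lambda_d\Big)^{s/d}\int_{\{f>0\}}f_{\theta,\mu}f^{-\frac s{d+r}}d\lambda_d,$$
are exactly the right reduction, and the paper does begin the same way. But the route you take from there has a genuine gap: the pointwise lower bound
$$\liminf_{n\to\infty} n^{s/d}\, d(y,\alpha_n)^s \;\ge\; J_{s,d}\Big(\int f^{\frac d{d+r}}d\lambda_d\Big)^{s/d} f(y)^{-\frac s{d+r}}\qquad\text{for $\lambda_d$-a.e.\ }y\in\{f>0\}$$
is simply false, and false on a set of full measure, not just on a negligible exceptional set. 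Take $d=1$, $P=U([0,1])$, $f\equiv 1$ on $[0,1]$, so the claimed bound reads $\liminf_n n^s\, d(y,\alpha_n)^s\ge J_{s,1}>0$ a.e. The $L^r$-optimal $n$-quantizer for $U([0,1])$ is $\alpha_n=\{(2k-1)/(2n):1\le k\le n\}$, and then $n\, d(y,\alpha_n)$ equals the distance from $ny+\tfrac12$ to the nearest integer. For every irrational $y$ the sequence $ny\bmod 1$ is equidistributed, so this distance comes arbitrarily close to $0$ along a subsequence; hence $\liminf_n n\, d(y,\alpha_n)=0$ for \emph{every} irrational $y$. The heuristic that the nearest codepoint sits at distance comparable to the local mesh size is correct on average, but the pointwise $\liminf$ picks out the lucky indices $n$ at which a codepoint happens to land near $y$, and those occur infinitely often for a.e.\ fixed $y$. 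Consequently Fatou applied to your pointwise bound only yields a trivial (possibly zero) lower bound, not the sharp constant. The pointwise \emph{upper} bound \eqref{propGLP} is of a different nature — a $\sup_n$ is immune to this effect, a $\liminf_n$ is not.

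This is precisely why the paper does not attempt a pointwise estimate: it replaces $f_{\theta,\mu}$ by a simple function supported on finitely many disjoint compact sets $A_{k,l}^m$, and lower bounds the \emph{integrated} quantity $\int_{A_{k,l}^m} d(x,\alpha_n)^s\,d\lambda_d(x)$ by $\lambda_d(A_{k,l}^m)\,e^s_{n_{k,l}^m,s}\big(U(A_{k,l}^m)\big)$ via the firewall sets of \cite{DGLP}, where $n_{k,l}^m$ is the number of points of $\alpha_n\cup\beta^m$ that actually matter inside a slight enlargement of $A_{k,l}^m$. The $\liminf$ on this quantization error for the uniform law on $A_{k,l}^m$ does give the constant $J_{s,d}\lambda_d(A_{k,l}^m)^{s/d}$, while $n_{k,l}^m/n$ is controlled by the empirical measure theorem applied to $(\alpha_n)$. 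In other words, the quantity that behaves regularly and admits the sharp lower bound is the integral of $d(\cdot,\alpha_n)^s$ over small sets, not the integrand at a point. To repair your proposal you would need to give up on the pointwise $\liminf$ and carry out this set-localization: approximate $f_{\theta,\mu}$ and $f$ simultaneously by simple functions on compact pieces, localize the codebook with firewall sets, invoke the uniform-distribution quantization lower bound on each piece, combine by super-additivity of $\liminf$, and finally pass to the limit in $m$ via Fatou — which is the paper's proof.
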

\vskip 0.4cm
\begin{proof}[\textbf{Proof}]
 Let $ m \geq 1 $ and 
  \[ f_m^{\theta,\mu} = \sum_{k,l=0}^{m2^m-1} \frac{l}{2^m} \mbox{\bf{1}}_{E_k^m \cap G_l^m } ; \]
 with $$ E_k^m = \left\{ \frac{k}{2^m} \leq f < \frac{k+1}{2^m} \right\} \cap B(0,m) \textrm{ and } G_l^m = \left\{ \frac{l}{2^m} \leq f_{\theta,\mu} < \frac{l+1}{2^m} \right\} \cap B(0,m) . $$
 The sequence $ (f_m^{\theta,\mu})_{m \geq 1} $ is non-decreasing and  $$ \lim_{m \rightarrow + \infty} f_m^{\theta,\mu} = f_{\theta,\mu}   \qquad \lambda_d \textrm{ p.p}.  $$
Let $$ I_m = \{ (k,l) \in \{ 0, \cdots,m2^m-1 \}^2 : \lambda_d(E_k^m)>0; \lambda_d(G_l^m)>0  \}. $$
For every  $ (k,l) \in I_m $ there exists compact sets  $K_k^m  \textrm{ and } L_l^m $  such that :
$$ K_k^m \subset E_k^m \textrm{ , } L_l^m \subset G_l^m  \textrm{ , } \lambda_d (E_k^m \backslash K_k^m ) \leq \frac{1}{m^4 2^{2m+1}} \textrm{ and }  \lambda_d (G_l^m \backslash  L_l^m ) \leq \frac{1}{m^4 2^{2m+1}} . $$
Then
\begin{eqnarray*}
 (E_k^m \cap G_l^m )\backslash (K_k^m \cap L_l^m) 
& = &  E_k^m \cap G_l^m \cap ((K_k^m)^{c} \cup (L_l^m)^{c}) \\
& \subset &  (E_k^m \backslash K_k^m) \cup (G_l^m \backslash L_l^m).
\end{eqnarray*}
Consequently,
\begin{eqnarray*}
 \lambda_d(E_k^m \cap G_l^m \backslash K_k^m \cap L_l^m) 
& \leq & \lambda_d(E_k^m \backslash K_k^m) + \lambda_d(G_l^m \backslash L_l^m) \\ & \leq &
\frac{1}{m^4 2^{2m+1}} + \frac{1}{m^4 2^{2m+1}} \\ & = &
\frac{1}{m^4 2^{2m}}.
\end{eqnarray*}
For every $m \geq 1$ and every $(k,l) \in I_m$, set  

 \[ A_{k,l}^m := K_k^m \cap L_l^m, \]

\[ \tilde{f}_m^{\theta,\mu} := \sum_{k,l=0}^{m2^m-1} \frac{l}{2^m} \mbox{\bf{1}}_{A_{k,l}^m},  \] 
and
$$ \tilde{f}_m := \sum_{k,l=0}^{m2^m-1} \frac{k}{2^m} \mbox{\bf{1}}_{A_{k,l}^m}. $$
We get $$ \{ f_m^{\theta,\mu} \not= \tilde{f}_m^{\theta,\mu} \} \subset \bigcup_{k,l \in \{0,\cdots,m2^m-1 \}} \left((E_k^m \cap G_l^m)  \backslash A_{k,l}^m \right).  $$
Therefore, for every  $m \geq 1,$  $$ \lambda_d(\{f_m^{\theta,\mu} \not= \tilde{f}_m^{\theta,\mu} \}) \leq \sum_{k,l=0}^{m2^m-1} \frac{1}{m^4 2^{2m}} = \frac{1}{m^2}$$
and finally   $$ \sum_{m \geq 1} \mbox{\bf{1}}_{\{ f_m^{\theta,\mu} \not= \tilde{f}_m^{\theta,\mu} \}} < \infty  \qquad \lambda_d \textrm{ p.p}. $$
As a consequently  $ \lambda_d(dx) \textrm{-p.p}  \textrm{,\ }  f_m^{\theta,\mu}(x) =  \tilde{f}_m^{\theta,\mu}(x) $ for large enough $m$. Then  $ \tilde{f}_m^{\theta,\mu} \stackrel{\lambda_d  \  p.p.}{\longrightarrow} f_{\theta,\mu}  \textrm{ when } \\ m \rightarrow +\infty.$  Since in addition $A_{k,l}^m \subset E_k^m \cap G_l^m $  we obtain
\begin{equation} \label{eq}
 \tilde{f}_m^{\theta,\mu} \leq f_m^{\theta,\mu} \leq f_{\theta,\mu} .
\end{equation}
Moreover, for every $n \geq 1$, 
\begin{eqnarray*}
 n^{s/d} \ \Vert X - \widehat{X}^{\alpha_n^{\theta,\mu}}  \Vert_s^{s} 
& = & n^{s/d} \int_{\mathbb{R}^d} d(z, \mu +\theta (\alpha_n +\mu))^s f(z) \lambda_d(dz) \\ & \geq &
n^{s/d} \int_{\mathbb{R}^d} \min_{a \in \alpha_n} \vert z-(\mu+\theta(a-\mu)) \vert^s f(z) \lambda_d(dz) \\ & \geq &
\theta^s n^{s/d} \int_{\mathbb{R}^d} \min_{a \in \alpha_n}\vert (z-\mu)/ \theta + \mu - a \vert^s f(z) \lambda_d(dz).
\end{eqnarray*}
Making the change of variable  $x :=(z-\mu)/ \theta + \mu$ yields:
\begin{eqnarray} \label{suradd}
n^{s/d} \ \Vert X - \widehat{X}^{\alpha_n^{\theta,\mu}}  \Vert_s^{s} 
& \geq & \theta^{s+d} n^{s/d} \int_{\mathbb{R}^d} d(x,\alpha_n)^s f_{\theta,\mu}(x) \lambda_d(dx) \nonumber \\ & \geq &
\theta^{s+d} n^{s/d} \int_{\mathbb{R}^d} d(x,\alpha_n)^s \tilde{f}_m^{\theta,\mu} \lambda_d(dx) \qquad \textrm{( by  (\ref{eq}) )}  \nonumber  \\
& = & \theta^{s+d} n^{s/d} \sum_{k,l=0}^{m2^m-1} \frac{l}{2^m} \int_{A_{k,l}^m} d(x,\alpha_n)^s \lambda_d(dx).
\end{eqnarray}
 Let  $m \geq 1 $ and $ (k,l) \in I_m$. Define the closed sets $\tilde{A}_{k,l}^m$ by  $\tilde{A}_{k,l}^m=\emptyset$ if $\lambda_d(\tilde{A}_{k,l}^m) = 0$ and otherwise by  $$ \tilde{A}_{k,l}^m = \{ x \in \mathbb{R}^d : d(x,A_{k,l}^m) \leq \varepsilon_m \} $$
where $\varepsilon_m \in (0,1]$ is chosen 
so that $$\int_{\tilde{A}_{k,l}^m} f^{\frac{d}{d+r}}d\lambda_d \leq \big(1+ 1/m \big) \int_{A_{k,l}^m} f^{\frac{d}{d+r}}d\lambda_d. $$
Since $   \tilde{A}_{k,l}^m  \textrm{ is compact }( \tilde{A}_{k,l}^m   \subset B(0,m+1)) \textrm{ } \forall (k,l)  $, there exists  ( ref. \cite{DGLP}, \textit{Lemma 4.3}) a finite \\ " firewall" set  $\beta_{k,l}^m $ such that 
$$\forall n \geq 1, \quad \forall x \in \tilde{A}_{k,l}^m, \quad d(x,\alpha_n \cup \beta_{k,l}^m)=d(x,(\alpha_n \cup \beta_{k,l}^m)\cap \tilde{A}_{k,l}^m). $$
The last inequality is in particular  satisfied for all  $ x \in A_{k,l}^m  $ since  $  A_{k,l}^m \subset \tilde{A}_{k,l}^m$. \\
Now set $ \beta^{m} = \bigcup_{k,l} \beta_{k,l}^m \ \textrm{ and } \ n_{k,l}^m = \textrm{card}((\alpha_n \cup \beta^m)\cap \tilde{A}_{k,l}^m) .$  The empirical measure theorem (see $(\ref{empthm})$) yields  
$$ \limsup_n    \frac{ \textrm{card}(\alpha_n \cap \tilde{A}_{k,l}^m)}{n} =  \frac{\int_{\alpha_n \cap \tilde{A}_{k,l}^m} f^{\frac{d}{d+r}} d\lambda_d}{\int  f^{\frac{d}{d+r}} d\lambda_d }  \leq  \frac{\int_{\tilde{A}_{k,l}^m} f^{\frac{d}{d+r}} d\lambda_d}{\int  f^{\frac{d}{d+r}} d\lambda_d } . $$
Moreover  $$ \frac{n_{k,l}^m}{n} \sim  \frac{ \textrm{card}(\alpha_n \cap \tilde{A}_{k,l}^m)}{n}  \qquad  \textrm{ when }  \ n \rightarrow + \infty $$
then
\begin{equation}  \label{ineq1GLP} 
 \liminf_{n \rightarrow +\infty}  \frac{n}{n_{k,l}^m}  \geq   \frac{\int f^{\frac{d}{d+r}} d \lambda_d}{\int_{\tilde{A}_{k,l}^m} f^{\frac{d}{d+r}} d\lambda_d}  \geq  \frac{m}{m+1} \frac{\int f^{\frac{d}{d+r}} d \lambda_d}{\int_{A_{k,l}^m} f^{\frac{d}{d+r}} d\lambda_d}. 
 \end{equation}
 In the other hand, 
\begin{eqnarray*}
 \int_{A_{k,l}^m} d(x,\alpha_n)^s  \lambda_d(dx) 
& \geq & \int_{A_{k,l}^m } d(x,(\alpha_n \cup \beta_{k,l}^m) \cap \tilde{A}_{k,l}^m)^s \lambda_d(dx) \\
& = & \lambda_d(A_{k,l}^m) \int d(x,(\alpha_n \cup \beta_{k,l}^m) \cap \tilde{A}_{k,l}^m)^s \mbox{\bf{1}}_{A_{k,l}^m}(x) \frac{\lambda_d(dx)}{\lambda_d(A_{k,l}^m)} \\
& \geq &  \lambda_d(A_{k,l}^m)e_{n_{k,l}^m,s}^s(U(A_{k,l}^m)),
\end{eqnarray*}
where $U(A) = \mbox{\bf{1}}_A / \lambda_d(A)$  denotes the uniform distribution in the Borel set $A$ when $\lambda_d(A) \not= 0.$
Then we can write for every $(k,l) \in I_m$,
$$\liminf_{n\rightarrow +\infty}  n^{s/d}\int_{A_{k,l}^m} d(x,\alpha_n)^s \lambda_d(dx)  \geq  \lambda_d(A_{k,l}^m) \liminf_{n} \left( \frac{n}{n_{k,l}^m}  \right)^{s/d} \liminf_{n} n^{s/d} e_{n,s}^s(U(A_{k,l}^m)) ,  $$
since   $$ \liminf_{n} n^{s/d} e_{n,s}^s(U(A_{k,l}^m))  \geq  J_{s,d} \cdot \lambda_d(A_{k,l}^m) ^{s/d}. $$
Owing to  Equation  $(\ref{ineq1GLP})$, one has
$$ \liminf_{n\rightarrow +\infty}  n^{s/d}\int_{A_{k,l}^m} d(x,\alpha_n)^s \lambda_d(dx)   \geq  \lambda_d(A_{k,l}^m) \left( \frac{m}{m+1} \frac{\int f^{\frac{d}{d+r}} d \lambda_d}{\int_{A_{k,l}^m} f^{\frac{d}{d+r}} d\lambda_d} \right)^{s/d}  J_{s,d} \cdot \lambda_d(A_{k,l}^m) ^{s/d}. $$
However, on the sets  $ A_{k,l}^m,  \ \textrm{ the statement }  \quad   \frac{1}{f} \geq  \left( \frac{k+1}{2^m} \right)^{-1}$  holds   since  $f < \frac{k+1}{2^m}  $ on   $E_k^m$. Hence
 $$ \liminf_{n\rightarrow +\infty}  n^{s/d}\int_{A_{k,l}^m} d(x,\alpha_n)^s \lambda_d(dx)  \geq  J_{s,d}  \left( \frac{m+1}{m} \int f^{\frac{d}{d+r}} \lambda_d(dx) \right)^{s/d} \left( \frac{k+1}{2^m} \right)^{-\frac{d}{d+r} \cdot \frac{s}{d}} \lambda_d(A_{k,l}^m).$$
It follows from Equation  $(\ref{suradd})$ and the super-additivity of the liminf that  for every  $m \geq 1$,
\begin{eqnarray*}
\liminf_{n}   n^{s/d} \ \Vert X - \widehat{X}^{\alpha_n^{\theta,\mu}}  \Vert_s^{s} 
& \geq & \theta^{s+d}J_{s,d}  \left( \frac{m+1}{m} \int f^{\frac{d}{d+r}} \lambda_d(dx) \right)^{s/d}  \sum_{k,l=0}^{m2^m-1} \frac{l}{2^m} \left( \frac{k+1}{2^m} \right)^{-\frac{s}{d+r} } \lambda_d(A_{k,l}^m) \\
& \geq & \theta^{s+d}J_{s,d}  \left( \frac{m+1}{m} \int f^{\frac{d}{d+r}} \lambda_d(dx) \right)^{s/d} \int_{\{f>0 \}} \tilde{f}_m^{\theta,\mu}(\tilde{f}_m + 2^{-m})^{-\frac{s}{d+r}} d\lambda_d .
\end{eqnarray*}
Finally, applying Fatou's Lemma  yields
\begin{equation*}
 \liminf_{n \rightarrow + \infty}  n^{s/d} \ \Vert X - \widehat{X}^{\alpha_n^{\theta,\mu}} \Vert_s^s \geq \theta^{s+d}  J_{s,d} \left(  \int_{\mathbb{R}^d} f^{\frac{d}{d+r}} d\lambda_d   \right)^{s/d} \int_{\{f>0\}} f_{\theta,\mu} f^{- \frac{s}{d+r}} d\lambda_d .
\end{equation*}
\end{proof}
\section{Upper estimate}
Let $ r,s > 0$. Let $(\alpha_n)_{n \geq 1}$ be an (asymptotically) $L^r(P)$ - optimal sequence of quantizers. In this section we  will provide some  sufficient conditions of $L^s(P)$-rate-optimality for the sequence  $(\alpha_n^{\theta,\mu})_{n \geq 1}$.
\vskip 0.4cm
\begin{defi}
Let $\theta>0,\ \mu \in \mathbb{R}^d$ and let  $P$ be a probability distribution such that  $P=f\cdot \lambda_d$. The couple $(\theta,\mu)$ is said $P$-\textbf{admissible} if 

\begin{equation} \label{admissible}
\{ f>0 \} \subset \mu(1-\theta) + \theta \{ f > 0 \} \qquad \lambda_d \textrm{-p.p.}
\end{equation}
\end{defi}

\begin{thm} \label{thm2} Let $ r,s \in (0,+\infty), s < r $ and let $X$ be a random variable taking  values in $\mathbb{R}^d$ with distribution $P$ such that $P = f \cdot \lambda_d$. Suppose that $(\theta,\mu)$ is $P$-admissible, for $\theta>0; \mu \in \mathbb{R^d}$,  and  $\mathbb{E}\vert X\vert ^{r+\eta} < \infty , \textrm{ for some } \eta >0 $. Let $(\alpha_n)_{n \geq 1} $ be an asymptotically $ L^r$-optimal sequence. If 
\begin{equation}
 \int_{\{ f > 0 \}} f_{\theta,\mu}^{\frac{r}{r-s}} f^{-\frac{s}{r-s}} d\lambda_d < +\infty 
\end{equation}
 then, $(\alpha_n^{\theta,\mu})_{n \geq 1}$ is $L^s(P)$-rate-optimal and  
\begin{equation}\label{limsup1}
\limsup_{n \rightarrow +\infty} n^{s/d} \ \Vert X - \widehat{X}^{\alpha_n^{\theta,\mu}}  \Vert_s^s \leq \theta^{s +d}\left(Q_r(P)\right)^{s/r} \left( \int_{\{ f > 0 \}} f_{\theta,\mu}^{\frac{r}{r-s}} f^{-\frac{s}{r-s}} d\lambda_d \right)^{1-\frac{s}{r}}.
\end{equation}
\end{thm}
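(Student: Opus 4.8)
The plan is to reduce the estimate, via the same affine change of variables used in the proof of Theorem~\ref{thm1}, to a single application of H\"older's inequality against the known $L^r$-quantization error of $(\alpha_n)_{n\geq1}$. First, exactly as in the derivation of $(\ref{suradd})$ but keeping equalities throughout (no truncation is needed here), from $d(z,\alpha_n^{\theta,\mu})=\theta\, d\big(\tfrac{z-\mu}{\theta}+\mu,\alpha_n\big)$ and the substitution $x=\tfrac{z-\mu}{\theta}+\mu$ (with Jacobian $\theta^d$) one gets
$$n^{s/d}\,\Vert X-\widehat{X}^{\alpha_n^{\theta,\mu}}\Vert_s^s = \theta^{s+d}\, n^{s/d}\int_{\mathbb{R}^d} d(x,\alpha_n)^s\, f_{\theta,\mu}(x)\,\lambda_d(dx).$$

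Second, I would unwind the $P$-admissibility hypothesis $(\ref{admissible})$. With $T(x)=\mu+\theta(x-\mu)$ one has $\{f_{\theta,\mu}>0\}=T^{-1}(\{f>0\})$, and applying the affine bijection $T^{-1}$ to both sides of $(\ref{admissible})$ shows that this hypothesis is equivalent to $\{f_{\theta,\mu}>0\}\subset\{f>0\}$ up to a $\lambda_d$-negligible set. Hence $f_{\theta,\mu}$ vanishes $\lambda_d$-a.e.\ off $\{f>0\}$, so the integral above equals $\int_{\{f>0\}} d(x,\alpha_n)^s f_{\theta,\mu}\,d\lambda_d$.

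Third, on $\{f>0\}$ I would split $f_{\theta,\mu}=f^{s/r}\cdot\big(f_{\theta,\mu}f^{-s/r}\big)$ and apply H\"older's inequality with the conjugate exponents $p=r/s$ and $q=r/(r-s)$ (this is where $s<r$ is used), obtaining
$$\int_{\{f>0\}} d(x,\alpha_n)^s f_{\theta,\mu}\,d\lambda_d \leq \Big(\int_{\mathbb{R}^d} d(x,\alpha_n)^r f\,d\lambda_d\Big)^{s/r}\Big(\int_{\{f>0\}} f_{\theta,\mu}^{\frac{r}{r-s}} f^{-\frac{s}{r-s}}\,d\lambda_d\Big)^{1-\frac{s}{r}}.$$
The first factor is finite because $\mathbb{E}|X|^{r+\eta}<\infty$ forces $\Vert X-\widehat{X}^{\alpha_n}\Vert_r<\infty$, and the second is finite by hypothesis, so the inequality is legitimate. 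Multiplying by $\theta^{s+d} n^{s/d}$, rewriting $n^{s/d}\big(\int d(x,\alpha_n)^r f\,d\lambda_d\big)^{s/r}=\big(n^{r/d}\Vert X-\widehat{X}^{\alpha_n}\Vert_r^r\big)^{s/r}$, letting $n\to\infty$, and using that $(\alpha_n)_{n\geq1}$ is asymptotically $L^r(P)$-optimal (so $n^{r/d}\Vert X-\widehat{X}^{\alpha_n}\Vert_r^r\to Q_r(P)$, finite by Zador's theorem), one arrives at $(\ref{limsup1})$; finiteness of its right-hand side then gives the $L^s(P)$-rate-optimality of $(\alpha_n^{\theta,\mu})_{n\geq1}$.

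The only genuinely delicate point is the second step: without $P$-admissibility the mass of $f_{\theta,\mu}$ carried over $\{f=0\}$ is not controlled by the H\"older bound --- there the splitting $f^{s/r}\cdot(f_{\theta,\mu}f^{-s/r})$ degenerates into a $0\cdot\infty$ indeterminacy --- and indeed the conclusion may fail; admissibility is exactly what confines the computation to $\{f>0\}$. All the remaining steps are routine.
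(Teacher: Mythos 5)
Your proof is correct and follows essentially the same route as the paper's: an affine change of variables to write the error as $\theta^{s+d}n^{s/d}\int d(x,\alpha_n)^s f_{\theta,\mu}\,d\lambda_d$, then $P$-admissibility to confine the integral to $\{f>0\}$, then H\"older with exponents $r/s$ and $r/(r-s)$, then asymptotic $L^r$-optimality. The only (cosmetic) difference is that you apply H\"older directly against $\lambda_d$ on $\{f>0\}$ after a single change of variables, whereas the paper routes the computation through the auxiliary measure $P^\theta=\mathcal{L}(\theta X)$ and changes variables twice; your version is arguably tidier and your closing remark correctly identifies the role of admissibility in avoiding the $0\cdot\infty$ degeneracy on $\{f=0\}$.
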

\vskip 0.4cm
\begin{rem}
 Note that if $\theta=1$ and $\mu=0$ then  $$\int_{\{ f > 0 \}} f_{\theta,\mu}^{\frac{r}{r-s}} f^{-\frac{s}{r-s}} d\lambda_d = \int_{\{ f > 0 \}} f^{\frac{r}{r-s}} f^{-\frac{s}{r-s}} d\lambda_d = \int_{\{ f > 0 \}} f d\lambda_d  = 1.$$
In this case the theorem is trivial since $\ \Vert X - \widehat{X}^{\alpha_n}  \Vert_s \leq \Vert X - \widehat{X}^{\alpha_n}  \Vert_r.$
\end{rem}

\begin{proof}[\textbf{Proof}] 
Let  $P^{\theta}$ denotes the distribution of the random variable   $\theta X $. $P^{\theta}$ is absolutely continuous with respect to $\lambda_d$, with $p.d.f \ g_{\theta}(x) = \theta^{-d} f(\frac{x}{\theta}) $. 

For every $n \geq 1$,
\begin{eqnarray*}
n^{s/d} \ \Vert X- \widehat{X}^{\alpha_n^{\theta,\mu}}  \Vert_s^{s}
& = & n^{s/d} \int_{\mathbb{R}^d} d(x, \alpha_n^{\theta,\mu})^s dP(x) \\ & = &
n^{s/d} \int_{\{f>0 \}} \min_{a \in \alpha_n} \vert x- \mu(1-\theta) - \theta a \vert^s f(x) d\lambda_d(x).
\end{eqnarray*}
Making  the change of variable $z := x - \mu(1-\theta)$  yields
\setlength\arraycolsep{1pt}
\begin{eqnarray}
n^{s/d} \ \Vert X- \widehat{X}^{\alpha_n^{\theta,\mu}}  \Vert_s^{s}
& = & n^{s/d} \int_{\{ f>0 \} - \mu(1-\theta) } d(z,\theta \alpha_n)^s f(z + \mu(1-\theta)) d\lambda_d(z) \nonumber  \\ & \leq &
n^{s/d} \int_{\theta \{ f>0\} } d(z,\theta \alpha_n)^s f(z + \mu(1-\theta)) g_{\theta}^{-1}(z)dP^{\theta}(z) \\ & \leq &
n^{s/d} \left(\int_{\mathbb{R}^d} d(z,\theta \alpha_n)^r  dP^{\theta}(z)\right)^{s/r} \left( \int_{\theta \{ f>0\}} \big(f(z + \mu(1-\theta) )g_{\theta}^{-1}(z) \big)^{\frac{r}{r-s}} dP^{\theta}(z) \right)^{\frac{r-s}{r}}  \nonumber \\ & \leq &
\left(n^{r/d} \Vert \theta X - \widehat{\theta X}^{\theta \alpha_n} \Vert_r^r \right)^{s/r} \left( \int_{\theta \{ f>0\} }  f(z + \mu(1-\theta))^{\frac{r}{r-s}} g_{\theta}^{-\frac{s}{r-s}}(z) d\lambda_d(z) \right)^{\frac{r-s}{r}} \nonumber
\end{eqnarray}
where we used the $P$-admissibility of $(\theta,\mu)$  in the first inequality. 
The second  inequality  derives from \textit{H\"{o}lder} inequality applied with  $ p=r/s >1$ and $q=1-s/r$.

Moreover
\begin{equation}  \label{eq1pthm2}
\Vert \theta X - \widehat{\theta X}^{\theta \alpha_n} \Vert_r^r = \mathbb{E} \big( \min_{a \in \alpha_n} \vert \theta X - \theta a  \vert^r \big) = \theta^r \Vert X- \widehat{X}^{\alpha_n}  \Vert_r^r. 
\end{equation}
Then
 $$n^{s/d} \ \Vert X- \widehat{X}^{\alpha_n^{\theta,\mu}}  \Vert_s^{s} \leq \theta^s \left(n^{r/d} \Vert X - \widehat{X}^{\alpha_n} \Vert_r^r \right)^{s/r} \left( \int_{\theta \{ f>0\}}  f(z + \mu(1-\theta))^{\frac{r}{r-s}} g_{\theta}^{-\frac{s}{r-s}}(z) d\lambda_d(z) \right)^{\frac{r-s}{r}} .$$
 Owing to  the asymptotically $L^r(P)$-optimality of   $(\alpha_n)$  and   making again the change of variable   $  x := z / \theta $ yields
\begin{eqnarray*}
\limsup_{n \rightarrow +\infty} n^{s/d} \ \Vert X- \widehat{X}^{\alpha_n^{\theta,\mu}}  \Vert_s^{s}
& \leq & \theta^s \left(Q_r(P)\right)^{s/r} \left(\theta^{\frac{ds}{r-s}} \int_{\theta \{ f>0\}} f(z + \mu(1-\theta)))^{\frac{r}{r-s}} f(z/\theta)^{-\frac{s}{r-s}} d\lambda_d(z) \right)^{\frac{r-s}{r}}  \\ & = &
\theta^s \left(Q_r(P)\right)^{s/r} \left(\theta^{\frac{rd}{r-s}} \int_{\{ f>0\} } f_{\theta,\mu}(x)^{\frac{r}{r-s}} f(x)^{-\frac{s}{r-s}} d\lambda_d(x) \right)^{\frac{r-s}{r}} \\ & = &
\theta^{s+d} \left(Q_r(P)\right)^{s/r} \left(\int_{\{ f>0\} } f_{\theta,\mu}(x)^{\frac{r}{r-s}} f(x)^{-\frac{s}{r-s}} d\lambda_d(x) \right)^{\frac{r-s}{r}}.
\end{eqnarray*}
\setlength\arraycolsep{2pt}
\end{proof}

When $s>r$, the next theorem provides a less accurate  asymptotic upper bound than the previous one since, beyond the restriction on the distribution of $X$, we  need now  the sequence $(\alpha_n)$ to be   (exactly) $L^r(P)$-optimal. 
\vskip 0.4cm
\begin{thm} \label{thm3}   Let $ r,s \in (0,+\infty),\ s > r,\ \theta >0 $ and  let  $X$ be a random variable taking values in $\mathbb{R}^d$ with distribution  $P$  such that $P=f\cdot \lambda_d$. Suppose that $\mathbb{E}\vert X\vert ^{r+\eta} < \infty  \textrm{ for some } \eta >0 $ and $\ P_{\theta,\mu} \ll P$ \big(i.e $P_{\theta,\mu}$ is absolutely continuous with respect to $P$ \big) for some $\mu \in \mathbb{R}^d$. Let $(\alpha_n)_{n \geq 1} $ be an $ L^r(P)$- optimal sequence of quantizers  and suppose that the maximal function (see $(\ref{defoncmax})$) satisfies
\begin{equation} \label{foncmax} 
 \psi_b^{s/(d+r)} \in \ L^1(P_{\theta,\mu})  \ \textrm{ for some } \ b \in (0,1/2).
\end{equation}
Then, 
\begin{equation} \label{limsup2} 
\limsup_{n} n^{s/d} \ \Vert X- \widehat{X}^{\alpha_n^{\theta,\mu}} \Vert_s^s  \leq \theta^{s+d} C(b) \int f_{\theta,\mu}f^{-\frac{s}{d+r}} d\lambda_d < +\infty
\end{equation}
where $C(b)$ is a positive  real constant not depending on $\theta$ and $n$.
\end{thm}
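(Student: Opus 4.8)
The plan is to carry out the same change of variables as in the proof of Theorem \ref{thm1}, to feed the resulting quantization error into the pointwise bound (\ref{propGLP}) of \cite{GLP}, and then to pass to the limit by a reverse Fatou argument for which hypothesis (\ref{foncmax}) furnishes the dominating function. First I would rewrite the error: since $\alpha_n^{\theta,\mu} = \{\mu + \theta(a-\mu):a\in\alpha_n\}$, the substitution $x = \mu+\theta(y-\mu)$ gives $d(x,\alpha_n^{\theta,\mu}) = \theta\,d(y,\alpha_n)$ and $d\lambda_d(x) = \theta^d\,d\lambda_d(y)$, so that, with $f_{\theta,\mu}(y)=f(\mu+\theta(y-\mu))$,
$$ n^{s/d}\,\Vert X-\widehat X^{\alpha_n^{\theta,\mu}}\Vert_s^s = \theta^{s+d}\int_{\mathbb{R}^d} n^{s/d}\,d(y,\alpha_n)^s\,f_{\theta,\mu}(y)\,d\lambda_d(y). $$
The hypothesis $P_{\theta,\mu}\ll P$ says exactly that $\{f_{\theta,\mu}>0\}\subset\{f>0\}$ up to a $\lambda_d$-null set, so the integration effectively runs over $\{f>0\}$, where $f^{-1}$ and $\psi_b$ are $\lambda_d$-a.e. finite (the latter by (\ref{foncmax})).

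Next, $(\alpha_n)$ being $L^r(P)$-optimal with $P_a=f\cdot\lambda_d\not\equiv0$ and $\mathbb{E}|X|^{r+\eta}<\infty$, Proposition (\ref{propGLP}) applies: for every $b\in(0,1/2)$ and every $n\ge1$, $n^{s/d}d(y,\alpha_n)^s\le C(b)\,\psi_b(y)^{s/(d+r)}$ (relabelling the constant). Hence the integrand above is dominated, uniformly in $n$, by $C(b)\,\psi_b^{s/(d+r)}f_{\theta,\mu}\in L^1(\lambda_d)$ — which is precisely (\ref{foncmax}), recalling $P_{\theta,\mu}=f_{\theta,\mu}\cdot\lambda_d$. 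Reverse Fatou then gives
$$ \limsup_n n^{s/d}\Vert X-\widehat X^{\alpha_n^{\theta,\mu}}\Vert_s^s \le \theta^{s+d}\int_{\{f>0\}}\Big(\limsup_n n^{s/d}d(y,\alpha_n)^s\Big)\,f_{\theta,\mu}(y)\,d\lambda_d(y). $$

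It remains to show that $\limsup_n n^{s/d}d(y,\alpha_n)^s\le C(b)\,f(y)^{-s/(d+r)}$ for $\lambda_d$-a.e. $y$. For this I would use that (\ref{propGLP}) is obtained in \cite{GLP} from the sharper per-index inequality $n^{1/d}d(y,\alpha_n)\le C(b)\big(\lambda_d(B(y,\rho_n))/P(B(y,\rho_n))\big)^{1/(d+r)}$ with $\rho_n=b\,d(y,\alpha_n)$, whence $\limsup_n n^{1/d}d(y,\alpha_n)\le C(b)\big(\sup_{n\ge N}\lambda_d(B(y,\rho_n))/P(B(y,\rho_n))\big)^{1/(d+r)}$ for every $N$. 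Now $e_{n,r}(P)\to0$ forces $d(y,\alpha_n)\to0$ for every $y\in\mathrm{supp}(P)$ — otherwise a small ball about $y$ of positive $P$-mass would keep the error bounded away from $0$ — hence $\rho_n\to0$ $\lambda_d$-a.e. on $\{f>0\}$; and by the Lebesgue differentiation theorem $\lambda_d(B(y,\rho))/P(B(y,\rho))\to 1/f(y)$ as $\rho\to0$ at $\lambda_d$-a.e. point of $\{f>0\}$. Letting $N\to\infty$ yields the claimed pointwise estimate, and substituting it into the reverse Fatou inequality gives (\ref{limsup2}). Finiteness of the right-hand side follows since $\psi_b\ge 1/f$ $\lambda_d$-a.e. on $\{f>0\}$ (read off the same way from $\rho_n\to0$), so $\int f_{\theta,\mu}f^{-s/(d+r)}d\lambda_d\le\int\psi_b^{s/(d+r)}dP_{\theta,\mu}<\infty$.

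The routine parts are the change of variables and the reverse Fatou step (once the $L^1$-domination is recorded). The genuinely delicate point is the third paragraph: one must quote — or re-derive from the proof in \cite{GLP} — the per-index form of (\ref{propGLP}), since the statement as quoted, with $\psi_b(y)$ in place of the $n$-th ratio $\lambda_d(B(y,\rho_n))/P(B(y,\rho_n))$, only yields the weaker bound $\theta^{s+d}C(b)\int\psi_b^{s/(d+r)}dP_{\theta,\mu}$ and not the $f^{-s/(d+r)}$ appearing in (\ref{limsup2}); and one must check that the Lebesgue-density convergence is legitimate along the data-dependent radii $\rho_n$.
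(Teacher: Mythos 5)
Your proof is correct and follows essentially the same route as the paper: the change of variable $x=\mu+\theta(y-\mu)$, domination of $n^{s/d}d(\cdot,\alpha_n)^s$ by $C(b)\psi_b^{s/(d+r)}$ (valid $P_{\theta,\mu}$-a.s.\ because $P_{\theta,\mu}\ll P$), reverse Fatou under hypothesis (\ref{foncmax}), and the pointwise bound $\limsup_n n^{s/d}d(\cdot,\alpha_n)^s\le C(b)f^{-s/(d+r)}$. The only cosmetic difference is that you re-derive this last limsup bound and the inequality $\psi_b\ge 1/f$ from the per-index ratio plus Lebesgue differentiation along $\rho_n\to 0$, whereas the paper simply cites both, already packaged, from \cite{GLP}; your re-derivation is sound.
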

\vskip 0.4cm
Notice that  this theorem does not require that  $(\theta,\mu)$ is  $P$-admissible.

\begin{proof}[\textbf{Proof}] \ One deduces from differentiation of measures that 
$$ f^{-\frac{s}{d+r}} \ \leq \ \psi_b^{\frac{s}{d+r}} \qquad P_{\theta,\mu}\textrm{-a.s}. $$ 
Then, under Assumption $(\ref{foncmax})$,
$$ \int f^{-\frac{s}{d+r}} dP_{\theta,\mu} = \int f_{\theta,\mu}f^{-\frac{s}{d+r}} d\lambda_d < +\infty.$$
  For all $ n \geq 1$,
\begin{eqnarray*}
n^{s/d} \ \Vert X- \widehat{X}^{\alpha_n^{\theta,\mu}} \Vert_s^s 
& = & n^{s/d}  \int_{\mathbb{R}^d} d(z,\alpha_n^{\theta,\mu})^s f(z) d\lambda_d(z) \\ & = &
n^{s/d} \theta^s \int_{\mathbb{R}^d} \min_{a \in \alpha_n} \vert (z-\mu)/\theta+\mu -  a \vert^s f(z) d\lambda_d(z) 
\end{eqnarray*}
We make the change of variable $x := (z-\mu)/ \theta + \mu$. Then 
\begin{eqnarray*} 
n^{s/d} \ \Vert X- \widehat{X}^{\alpha_n^{\theta,\mu}} \Vert_s^s 
& = & n^{s/d} \theta^{s+d} \int_{\mathbb{R}^d} d(x,\alpha_n)^s f(\mu +\theta(x-\mu)) d\lambda_d(x) \\ & = &
n^{s/d}\theta^{s} \int_{\mathbb{R}^d} d(x,\alpha_n)^s dP_{\theta,\mu}(x).
\end{eqnarray*}
Besides, the following inequalities  are established in $\cite{GLP}$ :
\begin{eqnarray*} 
 \limsup_{n} n^{s/d} d(\cdot,\alpha_n)^s & \leq & C(b) f^{-\frac{s}{d+r}} \\
 \textrm{and } \hspace{1.0cm} n^{s/d}d(\cdot,\alpha_n)^s  & \leq & C(b) \psi_b^{\frac{s}{d+r}} \qquad P\textrm{-a.s} \ ( \textrm{hence } P_{\theta,\mu} \textrm{-a.s., since } P_{\theta,\mu} \ll P) .
\end{eqnarray*}
Under Assumption  $(\ref{foncmax})$  we can apply  the Lebesgues dominated convergence theorem  to the above inequalities, which yields 
\begin{eqnarray*}
 \limsup_{n} n^{s/d} \int d(x,\alpha_n)^s dP_{\theta,\mu}(x)
& \leq &  \int \limsup_{n} n^{s/d} d(x,\alpha_n)^s dP_{\theta,\mu}(x)  \\
& \leq & C(b) \int f^{-\frac{s}{d+r}} dP_{\theta,\mu}(x). \\
& = & \theta^{d} C(b) \int f_{\theta,\mu}(x)f^{-\frac{s}{d+r}}(x) d\lambda_d(x).
 \end{eqnarray*}
 \end{proof}

For a given distribution, Assumption $(\ref{foncmax})$ is not easy to verify. But when $s \not= r+d$, the lemma and corollaries below provide a sufficient condition so that Assumption  $(\ref{foncmax})$ is  satisfied. The next subject extends the results obtained in  $(\cite{GLP})$. For  further  details  we then  refer to  $(\cite{GLP})$.

Let $P=f \cdot \lambda_d$  be an absolutely continous distribution. Let $r,s \in (0, +\infty)$ and $(\theta,\mu)$ be a $P$-admissible couple of parameters. We will need the following hypotheses:  
\begin{itemize}
\item[$\textbf{(H1)}$]   for all $M>0$, 
\begin{equation}
 \sup_{z \in B(0,M)} \frac{f(\mu + \theta(z-\mu))}{f(z)} \mbox{\bf{1}}_{\{f(z)>0 \}} < +\infty .
 \end{equation}
\item[$\textbf{(H2)}$]   There exists  $b \in (0,1/2),\ M \in (0,+\infty)$ such that
\begin{equation}
 \int_{B(0,M)^{c}} \left( \sup_{t \leq 2b\vert x \vert} \frac{\lambda_d(B(x,t))}{P(B(x,t))} \right)^{s/(d+r)} dP_{\theta,\mu} < + \infty .
 \end{equation}
\item[$\textbf{(H3)}$]   $ \lambda_d(\cdot \cap \textrm{supp}(P)) \ll P $ and  $\textrm{supp}(P)$  is a finite union of closed convex sets.
\end{itemize}

\begin{lem} \label{lem3.2}
Let  $ P = f \cdot \lambda_d$ and $r>0$ such that $\int \vert x \vert^r P(dx) < +\infty$. Assume $(\alpha_n)_{n \geq 1}$ is a sequence of quantizers such that $\int d(x,\alpha_n)^r dP \rightarrow 0$.
 Let $(\theta,\mu)$ be a  $P$-admissible  couple of parameters  for which  $(\textbf{H1})$ holds.
\begin{itemize}
\item[$(a)$]  If $p \in (0,1)$  then  for every $ b>0, \ \psi_b^p \in L_{loc}^1(P_{\theta,\mu})$. 

\item[$(b)$] If $p \in (1,+\infty]$ and if furthermore $\textbf{(H3)}$ holds then for every $b >0$,   $$ f^{-p} \in L_{\textrm{loc}}^1(P) \Longrightarrow  \psi_b^p \in L_{\textrm{loc}}^1(P_{\theta,\mu}) .$$
\end{itemize}
\end{lem}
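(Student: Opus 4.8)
The strategy is to transfer everything to the measure $P$ by exploiting the $P$-admissibility of $(\theta,\mu)$, which guarantees (as in the commented-out proposition) that $P_{\theta,\mu}\ll P$ with a Radon–Nikodym density $g(z)=\theta^d\,f(\mu+\theta(z-\mu))f(z)^{-1}\mathbf{1}_{\{f(z)>0\}}$, and this density is \emph{locally bounded} by hypothesis $(\mathbf{H1})$. Once this is in hand, local integrability against $P_{\theta,\mu}$ will follow from local integrability against $P$ up to the harmless factor $\|g\|_{L^\infty(B(0,M))}$, so the heart of the matter is to bound $\psi_b^p$ locally in $L^1(P)$, which is precisely what Lemmas~1 and~2 of \cite{GLP} do.

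For part $(a)$, I would fix a ball $B(0,M)$ and write
$\int_{B(0,M)}\psi_b^p\,dP_{\theta,\mu}=\int_{B(0,M)}\psi_b^p\,g\,dP\le \|g\|_{L^\infty(B(0,M))}\int_{B(0,M)}\psi_b^p\,dP$,
so it suffices to check $\psi_b^p\in L^1_{\mathrm{loc}}(P)$ when $p\in(0,1)$. This is exactly the statement of Lemma~1 in \cite{GLP} (applied with the exponent $p$ in place of $s/(d+r)$): the maximal function $\psi_b$ is the supremum over $n$ of the ratios $\lambda_d(B(x,bd(x,\alpha_n)))/P(B(x,bd(x,\alpha_n)))$, and for $p<1$ a standard differentiation-of-measures / maximal-inequality argument—using only that $d(x,\alpha_n)\to 0$ in $L^r(P)$ so the balls shrink on average—gives local integrability of $\psi_b^p$. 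Since the hypotheses of that lemma (namely $P=f\cdot\lambda_d$, $\int|x|^r\,dP<\infty$, and $\int d(x,\alpha_n)^r\,dP\to 0$) are assumed here verbatim, part $(a)$ follows directly.

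For part $(b)$, with $p\in(1,+\infty]$, the same reduction to $P$ via the locally bounded density $g$ applies, so I would show $f^{-p}\in L^1_{\mathrm{loc}}(P)\Rightarrow\psi_b^p\in L^1_{\mathrm{loc}}(P)$; this is Lemma~2 of \cite{GLP}, whose proof uses the extra structural hypothesis $(\mathbf{H3})$—that $\lambda_d(\cdot\cap\mathrm{supp}(P))\ll P$ and $\mathrm{supp}(P)$ is a finite union of closed convex sets—to control $\psi_b$ near the support by the pointwise density $f^{-1}$ (raised to the power $p$, roughly $\psi_b\lesssim c\,f^{-1}$ locally in the convex pieces, up to boundary effects handled by convexity). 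Concretely: cover a given compact set by finitely many pieces on which $\mathrm{supp}(P)$ is convex, use the Besicovitch-type estimate from \cite{GLP} bounding $\psi_b(x)$ by a constant times $\sup_{t\le r_0}\lambda_d(B(x,t))/P(B(x,t))$, and then invoke $(\mathbf{H3})$ plus $f^{-p}\in L^1_{\mathrm{loc}}$ to conclude. Finally, multiply through by $\|g\|_{L^\infty}$ on the compact set to get $\psi_b^p\in L^1_{\mathrm{loc}}(P_{\theta,\mu})$.

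The main obstacle is part $(b)$: unlike part $(a)$, where sub-linearity of the exponent makes the maximal function automatically tame, here one genuinely needs the geometry of $\mathrm{supp}(P)$ to rule out that $\psi_b$ blows up faster than $f^{-1}$ near the boundary of the support, and this is where the convexity in $(\mathbf{H3})$ is essential. I would lean entirely on the corresponding argument in \cite{GLP}, checking only that the presence of the density $g$ (locally bounded by $(\mathbf{H1})$, so $P_{\theta,\mu}$ and $P$ have the same local null sets and the same local-integrability classes up to constants) does not disturb any step—which it does not, since $g$ enters purely multiplicatively on compact sets.
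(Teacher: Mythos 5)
Your proposal is correct and follows essentially the same route as the paper: write $P_{\theta,\mu}(dz)=g_\theta(z)\,P(dz)$ with $g_\theta(z)=\theta^d f(\mu+\theta(z-\mu))f(z)^{-1}\mathbf{1}_{\{f(z)>0\}}$ locally bounded by $(\mathbf{H1})$, reduce local $P_{\theta,\mu}$-integrability to local $P$-integrability, and then invoke Lemma~1 (for $p<1$) and Lemma~2 (for $p>1$, under $(\mathbf{H3})$ and $f^{-p}\in L^1_{\mathrm{loc}}(P)$) of \cite{GLP}. The extra commentary you add on the internal mechanism of the GLP lemmas is not needed, but it does not introduce any gap.
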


\begin{proof}[$\textbf{Proof}$]   It follows from the $P$-admissibility of $(\theta,\mu)$ that $$ P_{\theta,\mu}(dz) = \theta^d f(\mu + \theta(z-\mu)) \lambda_d(dz) = g_{\theta}(z) P(dz), $$ where $g_{\theta}(z)= \theta^d \ \frac{f(\mu + \theta(z-\mu))}{f(z)} \mbox{\bf{1}}_{\{f(z)>0 \}}$. Then $g_{\theta}$ is locally  bounded   by $ \ (\textbf{H1})$.

$(a)$ If $p \in (0,1)$, it  follows from Lemma 1 in  $\cite{GLP}$ that  $\psi_b^p \in L_{\textrm{loc}}^1(P)$. Hence $ \psi_b^p \in L_{\textrm{loc}}^1(P_{\theta,\mu})$  since $g_{\theta}$ is locally bounded. 

$(b)$ If $p \in (1,+\infty)$ it follows from  Lemma 2  in $\cite{GLP}$ that  if  $ f^{-p} \in L_{\textrm{loc}}^1 (P)$  then  $\psi_b^p \in L_{\textrm{loc}}^1(P_{\theta,\mu})$  since $g_{\theta}$ is locally  bounded. 

\end{proof}

\begin{cor}  \label{cor3.1} 
(Distributions with unbounded supports ) \ 
Let  $r>0, \ s \in (0, +\infty), \ s \not= r+d$  and let  $X$ be a random variable with probability measure  $P=f \cdot \lambda_d$  such that   $E \vert X \vert^{r+ \eta} < +\infty$ for some  $\eta >0$. Let  $(\theta,\mu)$  be  $P$-admissible and suppose that $\textbf{(H1)}, \ \textbf{(H2)}$ hold.  
\begin{itemize}
\item[$(a)$] If $s \in (0,r+d)$ then Assumption  $(\ref{foncmax})$ of Theorem $\ref{thm3}$  holds true.
\item[$(b)$] If  $s \in (r+d,+\infty)$,  and if furthermore,  $ \textbf{(H3)} $ holds and $f^{-\frac{s}{r+d}} \in L_{loc}^1(P)$ then Assumption  $(\ref{foncmax})$ of Theorem $\ref{thm3}$  holds true.
\end{itemize}
\end{cor}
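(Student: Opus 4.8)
The plan is to reduce Corollary \ref{cor3.1} to Theorem \ref{thm3} by showing that hypotheses \textbf{(H1)}, \textbf{(H2)} (and, in case $(b)$, \textbf{(H3)} together with $f^{-s/(r+d)}\in L^1_{\mathrm{loc}}(P)$) imply Assumption $(\ref{foncmax})$, namely $\psi_b^{s/(d+r)}\in L^1(P_{\theta,\mu})$ for some $b\in(0,1/2)$. Since $P_{\theta,\mu}=g_\theta\cdot P$ with $g_\theta$ locally bounded (by \textbf{(H1)}, exactly as in Lemma \ref{lem3.2}), and since $E|X|^{r+\eta}<\infty$ grants a moment bound, I would split $\mathbb{R}^d$ into a large ball $B(0,M)$ and its complement, and treat the two pieces separately. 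Set $p:=s/(d+r)$, which is in $(0,1)$ in case $(a)$ and in $(1,+\infty)$ in case $(b)$ — this is exactly why $s\ne r+d$ is required.

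For the piece inside $B(0,M)$: apply Lemma \ref{lem3.2}. In case $(a)$, $p\in(0,1)$ gives $\psi_b^p\in L^1_{\mathrm{loc}}(P_{\theta,\mu})$ directly from part $(a)$ of the lemma, so $\int_{B(0,M)}\psi_b^p\,dP_{\theta,\mu}<\infty$ for every $b>0$. In case $(b)$, $p\in(1,+\infty)$ and the extra assumptions $\textbf{(H3)}$ and $f^{-s/(r+d)}\in L^1_{\mathrm{loc}}(P)$ are precisely the hypotheses of part $(b)$ of the lemma, which again yields $\psi_b^p\in L^1_{\mathrm{loc}}(P_{\theta,\mu})$, hence integrability over the compact ball. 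For the piece outside $B(0,M)$: here I would invoke the standard far-field estimate from \cite{GLP} — for $x$ outside a large enough ball (with $M\ge |x_0|+\sup_n d(x_0,\alpha_n)$ for a fixed $x_0\in\mathrm{supp}(P)$, which is finite because $(\alpha_n)$ is rate-optimal and hence the quantizers stay in a fixed compact-ish region near the support), one has $\psi_b(x)\le \sup_{t\le 2b|x|}\lambda_d(B(x,t))/P(B(x,t))$. Raising to the power $p$ and integrating against $P_{\theta,\mu}$, the right-hand side is finite by \textbf{(H2)}. Adding the two pieces gives $\psi_b^{s/(d+r)}\in L^1(P_{\theta,\mu})$, which is Assumption $(\ref{foncmax})$, and Theorem \ref{thm3} then applies verbatim to conclude.

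The main obstacle — really the only subtle point — is the passage $\psi_b(x)\le\sup_{t\le 2b|x|}\lambda_d(B(x,t))/P(B(x,t))$ for $x$ far from the origin. One must check that for such $x$ and for every $n$, the ball $B(x,b\,d(x,\alpha_n))$ has radius at most $2b|x|$, which requires controlling $d(x,\alpha_n)$ from above by roughly $2|x|$ uniformly in $n$; this follows because a rate-optimal sequence $(\alpha_n)$ cannot wander off to infinity — each $\alpha_n$ contains a point within bounded distance of any fixed point of $\mathrm{supp}(P)$, so $d(x,\alpha_n)\le |x-x_0|+\sup_n d(x_0,\alpha_n)\le |x|+M'$ for a constant $M'$, whence $d(x,\alpha_n)\le 2|x|$ once $|x|\ge M'$. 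This is exactly the argument sketched in the commented-out proposition and in \cite{GLP}, so I would simply reproduce it with $P_{\theta,\mu}$ in place of $P$ in the integral, legitimate because $P_{\theta,\mu}\ll P$ (a consequence of $P$-admissibility). Everything else is bookkeeping: identifying $p=s/(d+r)$ with the exponent ranges, and citing the two lemmas of \cite{GLP} through our Lemma \ref{lem3.2}.
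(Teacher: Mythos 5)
Your proposal is correct and follows essentially the same route as the paper's proof: split $\int \psi_b^{s/(d+r)}\,dP_{\theta,\mu}$ over a large ball $B(0,M\vee N)$ and its complement, handle the compact part via Lemma~\ref{lem3.2} (case $(a)$ or $(b)$ according to whether $s/(d+r)$ is below or above $1$), and handle the tail via the pointwise bound $\psi_b(x)\le \sup_{t\le 2b|x|}\lambda_d(B(x,t))/P(B(x,t))$ valid once $|x|>|x_0|+\sup_n d(x_0,\alpha_n)$, together with $\textbf{(H2)}$. The one small imprecision is attributing $\sup_n d(x_0,\alpha_n)<\infty$ to ``rate-optimality''; the paper instead cites the fact (from \cite{DGLP}) that $d(x_0,\alpha_n)\to 0$ for $x_0\in\mathrm{supp}(P)$ when $(\alpha_n)$ is $L^r(P)$-optimal, but this does not affect the argument.
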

\vskip 0.4cm
\begin{proof}[\textbf{Proof}] \ Let $x_0 \in \textrm{supp}(P)$. We know from $\cite{DGLP}$ that $d(x_0,\alpha_n) \rightarrow 0$. Then following the lines of  the proof of Corollary 2 in $\cite{GLP}$ one has  for $\vert x \vert> N= \vert x_0 \vert + \sup_{n \geq 1} d(x_0,\alpha_n) $,  $\ d(x,\alpha_n) \leq 2 \vert x \vert $  for every $n \geq 1$.  Thus for every $b>0, \ x \in B(0,N)^c$,
\begin{equation*}  \label{ineq_lie_H2}
 \psi_b(x) \leq \sup_{t \leq 2b \vert x \vert} \frac{\lambda_d(B(x,t))}{P(B(x,t))}. 
\end{equation*} 
Now, coming back to the core of our  proof, it  follows from $\textbf{(H2)}$ that  (for $b$ coming from $\textbf{(H2)}$),  $$ \int_{B(0,M \vee N )^{c}} \psi_b^{s/(d+r)} dP_{\theta,\mu} < +\infty .$$
Since   $$ \int  \psi_b^{s/(d+r)} dP_{\theta,\mu} = \int_{B(0,M \vee N)} \psi_b^{s/(d+r)} dP_{\theta,\mu} + \int_{B(0,M \vee N)^{c}} \psi_b^{s/(d+r)} dP_{\theta,\mu}, $$ 
it remains to show that  the first term in the right hand side of this last  equality  is finite.

$(a)$ If $s \in (0,r+d)$ it follows from Lemma $\ref{lem3.2},\ (a)$ that the first term in the right hand side of the above equality is finite. As a consequence, $\psi^{\frac{s}{r+d}} \in L^1(P_{\theta,\mu})$. 

$(b)$ If $s > r+d$,  the first term in the right hand side of the  above equality  still  finite owing to Lemma $\ref{lem3.2}, \ (b)$. Consequently, Assumption  $(\ref{foncmax})$ of Theorem $\ref{thm3}$  holds true provided $\textbf{(H3)}$ holds and $f^{-\frac{s}{r+d}} \in L_{loc}^1(P)$. 
 
\end{proof}
We next give two  useful criterions  ensuring that  Hypothesis $\textbf{(H2)}$ holds.  The first one is useful for distributions with radial tails and the second one for distributions which does not  satisfy this last  assumption.

\begin{crit}  \ Let  $X$ be a random variable with probability measure   such that $ P = f \cdot \lambda_d$ and  $E | X |^{r+ \eta} < +\infty$ for some $\eta >0$.   \\
$(a)$ \  Let  $r,s>0$  and  $f = h(\vert  \cdot  \vert)$  on $B_{\vert \cdot \vert}(0,N)^{c}$ with $h : (R,+\infty) \rightarrow  \mathbb{R}_{+},\ R \in \mathbb{R}_{+}$, a decreasing function  and $\vert \cdot \vert$ any norm on $\mathbb{R}^{d}$. 
  Suppose  that $ (\theta,\mu) $ is  a couple of $P$-admissible  parameters such that 

\begin{equation}  \label{asscor1}
 \int f(cx)^{-\frac{s}{d+r}} dP_{\theta,\mu}(x)  < +\infty  
 \end{equation}
for some  $c>1$.  Then  $\textbf{(H2)}$ holds. \\
$(b)$ \ Let  $r,s>0 $.  If $ \textrm{supp}(P) \subset [R_0, +\infty)$  for some $ \ R_0 \in \mathbb{R}$ and $f_{|(R_0^{'}, +\infty)}$  decreasing  for $R_0^{'} \geq R_0$.  Assume furthermore that  $ (\theta,\mu) $ is  a couple of $P$-admissible  parameters such that  $(\ref{asscor1})$ is satisfied for some  $c>1$.  Then Hypothesis  $\textbf{(H2)}$ holds.
\end{crit}
Note that $(b)$ follows from $(a)$, for $d=1$,  and that  $(a)$ is simply deduced from the proof of  Corollary 3 in $\cite{GLP}$ since it has been  showed  that for $b \in (0,1/2), \ M:=N/(1-2b) $ one has for every $x \in B(0,M)^c$,
$$ \sup_{t \leq 2b \vert x \vert}  \frac{\lambda_d(B(x,t))}{P(B(x,t))} \leq  \frac{1}{f(x(1+2b))}.$$

\vskip 0.4cm
\begin{crit}  \ Let $r,s>0,\ P=f\cdot \lambda_d$ and  $\int \vert x \vert^{r+ \eta} P(dx) < +\infty$ for some $\eta >0$. Let   $(\theta,\mu)$ be a $P$-admissible  couple such that 
\begin{equation} \label{assert1}
 \sup_{z \not= 0 } \frac{f(\mu + \theta(z-\mu))}{f(z)} \mbox{\bf{1}}_{\{f(z)>0 \}} < +\infty . 
\end{equation} 
Assume furthermore that $$ \inf_{x \in \textrm{supp}(P), \rho>0} \frac{\lambda_d(\textrm{supp}(P) \cap B(x,\rho))}{\lambda_d(B(x,\rho))} >0$$
and that $f$ satisfies   the local growth control assumption : there exists real numbers $\varepsilon \geq 0, \ \eta \in (0,1/2), \ M, C >0 $ such that
$$ \forall x,y \in \textrm{supp}(P), \ \vert x \vert \geq M, \ \vert y-x \vert \leq 2 \eta \vert x \vert \  \Longrightarrow \ f(y) \geq C f(x)^{1+\varepsilon}. $$
If   
\begin{equation} \label{Hypoth2}
\int f(x)^{-\frac{s(1+\varepsilon)}{d+r}} dP(x) < +\infty, 
\end{equation}
  then $\textbf{(H2)}$ holds. If in particular $f$ satisfies the local growth control assumption for  $\varepsilon=0$ or for every $\varepsilon \in (0,\underline{\varepsilon}]$, with $\underline{\varepsilon}>0$, and if  
 $$\int f(x)^{-\frac{s}{d+r}} dP(x) = \int_{\{ f>0 \}} f(x)^{1-\frac{s}{d+r}} d\lambda_d(x) < +\infty$$
  then Hypothesis $\textbf{(H2)}$ holds.
\end{crit}
Notice that Hypothesis $(\ref{assert1})$ can be relaxed if we suppose that $f(x)^{-\frac{s(1+\varepsilon)}{d+r}}  \in L^1(P_{\theta,\mu})$ instead of $(\ref{Hypoth2})$. The criterion follows from Corollary 4 in $\cite{GLP}$.

\section{Toward a necessary and sufficient condition for $L^s(P)$-rate optimality when $s>r$}
Before dealing with examples, let us make some comments about  inequalities  $(\ref{liminf})$ and $(\ref{limsup2})$. Note first that the moment assumption  $ \ \mathbb{E} \vert X \vert^{r+\eta} < +\infty \textrm{ for some  } \eta >0$, ensure that  $\int_{\mathbb{R}^d} f^{\frac{d}{d+r}} d\lambda_d < + \infty$\ (cf \cite{GL}). Consequently, if  $\int f_{\theta,\mu} f^{-\frac{s}{d+r}} d\lambda_d = +\infty $ one derives from inequality $(\ref{liminf})$ that  $$\lim_{n} \Vert X - \widehat{X}^{\alpha_n^{\theta,\mu}} \Vert_s ^s = + \infty. $$ 
Then the sequence  $(\alpha_n^{\alpha,\mu})_{n \geq 1}$ is not  $L^s$-rate-optimal.

 On the other hand if $\int f_{\theta,\mu} f^{-\frac{s}{d+r}} d\lambda_d  < +\infty $ one derives from Inequality $(\ref{limsup2})$  that  $(\alpha_n^{\theta,\mu})_{n \geq 1}$ is  $L^s$-rate-optimal. For $s>r$, this leads to a necessary and sufficient condition  so that the sequence $ (\alpha_n^{\theta,\mu})_{n \geq 1}$  (in particular the sequence $(\alpha_n)_{n \geq 1}$ by taking $\theta=1$ and $\mu=0$) is  $L^s$-rate-optimal.
\vskip 0.4cm
\begin{rem}  \label{cor1}  Let $\mu \in \mathbb{R}^d $, $\theta,r >0,\ s>r$ and let $P$ be a probability distribution such that $P=f \cdot \lambda_d$. Assume $(\theta,\mu)$ is $P$-admissible.  Let  $(\alpha_n)_{n \geq 1}$ be an  $L^r(P)$-optimal sequence of $n$-quantizers  and suppose that Assumption  $(\ref{foncmax})$ of  Theorem $\ref{thm3}$ holds true. Then 
\begin{equation} \label{eqcor1}
(\alpha_n^{\theta,\mu})_{n \geq 1}  \textrm{ is  $L^s$-rate-optimal } \Longleftrightarrow  \int f_{\theta,\mu} f^{-\frac{s}{d+r}} d\lambda_d < +\infty .
\end{equation}
\end{rem}

\vskip 0.4cm
\begin{rem} \label{rem3.1} 
If $s<r$, the inequality $(\ref{limsup1})$ provides a sufficient condition so that  the sequence  $(\alpha_n^{\theta,\mu})_{n \geq 1}$ is   $L^s$-rate-optimal, which  is :  $ \int f_{\theta,\mu}^{\frac{r}{r-s}} f^{-\frac{s}{r-s}} d\lambda_d  < +\infty$ (always satisfied by $(\alpha_n)_{n \geq 1}$ itself).
\end{rem}

Now, for  $s \not= r$, is it possible to find a $ \ \theta = \theta^{\star}$ for  which the sequence $(\alpha_n^{\theta,\mu})_{n \geq 1}$ is  asymptotically $L^s(P)$-optimal? (when $s<r$ this is the only question of interest since we know that $(\alpha_n)_{n \geq 1}$ is $L^s(P)$-rate-optimal for every $s<r$).

For a fixed  $r,b$ and $\mu$, we can write from inequalities  $(\ref{limsup1})$ and  $(\ref{limsup2})$ :
\begin{equation}  \label{ineq_princ}
\limsup_n n^{s/d} \ \Vert X-\widehat{X}^{\alpha_n^{\theta,\mu}} \Vert_s^s \leq Q_{r,s}^{\textrm{Sup}}(P,\theta) 
\end{equation} 
with 
$$
Q_{r,s}^{\textrm{Sup}}(P,\theta)=\left \{\begin{array}{ll}
\theta^{s +d}\left(Q_r(P)\right)^{s/r} \left( \int_{f>0} f_{\theta,\mu}^{\frac{r}{r-s}} f^{-\frac{s}{r-s}} d\lambda_d \right)^{1-\frac{s}{r}} & \textrm{ if } s<r\\

\theta^{s+d} C(b) \int f_{\theta,\mu}f^{-\frac{s}{d+r}} d\lambda_d  & \textrm{ if } s>r.
  \end{array} \right.
$$
\vskip 0.4cm
 One knows that for a  given $s>0$, we have  for all  $  \  n \geq 1$, 
$$ e_{n,s}^s(X)  \leq   \Vert X - \widehat{X}^{\alpha_n^{\theta,\mu}} \Vert_s ^s. $$
Then for every $\theta>0$,    $$ Q_s(P)  \leq  Q_{r,s}^{\textrm{Sup}}(P,\theta). $$
 Consequently for a fixed  $s>0$,   in order to have the best  estimation of  Zador's constant in  $L^s$, we must minimize over $\theta $,   the quantity  $Q_{r,s}^{\textrm{Sup}}(P,\theta)$.  In that way, we may hope to reach the sharp rate of convergence in Zador Theorem and so construct an  asymptotically $L^s$-optimal sequence.

For $\mu$ well chosen, the examples below show that, for the  Gaussian and the  exponential distribution, the minimum $\theta^{\star}$ exists  and the sequence  $(\alpha_n^{\theta^{\star}, \mu})_{n \geq 1}$ satisfies the empirical measure theorem and is suspected to be  asymptotically $L^s$-optimal.

\section{Examples}
Let  $(\alpha_n)_{n\geq 1}$ be  an $L^r(P)$-optimal sequence of quantizers  for a given probability distribution $P$, and consider the sequence $(\alpha_n^{\theta,\mu})_{n \geq 1}$. For a fixed $\mu$ and $s$, we try  to solve the following minimization problem 

\begin{equation} \label{eqmin}
 \theta^{\star} = \arg\min_{\theta >0} \big \{ Q_{r,s}^{\textrm{Sup}}(P,\theta), \ (\alpha_n^{\theta,\mu})_{n \geq 1} \ L^s(P)\textrm{-rate-optimal} \big \}.
\end{equation}

In all  examples,   $C$  will  denote a generic real constant (not depending on $\theta$)  which may change from line to line.  The choice of  $\mu$  depends on the probability measure and it is not clear how to choose it.  In practice,  we shall set   $\mu = \mathbb{E}(X)$  when $X$  is  a symmetric random variable otherwise   we will usually set   $\mu = 0$. 

 \subsection{The multivariate Gaussian distribution}
 \subsubsection{Optimal dilatation and contraction}  
\begin{prop}
Let $r,s >0$ and let $P=\mathcal{N}(m;\Sigma), \ m \in \mathbb{R}^d, \Sigma \in \mathcal{S}^{+}(d,\mathbb{R})$.
\begin{enumerate}
\item[\bf{(a)}] If  $s \in (r,r+d) \cup (r+d,+\infty)$, the sequence  $(\alpha_n^{\theta,m})_{n \geq 1}$ is $L^s(P)$-rate-optimal iff $ \ \theta \in \big(\sqrt{s/(d+r)},+\infty \big)$  and  $$ \theta^{\star} = \sqrt{(s+d)/(r+d)} \ \in (1,+\infty) $$ is the unique solution of $(\ref{eqmin})$ on the set  $\big(\sqrt{s/(d+r)},+\infty \big).$
\item[\bf{(b)}] If $s \in (0,r)$, the sequence   $(\alpha_n^{\theta,m})_{n \geq 1}$ is $L^s(P)$-rate-optimal if  $ \ \theta \in \big(\sqrt{s/r},+\infty \big)$ \\  and $$ \theta^{\star} = \sqrt{(s+d)/(r+d)} \ \in (0,1) $$ is the unique solution of  $(\ref{eqmin})$ on the set  $\big(\sqrt{s/r},+\infty \big).$
\end{enumerate}
\end{prop}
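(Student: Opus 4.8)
The strategy is to specialize the general machinery of the earlier sections (Theorems \ref{thm1}, \ref{thm2}, \ref{thm3} and the criterions ensuring \textbf{(H2)}) to the Gaussian density and carry out the resulting one-dimensional optimization in $\theta$. Throughout I set $\mu = m$, which makes $f_{\theta,m}(x) = f(m + \theta(x-m))$ and, since $f$ is the $\mathcal{N}(m;\Sigma)$ density $f(x) = c_{d}(\det\Sigma)^{-1/2}\exp(-\tfrac12(x-m)'\Sigma^{-1}(x-m))$, a direct computation gives the key algebraic identity
\begin{equation*}
f_{\theta,m}(x)\, f(x)^{-\gamma} = c_d^{\,1-\gamma}(\det\Sigma)^{-(1-\gamma)/2}\exp\!\Big(-\tfrac12\big(\theta^{2}-\gamma\big)(x-m)'\Sigma^{-1}(x-m)\Big),
\end{equation*}
for any exponent $\gamma$. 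This is a (scaled) Gaussian integrand, integrable over $\mathbb{R}^d$ iff $\theta^{2} > \gamma$, in which case $\int f_{\theta,m} f^{-\gamma}\, d\lambda_d = C\,(\theta^{2}-\gamma)^{-d/2}$ for an explicit constant $C$ independent of $\theta$. Since $\{f>0\}=\mathbb{R}^d$, the $P$-admissibility of $(\theta,m)$ is automatic for every $\theta>0$.

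First I would treat part \textbf{(a)}, the case $s>r$. By Remark \ref{cor1} (equivalently, combining \eqref{liminf} and \eqref{limsup2}), once Assumption \eqref{foncmax} is verified, $L^s$-rate-optimality of $(\alpha_n^{\theta,m})$ is \emph{equivalent} to $\int f_{\theta,m} f^{-s/(d+r)}\, d\lambda_d < +\infty$; by the identity above with $\gamma = s/(d+r)$ this holds exactly when $\theta^{2} > s/(d+r)$, i.e. $\theta \in (\sqrt{s/(d+r)},+\infty)$. To invoke Remark \ref{cor1} I must check \eqref{foncmax}, which I would get from Corollary \ref{cor3.1}: \textbf{(H1)} holds since $\sup_{z\in B(0,M)} f_{\theta,m}(z)/f(z) < \infty$ (ratio of two positive continuous functions on a compact set); \textbf{(H2)} follows from Criterion (the radial-tails one), because the Gaussian density is radially decreasing outside a ball and $\int f(cx)^{-s/(d+r)} dP_{\theta,m}(x) = C\int f_{\theta,m}(x) f(cx)^{-s/(d+r)}\,d\lambda_d$ is again a Gaussian integral, finite once $\theta^{2} > c^{2}s/(d+r)$ which can be arranged by choosing $c>1$ close enough to $1$ whenever $\theta^2 > s/(d+r)$; and for the case $s>r+d$ one additionally needs \textbf{(H3)} (trivially true: $\mathrm{supp}(P)=\mathbb{R}^d$ is closed convex and $\lambda_d \ll P$) and $f^{-s/(r+d)}\in L^1_{\mathrm{loc}}(P)$ (trivial, $f$ continuous and positive). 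With rate-optimality settled on $(\sqrt{s/(d+r)},+\infty)$, it remains to minimize $Q_{r,s}^{\mathrm{Sup}}(P,\theta) = \theta^{s+d} C(b) \int f_{\theta,m} f^{-s/(d+r)}\, d\lambda_d = C\,\theta^{s+d}(\theta^2 - s/(d+r))^{-d/2}$ over that interval. Taking the logarithmic derivative, $\tfrac{d}{d\theta}\big[(s+d)\log\theta - \tfrac d2 \log(\theta^2 - s/(d+r))\big] = \tfrac{s+d}{\theta} - \tfrac{d\theta}{\theta^2 - s/(d+r)} = 0$ gives $(s+d)(\theta^2 - s/(d+r)) = d\theta^2$, hence $\theta^2 = (s+d)\,\tfrac{s/(d+r)}{s} = (s+d)/(d+r)$, i.e. $\theta^\star = \sqrt{(s+d)/(r+d)}$, which indeed lies in $(\sqrt{s/(d+r)},+\infty)$ since $(s+d)/(d+r) > s/(d+r)$, and is $>1$ since $s>r$. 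A second-derivative or sign-change check confirms it is the unique minimizer.

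For part \textbf{(b)}, the case $s<r$, I use Theorem \ref{thm2} and Remark \ref{rem3.1}: $L^s$-rate-optimality holds as soon as $\int_{\{f>0\}} f_{\theta,m}^{r/(r-s)} f^{-s/(r-s)}\, d\lambda_d < +\infty$. Now $f_{\theta,m}^{r/(r-s)} f^{-s/(r-s)}$ is, up to constants, $\exp(-\tfrac12\big(\tfrac{r\theta^2 - s}{r-s}\big)(x-m)'\Sigma^{-1}(x-m))$, integrable iff $\tfrac{r\theta^2 - s}{r-s} > 0$, i.e. (since $r>s$) iff $\theta^2 > s/r$, giving the stated range $\theta \in (\sqrt{s/r},+\infty)$; in that case the integral equals $C\,\big(\tfrac{r\theta^2-s}{r-s}\big)^{-d/2}$. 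Then $Q_{r,s}^{\mathrm{Sup}}(P,\theta) = \theta^{s+d}(Q_r(P))^{s/r}\big(C(\tfrac{r\theta^2-s}{r-s})^{-d/2}\big)^{1-s/r} = C\,\theta^{s+d}(r\theta^2-s)^{-d(r-s)/(2r)}$. Its logarithmic derivative is $\tfrac{s+d}{\theta} - \tfrac{d(r-s)}{r}\cdot\tfrac{r\theta}{r\theta^2-s} = \tfrac{s+d}{\theta} - \tfrac{d(r-s)\theta}{r\theta^2-s}$; setting it to zero yields $(s+d)(r\theta^2 - s) = d(r-s)\theta^2$, i.e. $\theta^2\big[(s+d)r - d(r-s)\big] = s(s+d)$, and $(s+d)r - d(r-s) = sr + ds = s(r+d)$, so $\theta^2 = (s+d)/(r+d)$ once more, giving $\theta^\star = \sqrt{(s+d)/(r+d)} \in (0,1)$ (as $s<r$), which lies in $(\sqrt{s/r},+\infty)$ because $(s+d)/(r+d) > s/r$ is equivalent to $r(s+d) > s(r+d)$, i.e. $rd > sd$, true. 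Uniqueness again follows since the logarithmic derivative changes sign exactly once. The main obstacle is not the optimization — which is elementary once the Gaussian integrals are computed — but the bookkeeping needed to legitimately invoke Theorem \ref{thm3} via Corollary \ref{cor3.1} and the radial-tails Criterion in part \textbf{(a)}: one must track that the free parameter $c>1$ in \eqref{asscor1} can be chosen compatibly with the constraint $\theta^2 > s/(d+r)$ on the whole claimed interval, and handle the dichotomy $s<r+d$ versus $s>r+d$ (the value $s=r+d$ being excluded, as flagged before the Lemma).
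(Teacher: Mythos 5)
Your proof is correct and follows essentially the same route as the paper's: set $\mu=m$, observe that $P$-admissibility is automatic and \textbf{(H1)} holds by continuity, verify \eqref{foncmax} via Corollary \ref{cor3.1} and the radial-tails criterion for $s>r$ (splitting $s\lessgtr r+d$), compute the relevant Gaussian integrals to identify the finiteness threshold for $\theta$, and then minimize $\theta^{s+d}(\theta^2-s/(d+r))^{-d/2}$ (resp. $\theta^{s+d}(\theta^2-s/r)^{-d(r-s)/(2r)}$) over the admissible range. The only cosmetic difference is your use of the logarithmic derivative in place of the paper's direct differentiation of $h$; the computations and conclusions match.
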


\begin{proof}[\textbf{Proof}]
Since the multivariate Gaussian distribution is symmetric, one sets $\mu = m$. Keep in mind that the probability  density function $f$ of  $P$  is given for every $x \in \mathbb{R}^d$ by,
$$ f(x)= \big( (2\pi)^d \textrm{det}\ \Sigma \big)^{-\frac{1}{2}} e^{-\frac{1}{2} (x-m)'\Sigma^{-1}(x-m)}. $$
Note first that Hypothesis $\textbf{(H1)}$ is obviously satisfied from the continuity of $\frac{f(m+\theta(z-m))}{f(z)} \mbox{\bf{1}}_{\{f(z)>0 \}} $ on every $\bar{B}(0,M),\ M>0$.

 $\mbox{\bf{(a)}} \ $ Let $s \in (r,d+r) $. For every $\theta>0, \mu \in \mathbb{R}^d$, the couple $(\theta,\mu)$ is $P$-admissible ($f>0$) and  $f$ is radial since $f(x) = \varphi(\vert x-m \vert_{\Sigma}) \textrm{ with } \varphi : (0,+\infty) \longmapsto \mathbb{R}_{+} $ defined by $$\varphi( \xi ) = \big((2\pi)^{d} \textrm{det} \Sigma \big)^{-1/2} \exp (-\frac{1}{2} \vert \xi \vert^2 ), \  \textrm{ with }\vert x \vert_{\Sigma} = \vert \Sigma^{-\frac{1}{2}} x \vert. $$ 
 Let $\theta> \sqrt{s/(r+d)}$. Then  Assumption $(\ref{asscor1})$ holds for every $c \in (1, \theta  \sqrt{\frac{r+d}{s}})$. Consequently,  it follows from   Corollary $\ref{cor3.1}, (a)$  that Assumption  $(\ref{foncmax})$ of Theorem $\ref{thm3}$ holds. 

If $s>d+r$,  the required additional hypotheses $\textbf{(H3)}$ and $f^{-\frac{s}{r+d}} \in L_{loc}^{1}(P)$ are clearly satisfied since $P=f \cdot \lambda_d$ (and $f^{-1}$ is continuous ensuring that $ \lambda_d(\cdot \cap \textrm{supp}(P)) \ll P$) and $f^{-\frac{s}{r+d}} $ is continuous on every $\bar{B}(0,M),\ M>0$. Then it follows from  Corollary $\ref{cor3.1},  (b)$ that Assumption  $(\ref{foncmax})$ of Theorem $\ref{thm3}$ holds.


In the other hand
\begin{eqnarray*}
 \int_{\mathbb{R}^d} f_{\theta,m}(x)f(x)^{-\frac{s}{d+r}} dx
 & = & \int_{\mathbb{R}^d} f(m + \theta(x-m))f(x)^{-\frac{s}{d+r}} dx \\
 & = & C \ \int_{\mathbb{R}^d}  e^{-\frac{1}{2} ( \theta^2 - \frac{s}{d+r} )(x-m)'\Sigma^{-1}(x-m)} dx  
\end{eqnarray*}
 so that  $$ \int_{\mathbb{R}^d} f_{\theta,m}(x)f(x)^{-\frac{s}{d+r}} < +\infty \quad \textrm{ iff } \quad  \theta> \sqrt{\frac{s}{d+r}}.$$

Now we are in position to solve the problem $(\ref{eqmin})$. Let  $\theta \in \big(\sqrt{s/(d+r)},+\infty \big)$,

\begin{eqnarray*}
\theta^{s+d} \int_{\mathbb{R}^d} f_{\theta,m}(x)f(x)^{-\frac{s}{d+r}} dx
& =  & \big((2 \pi)^d det\ \Sigma \big)^{-\frac{1}{2}(1-\frac{s}{d+r})} \ \theta^{s+d} \int_{\mathbb{R}^d}  e^{-\frac{1}{2} ( \theta^2 - \frac{s}{d+r} )(x-m)'\Sigma^{-1}(x-m)} dx \\
& =  &  \big((2 \pi)^d det\ \Sigma \big)^{-\frac{s}{d+r}}\ \theta^{s+d} \left( \theta^2 - \frac{s}{d+r} \right)^{-\frac{d}{2}}. 
\end{eqnarray*}
 For  $ \theta \in  \big(\sqrt{s/(d+r)},+\infty \big)$, we want to  minimize the function  $h$ defined by 
 $$ h(\theta)= \theta^{s+d} \left( \theta^2 - \frac{s}{d+r} \right)^{-\frac{d}{2}}. $$
 The function  $h$ is differentiable  on $\big(\sqrt{s/(d+r)}, + \infty \big)$ with derivative    
$$ h'(\theta) = s \theta^{d+s-1}\left( \theta^2 - \frac{s}{d+r} \right)^{-1-d/2} \left( \theta^2 - \frac{s+d}{r+d} \right).$$
One easily checks that $h$ reaches its unique minimum on  $\big(\sqrt{s/(d+r)},+\infty \big) $ at $ \theta^{\star} = \sqrt{(s+d)/(r+d)}.$

$\mbox{\bf{(b)}}$ Let $s<r$ and consider the  inequality  $(\ref{limsup1})$. We get
\begin{equation*}
 \int f_{\theta,m}^{\frac{r}{r-s}}(x) f^{-\frac{s}{r-s}}(x) dx =  C \ \int_{\mathbb{R}^d}  e^{-\frac{1}{2} \frac{r}{r-s} ( \theta^2 - \frac{s}{r} ) (x-m)'\Sigma^{-1}(x-m)} dx. 
 \end{equation*}
So if  $ \ \theta \in \big(\sqrt{s/r},+\infty \big) \ \textrm{ then } \int f_{\theta,m}^{\frac{r}{r-s}}(x) f^{-\frac{s}{r-s}}(x) dx < +\infty. $ This proves the first assertion.

To prove the second assertion, let  $ \ \theta \in \big(\sqrt{s/r},+\infty \big) $. Then
\begin{eqnarray*}
\theta ^{d+s} \left( \int f_{\theta,m}^{\frac{r}{r-s}}(x) f^{-\frac{s}{r-s}}(x) dx \right)^{1-\frac{s}{r}}  
& = & C \ \theta^{s+d} \left(\int_{\mathbb{R}^d}  e^{-\frac{1}{2} \frac{r}{r-s} ( \theta^2 - \frac{s}{r} ) (x-m)'\Sigma^{-1}(x-m)} dx \right)^{1-\frac{s}{r}}\\
& =&  C \ \theta^{s+d} \left( \theta^2 - \frac{s}{r} \right)^{-\frac{d}{2r}(r-s)}.
\end{eqnarray*}
We  proceede as before  setting  $$ h(\theta) = \theta^{\alpha} \left( \theta^2 - \frac{s}{r} \right)^{\beta}   , \textrm{ with } \alpha = d+s \textrm{ and } \beta = -\frac{d}{2r}(r-s). $$
For all  $ \ \theta \in \big(\sqrt{s/r},+\infty \big) $, $$ h'(\theta) =  \theta^{\alpha -1 } \left( \theta^2 - \frac{s}{r} \right)^{\beta -1} \left( (\alpha +2\beta) \theta^2 - \frac{\alpha s}{r} \right). $$
The sign of  $h'$ depends on the  sign of  $\left( (\alpha +2\beta) \theta^2 - \frac{\alpha s}{r} \right)$.  Moreover  $\alpha +2 \beta = \frac{s}{r}(d+r) >0 $ then  $h'$ vanishes at $\theta^{\star} = \sqrt{(s+d)/(r+d)}$, is negative on the set $\big(\sqrt{s/r},\theta^{\star}\big)$ and positive on $\big(\theta^{\star},+\infty \big)$. Therefore $h$ reaches its  minimum on $\big(\sqrt{s/r},+\infty \big) $ at the  unique point   $\theta^{\star}.$

\end{proof}

\begin{rem} Let $X \sim \mathcal{N}(m;\Sigma).$ \\
 If $s<r,\textrm{ then } \ \theta^{\star} < 1$. Hence, $\ (\alpha_n^{\theta^{\star},m})_{n \geq 1}$ is a contraction of  $(\alpha_n)_{n \geq 1}$ with scaling number  $\theta^{\star}$ and translating number $m$. 
In the  other hand, if $s > r, \textrm{ then } \ \theta^{\star} > 1$. In this case the sequence  $\ (\alpha_n^{\theta^{\star},m})_{n \geq 1}$ is a dilatation of $(\alpha_n)_{n \geq 1}$ with scaling number  $\theta^{\star}$ and translating number $m$. Also note  that $\theta^{\star}$ does not  depend on the covariance matrix $ \Sigma $.
\end{rem}

What we do  expect from the  resulting sequence $(\alpha_n^{\theta^{\star},m})_{n \geq 1}$ ? The proposition below shows that it satisfies the empirical measure theorem (keep in mind that this theorem is satisfied by asymptotically optimal quantizers although the converse is not true in general).
\vskip 0.4cm
\begin{prop} \label{prop_empthm1}
 Let $r,s>0$ and let $ P = \mathcal{N}(m;\Sigma)$. Assume  $(\alpha_{n})_{n \geq 1} $ is asymptotically  $L^r(P)$-optimal. Then the sequence  $ (\alpha_{n}^{\theta^{\star,m}})_{n \geq 1} $ (as defined before with $\theta^{\star} = \sqrt{(s+d)/(r+d)}$)  satisfies the empirical measure theorem. 

In other words, for every  $a,b \   \in \mathbb{R}^d$, 
 $$  \frac{1}{n} \textrm{card}(\{x \in \alpha_{n}^{\theta^{\star, m}}\cap [a,b]  \}) \longrightarrow \frac{1}{C_{f,s}} \int_{[a,b]} f(x)^{\frac{d}{d+s}} dx.$$
\end{prop}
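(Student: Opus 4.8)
The plan is to obtain the empirical measure theorem for $(\alpha_n^{\theta^\star,m})_{n\geq 1}$ from the one for the original sequence, simply by pushing the limiting measure forward through the dilatation map. Since $P=\mathcal{N}(m;\Sigma)$ is absolutely continuous and has finite moments of all orders, the empirical measure theorem applies to the asymptotically $L^r(P)$-optimal sequence $(\alpha_n)_{n\geq 1}$ and yields
\[
\frac1n\sum_{a\in\alpha_n}\delta_a\ \stackrel{w}{\longrightarrow}\ P_r,\qquad P_r(dx)=\frac{1}{C_{f,r}}\,f(x)^{\frac{d}{d+r}}\,d\lambda_d(x).
\]

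First I would observe that, with $T(x)=m+\theta^\star(x-m)$, one has $\alpha_n^{\theta^\star,m}=T(\alpha_n)$, so the empirical measure of $\alpha_n^{\theta^\star,m}$ is the image of that of $\alpha_n$ under $T$, i.e. $\frac1n\sum_{a\in\alpha_n^{\theta^\star,m}}\delta_a=T_*\big(\frac1n\sum_{a\in\alpha_n}\delta_a\big)$. Because $T$ is a homeomorphism of $\mathbb{R}^d$, for every bounded continuous $g$ the function $g\circ T$ is again bounded continuous, hence the weak convergence above is preserved: $\frac1n\sum_{a\in\alpha_n^{\theta^\star,m}}\delta_a\stackrel{w}{\longrightarrow}T_*P_r$. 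It then remains to identify $T_*P_r$ with the measure $\nu_s(dx)=C_{f,s}^{-1}f(x)^{\frac{d}{d+s}}d\lambda_d(x)$. A change of variables shows that $T_*P_r$ has $\lambda_d$-density $y\mapsto (\theta^\star)^{-d}C_{f,r}^{-1}f\big(m+(y-m)/\theta^\star\big)^{\frac{d}{d+r}}$; inserting the Gaussian density $f(x)=\big((2\pi)^d\det\Sigma\big)^{-1/2}\exp\!\big(-\tfrac12(x-m)'\Sigma^{-1}(x-m)\big)$ turns this into a constant times $\exp\!\big(-\tfrac{d}{2(d+r)(\theta^\star)^2}(y-m)'\Sigma^{-1}(y-m)\big)$, and since $(\theta^\star)^2=(d+s)/(d+r)$ the exponent equals $-\tfrac{d}{2(d+s)}(y-m)'\Sigma^{-1}(y-m)$, so the density is proportional to $f(y)^{\frac{d}{d+s}}$. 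As $T_*P_r$ is a probability measure, the proportionality constant is forced to be $C_{f,s}^{-1}$, whence $T_*P_r=\nu_s$.

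This is exactly the assertion that $(\alpha_n^{\theta^\star,m})_{n\geq 1}$ satisfies the empirical measure theorem; and since $\nu_s\ll\lambda_d$, the boundary of any rectangle $[a,b]$ is $\nu_s$-negligible, so the weak convergence gives $\frac1n\,\textrm{card}\big(\alpha_n^{\theta^\star,m}\cap[a,b]\big)\to C_{f,s}^{-1}\int_{[a,b]}f(x)^{\frac{d}{d+s}}\,d\lambda_d(x)$, which is the stated limit. I do not expect a serious obstacle: the entire content is the change-of-variables identification of $T_*P_r$, whose crux is the elementary algebraic fact that multiplying the quadratic form by $1/(\theta^\star)^2$ brings the exponent $\frac{d}{d+r}$ down to exactly $\frac{d}{d+s}$. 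The one point deserving care is that the empirical measure theorem is invoked for the original sequence $(\alpha_n)$, whose asymptotic $L^r$-optimality is the sole hypothesis, and not for the dilated sequence itself.
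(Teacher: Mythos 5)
Your argument is correct and follows essentially the same route as the paper: both apply the empirical measure theorem to the original sequence $(\alpha_n)$ and then perform the change of variables $x=m+\theta^\star(z-m)$, with the key algebraic step being that $(\theta^\star)^2=(d+s)/(d+r)$ converts the exponent $\tfrac{d}{d+r}$ into $\tfrac{d}{d+s}$, and identify the normalizing constant as $C_{f,s}^{-1}$ since the resulting measure is a probability measure. Your pushforward phrasing is a slightly more abstract packaging of the paper's direct computation on rectangles, and your explicit remark that the limit measure assigns zero mass to the boundary of $[a,b]$ (needed to pass from weak convergence to convergence on rectangles) is a small point of rigor the paper leaves implicit.
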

\vskip 0.4cm
\begin{proof}[\textbf{Proof}] \  For all $n \geq 1$,
$$\{x \in \alpha_{n}^{\theta^{\star, m}}\cap [a,b]  \} = \{ x \in \alpha_n \cap [(a-m)/ \theta^{\star} + \mu,(b-m)/ \theta^{\star}]+m  \}. $$
Since  $(\alpha_n)_{n \geq 1}$ is asymptotically  $L^r$-optimal; by  applying the empirical measure theorem to the sequence $(\alpha_n)_{n \geq 1}$, we obtain:
$$ \frac{1}{n}\textrm{card}(\{x \in \alpha_n \cap [(a-m)/ \theta^{\star} + m,(b-m)/ \theta^{\star}+m] \}) \longrightarrow \frac{1}{C_{f,r}} \int_{[(a-m)/ \theta^{\star} + m,(b-m)/ \theta^{\star}+m]} f(x)^{\frac{d}{d+r}} dx.  $$
It remains to verify that $$ \frac{1}{C_{f,r}} \int_{[(a-m)/ \theta^{\star} + m,(b-m)/ \theta^{\star}+m]} f(x)^{\frac{d}{d+r}} dx = \frac{1}{C_{f,s}} \int_{[a,b]} f(x)^{\frac{d}{d+s}} dx.$$ 
Remind that  $$f(x) = \big( (2\pi)^d \textrm{det} \ \Sigma \big)^{-\frac{1}{2}} e^{-\frac{1}{2}(x-m)'\Sigma^{-1}(x-m)}$$  and $ see (\ref{eqmesemp}) \big) $
$$ C_{f,r} = \int_{\mathbb{R}^d} f(x)^{\frac{d}{d+r}} dx.$$
 Hence, for all $r > 0$, 
 $$ C_{f,r} = \big((2 \pi )^d  \textrm{det}\ \Sigma \big)^{\frac{r}{2(r+d)}}\left( \frac{d+r}{d} \right)^{\frac{d}{2}}.$$
 By making the change of variable $x=m +\theta^{\star}(z-m)$, one gets :   
\begin{eqnarray*} 
\frac{1}{C_{f,r}} \int_{[(a-m)/ \theta^{\star} + m,(b-m)/ \theta^{\star}+m]} f(z)^{\frac{d}{d+r}} dz
&  =  & \frac{1}{C_{f,r}} (\theta^{\star})^{-d} \int_{[a,b]} f((x-m)/ \theta^{\star}+m)^{ \frac{d}{d+r}}  dx .
\end{eqnarray*}
It is easy to check that  
$$ \big(f((x-m)/ \theta^{\star}+m\big)^{ \frac{d}{d+r}} = \big(f(x) \big)^{\frac{d}{d+s}} \big( (2\pi)^d \textrm{det}\ \Sigma \big)^{-\frac{1}{2} (\frac{d}{d+r} - \frac{d}{d+s})}  $$
and that  $$ \frac{1}{C_{f,r}} (\theta^{\star})^{-d} \big( (2\pi)^d \textrm{det}\Sigma \big)^{-\frac{1}{2} (\frac{d}{d+r} - \frac{d}{d+s})} =  \big( (2 \pi)^d  \textrm{det}\  \Sigma \big)^{-\frac{s}{2(s+d)}}\left(\frac{d+s}{d} \right)^{-\frac{d}{2}}.$$
The last term is simply equal to  $\frac{1}{C_{f,s}}$. We then deduce that 
$$ \frac{1}{C_{f,r}} \int_{[(a-m)/ \theta^{\star} + m,(b-m)/ \theta^{\star}+m]} f(x)^{\frac{d}{d+r}} dx = \frac{1}{C_{f,s}} \int_{[a,b]} f(x)^{\frac{d}{d+s}} dx. $$
\end{proof}

 We have just built a sequence $(\alpha_n^{\theta^{\star},m})_{n \geq 1}$ verifying the empirical measure theorem. The question we ask know is : is this sequence asymptotically $L^s$-optimal? The next proposition shows that the lower bound in $(\ref{liminf})$ is in fact reached by considering the sequence $(\alpha_n^{\theta^{\star},m})_{n \geq 1}$. 

\begin{prop}
 Let $s>0$ and let  $\theta = \theta^{\star} = \sqrt{(s+d)/(r+d)}$. Then, the constant in the asymptotic lower bound for the $L^s$ error induced by the sequence  $(\alpha_n^{\theta^{\star},m})_{ n \geq 1}$ (see $(\ref{liminf})$) satisfies :
\begin{equation} \label{eqconj1}
Q_{r,s}^{Inf}(P,\theta^{\star}) = Q_s(P).
\end{equation} 
\end{prop}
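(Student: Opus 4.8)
The goal is to show that when $\theta^{\star} = \sqrt{(s+d)/(r+d)}$ the lower-bound constant $Q_{r,s}^{\textrm{Inf}}(P,\theta^{\star})$ from Theorem~\ref{thm1} coincides with the true Zador constant $Q_s(P)$. First I would write out both constants explicitly for the Gaussian density $f(x)= ((2\pi)^d\det\Sigma)^{-1/2} e^{-\frac12 (x-m)'\Sigma^{-1}(x-m)}$. Recall from Zador's Theorem that $Q_s(P) = J_{s,d}\,\big(\int_{\mathbb{R}^d} f^{d/(d+s)}\,d\lambda_d\big)^{(d+s)/d} = J_{s,d}\, C_{f,s}^{(d+s)/d}$, and that the analogous integral for the exponent $r$ appears inside $Q_{r,s}^{\textrm{Inf}}(P,\theta) = \theta^{s+d} J_{s,d}\, C_{f,r}^{s/d} \int_{\{f>0\}} f_{\theta,m} f^{-s/(d+r)}\,d\lambda_d$. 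So the whole computation reduces to evaluating two Gaussian integrals: $C_{f,r}$ (already computed in the proof of Proposition~\ref{prop_empthm1} as $((2\pi)^d\det\Sigma)^{r/(2(r+d))}\big((d+r)/d\big)^{d/2}$, and similarly $C_{f,s}$), and the overlap integral $\int_{\mathbb{R}^d} f_{\theta,m}\, f^{-s/(d+r)}\,d\lambda_d$.

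Second, I would compute that overlap integral. Since $f_{\theta,m}(x) = f(m+\theta(x-m))$ and both factors are Gaussian in the shifted variable, the integrand is $C\cdot \exp\big(-\tfrac12(\theta^2 - \tfrac{s}{d+r})(x-m)'\Sigma^{-1}(x-m)\big)$ where the prefactor $C$ collects the normalizing constants raised to the appropriate powers; this is exactly the computation already carried out in part (a) of the proof of the preceding Proposition, giving
\[
\theta^{s+d}\int_{\mathbb{R}^d} f_{\theta,m} f^{-s/(d+r)}\,d\lambda_d = \big((2\pi)^d\det\Sigma\big)^{-s/(d+r)}\,\theta^{s+d}\Big(\theta^2 - \tfrac{s}{d+r}\Big)^{-d/2}.
\]
Now substitute $\theta = \theta^{\star}$, i.e. $(\theta^{\star})^2 = (s+d)/(r+d)$, so that $(\theta^{\star})^2 - \tfrac{s}{d+r} = \tfrac{s+d}{r+d} - \tfrac{s}{r+d} = \tfrac{d}{r+d}$. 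Then $\big((\theta^{\star})^2 - \tfrac{s}{d+r}\big)^{-d/2} = \big(\tfrac{r+d}{d}\big)^{d/2}$ and $(\theta^{\star})^{s+d} = \big(\tfrac{s+d}{r+d}\big)^{(s+d)/2}$.

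Third, I would assemble everything. Multiply by $J_{s,d}\, C_{f,r}^{s/d}$ to form $Q_{r,s}^{\textrm{Inf}}(P,\theta^{\star})$, insert the closed form of $C_{f,r}$, and simplify the powers of $(2\pi)^d\det\Sigma$ and of $(d+r)/d$, $(d+s)/d$; the powers of $d+r$ should cancel completely, leaving only an expression in $d+s$, $\det\Sigma$ and $J_{s,d}$. On the other side, write $Q_s(P) = J_{s,d}\, C_{f,s}^{(d+s)/d}$ with $C_{f,s} = ((2\pi)^d\det\Sigma)^{s/(2(s+d))}\big((d+s)/d\big)^{d/2}$, so $C_{f,s}^{(d+s)/d} = ((2\pi)^d\det\Sigma)^{s/(2d)}\big((d+s)/d\big)^{(d+s)/2}$. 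Matching the two expressions term by term (the $\det\Sigma$ exponents, the $((d+s)/d)$ exponents, and the bare $J_{s,d}$) gives the claimed equality $Q_{r,s}^{\textrm{Inf}}(P,\theta^{\star}) = Q_s(P)$.

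The bookkeeping of the exponents is the only place where an error can creep in, so the main obstacle is purely the careful tracking of the powers of $(2\pi)^d\det\Sigma$ coming from $f^{-s/(d+r)}$ versus $f_{\theta,m}$, and the reconciliation of the $((d+r)/d)^{d/2}$ factor inside $C_{f,r}^{s/d}$ with the $((r+d)/d)^{d/2}$ factor produced by $\theta^{\star}$; there is no conceptual difficulty, it is a verification that the specific value $\theta^{\star}$ is precisely the one that makes these cancellations happen. One subtlety worth a remark: since $J_{s,d}$ is generally not known explicitly, the equality is an exact identity between the two constants rather than an explicit numerical value, and it shows that the asymptotic lower bound of Theorem~\ref{thm1} is attained by $(\alpha_n^{\theta^{\star},m})_{n\ge1}$ — which, combined with the matching upper bound of Theorem~\ref{thm3} at $\theta=\theta^{\star}$, would establish asymptotic $L^s$-optimality up to the value of the constant $C(b)$.
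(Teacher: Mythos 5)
Your proposal follows the paper's proof exactly: compute $C_{f,r}$ and $\int f_{\theta,m}f^{-s/(d+r)}\,d\lambda_d$ for the Gaussian density in closed form, substitute $(\theta^{\star})^2=(s+d)/(r+d)$ so that $(\theta^{\star})^2 - s/(d+r) = d/(d+r)$, and verify the exponents recombine into $Q_s(P)=J_{s,d}\big((d+s)/d\big)^{(d+s)/2}\big((2\pi)^d\det\Sigma\big)^{s/(2d)}$. One bookkeeping warning: the prefactor $\big((2\pi)^d\det\Sigma\big)^{-s/(d+r)}$ you transcribe reproduces a misprint in the paper's earlier display; a direct computation gives $\int f_{\theta,m}f^{-s/(d+r)}\,d\lambda_d=\big((2\pi)^d\det\Sigma\big)^{s/(2(d+r))}\big(\theta^2-\tfrac{s}{d+r}\big)^{-d/2}$, and it is this exponent $s/(2(d+r))$ that combines with the $rs/(2d(d+r))$ coming from $C_{f,r}^{s/d}$ to give the required $s/(2d)$, whereas the exponent $-s/(d+r)$ as written would not close.
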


\begin{proof}[\textbf{Proof}] \ Keep in mind that if $P \sim \mathcal{N}(m;\Sigma)$ then, for all $r>0$, 
$$ \big(Q_r(P) \big)^{1/r} = \big(J_{r,d} \big)^{1/r} \sqrt{ 2 \pi} \left( \frac{d+r}{d} \right)^{\frac{d+r}{2r}} \big(\textrm{det}\ \Sigma \big)^{\frac{1}{2d}}.  $$ 
We have in one hand 
  \begin{eqnarray*}
 \left( \int_{\mathbb{R}^d} f^{\frac{d}{d+r}}(x) d(x)   \right)^{s/d} 
& = &  \left(\big( (2\pi)^d \textrm{det}\ \Sigma \big)^{-\frac{1}{2} \frac{d}{d+r}} \int_{\mathbb{R}^d} e^{-\frac{1}{2} \frac{d}{d+r}(x-m)'\Sigma^{-1}(x-m)} dx \right)^{s/d} \\
& = & \left(\big( (2\pi)^d \textrm{det}\ \Sigma \big)^{\frac{1}{2} \frac{r}{d+r}} \big( \frac{d+r}{d} \big)^{\frac{d}{2}} \right)^{s/d} 
 \end{eqnarray*}
and in the other hand
\begin{eqnarray*}
\int_{\mathbb{R}^d} f_{\theta^{\star}, \mu}(x) f^{- \frac{s}{d+r}}(x) d(x)
& = & \big( (2\pi)^d \textrm{det}\ \Sigma \big)^{-\frac{1}{2}-\frac{s}{d+r}} \int_{\mathbb{R}^d} e^{-\frac{1}{2} \frac{d}{d+r}(x-m)'\Sigma^{-1}(x-m)} dx \\
& = & \big( (2\pi)^d \textrm{det}\ \Sigma \big)^{-\frac{s}{d+r}} \big( \frac{d+r}{d}\big)^{\frac{d}{2}}.
\end{eqnarray*}
Combining these two results yields

\begin{eqnarray*}
Q_{r,s}^{\textrm{Inf}}(P,\theta^{\star})
& = & (\theta^{\star})^{s+d}  J_{s,d} \left(  \int_{\mathbb{R}^d} f^{\frac{d}{d+r}} d\lambda_d   \right)^{s/d} \int_{\mathbb{R}^d} f_{\theta^{\star}, \mu} f^{- \frac{s}{d+r}} d\lambda_d \\ 
& = & J_{s,d} \left( \frac{s+d}{r+d} \right)^{\frac{d+s}{2}}  \big( (2\pi)^d \textrm{det}\ \Sigma \big)^{\frac{s}{2d}} \left( \frac{r+d}{d}\right)^{\frac{d+s}{2}} \\
& = & J_{s,d} \left( \frac{s+d}{d} \right)^{\frac{d+s}{2}}  \big( (2\pi)^d \textrm{det}\ \Sigma \big)^{\frac{s}{2d}} \\
& = & Q_s(P).
\end{eqnarray*}

\end{proof}

After some elementary calculations, it follows from the proposition above and inequalities $(\ref{liminf})$,$(\ref{ineq_princ})$, the corollary below :
\begin{cor} Let $X \sim \mathcal{N}(m;\Sigma)\ $ and $\ \theta^{\star} = \sqrt{(s+d)/(r+d)}$. Then,
\begin{equation}
 Q_s(P)^{1/s} \leq \liminf_{n \rightarrow \infty} n^{1/d} \ \Vert X - \widehat{X}^{\alpha_n^{\theta^{\star},m}} \Vert_s  \leq \limsup_{n \rightarrow \infty} n^{1/d} \ \Vert X - \widehat{X}^{\alpha_n^{\theta^{\star},m}} \Vert_s \leq Q_{r,s}^{\textrm{Sup}}(P,\theta^{\star})^{1/s}
\end{equation}
with 
$$
Q_{r,s}^{\textrm{Sup}}(P,\theta^{\star})^{1/s}=\left \{\begin{array}{ll}
  \big(\frac{s+d}{d} \big)^{\frac{s+d}{2s}} J_{r,d}^{\frac{1}{r}} \ \big((2 \pi)^d \textrm{det }\Sigma \big)^{\frac{1}{2d}}  & \textrm{ if } s<r\\

 \big( \frac{s+d}{d} \big)^{\frac{d}{2}}  \sqrt{\frac{s+d}{r+d}} \ C(b) \ \big((2 \pi)^d \textrm{det }\Sigma \big)^{\frac{1}{2(d+r)}}  & \textrm{ if } s>r.
  \end{array} \right.
$$
\end{cor}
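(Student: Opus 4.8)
The plan is to get the two outer inequalities from results already proved and then compute the extreme constants explicitly for the Gaussian density; the chain $\liminf\le\limsup$ in the middle is automatic.

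First I would handle the lower bound. Applying Theorem~\ref{thm1} with $\theta=\theta^{\star}$ and $\mu=m$ gives $\liminf_n n^{s/d}\,\Vert X-\widehat X^{\alpha_n^{\theta^{\star},m}}\Vert_s^s\ge Q_{r,s}^{\textrm{Inf}}(P,\theta^{\star})$, and the preceding proposition, equation~(\ref{eqconj1}), says $Q_{r,s}^{\textrm{Inf}}(P,\theta^{\star})=Q_s(P)$; taking $s$-th roots yields $Q_s(P)^{1/s}\le\liminf_n n^{1/d}\Vert X-\widehat X^{\alpha_n^{\theta^{\star},m}}\Vert_s$. Alternatively this is just Zador's theorem, $n^{1/d}e_{n,s}(X)\to Q_s(P)^{1/s}$, combined with the trivial bound $e_{n,s}(X)\le\Vert X-\widehat X^{\alpha_n^{\theta^{\star},m}}\Vert_s$.

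Next the upper bound. I would quote~(\ref{ineq_princ}), i.e.\ $\limsup_n n^{s/d}\Vert X-\widehat X^{\alpha_n^{\theta,\mu}}\Vert_s^s\le Q_{r,s}^{\textrm{Sup}}(P,\theta)$, at $\theta=\theta^{\star}$, $\mu=m$. Its standing hypotheses have essentially been checked already in the proof of the Gaussian proposition: $(\theta^{\star},m)$ is $P$-admissible since $f>0$; for $s<r$ one needs $\int_{\{f>0\}}f_{\theta^{\star},m}^{r/(r-s)}f^{-s/(r-s)}\,d\lambda_d<\infty$, which holds because $\theta^{\star 2}=(s+d)/(r+d)>s/r$; for $s>r$ one needs the maximal-function integrability~(\ref{foncmax}), which was obtained there from Corollary~\ref{cor3.1} once $\theta^{\star 2}=(s+d)/(r+d)>s/(d+r)$ (plus the extra convexity and local-integrability requirements of part~(b) when $s>r+d$, trivially met for a Gaussian). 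Thus~(\ref{ineq_princ}) applies in both regimes and it only remains to evaluate $Q_{r,s}^{\textrm{Sup}}(P,\theta^{\star})$.

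Finally the explicit constants. The required integrals are of the type already computed in the earlier proofs: $\int f_{\theta,m}^{r/(r-s)}f^{-s/(r-s)}\,d\lambda_d=\big((r\theta^2-s)/(r-s)\big)^{-d/2}$ for $s<r$, and $\int f_{\theta,m}f^{-s/(d+r)}\,d\lambda_d=\big((2\pi)^d\det\Sigma\big)^{s/(2(d+r))}\big(\theta^2-s/(d+r)\big)^{-d/2}$ for $s>r$. Putting $\theta^{\star 2}=(s+d)/(r+d)$ makes $r\theta^{\star 2}-s=d(r-s)/(r+d)$ and $\theta^{\star 2}-s/(d+r)=d/(d+r)$, so the argument of each power collapses to $(r+d)/d$; combining with $\theta^{\star\,s+d}=\big((s+d)/(r+d)\big)^{(s+d)/2}$ and, when $s<r$, with the closed form $\big(Q_r(P)\big)^{1/r}=J_{r,d}^{1/r}\sqrt{2\pi}\big((d+r)/d\big)^{(d+r)/(2r)}(\det\Sigma)^{1/(2d)}$, the powers of $(r+d)/d$ telescope and one gets $Q_{r,s}^{\textrm{Sup}}(P,\theta^{\star})=\big((s+d)/d\big)^{(s+d)/2}J_{r,d}^{s/r}\big((2\pi)^d\det\Sigma\big)^{s/(2d)}$ for $s<r$ and $Q_{r,s}^{\textrm{Sup}}(P,\theta^{\star})=C(b)\,\big((s+d)/d\big)^{d/2}\big((s+d)/(r+d)\big)^{s/2}\big((2\pi)^d\det\Sigma\big)^{s/(2(d+r))}$ for $s>r$; the $s$-th roots are the two displayed formulas. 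The only delicate point in all this is the exponent bookkeeping in this last step — keeping track of the powers of $(r+d)/d$, $(s+d)/d$ and of $(2\pi)^d\det\Sigma$ — together with the (already settled) verification that $\theta^{\star}=\sqrt{(s+d)/(r+d)}$ lies strictly inside the admissible intervals $(\sqrt{s/(d+r)},\infty)$ and $(\sqrt{s/r},\infty)$, so that no boundary case of Theorems~\ref{thm2} and~\ref{thm3} is ever touched; there is no genuine analytic obstacle.
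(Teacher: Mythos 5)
Your proof follows the paper's approach exactly: combine Theorem~\ref{thm1} with the identity $Q_{r,s}^{\textrm{Inf}}(P,\theta^{\star})=Q_s(P)$ from the preceding proposition for the lower bound, invoke~(\ref{ineq_princ}) (i.e.\ Theorems~\ref{thm2} and~\ref{thm3}, whose hypotheses were already checked in the Gaussian proposition) for the upper bound, and evaluate the Gaussian integrals at $\theta^{\star}$. Your integral computations are correct, including $\int f_{\theta,m}^{r/(r-s)}f^{-s/(r-s)}\,d\lambda_d=\big((r\theta^2-s)/(r-s)\big)^{-d/2}$ and $\int f_{\theta,m}f^{-s/(d+r)}\,d\lambda_d=\big((2\pi)^d\det\Sigma\big)^{s/(2(d+r))}\big(\theta^2-s/(d+r)\big)^{-d/2}$, and the exponent bookkeeping leading to $Q_{r,s}^{\textrm{Sup}}(P,\theta^{\star})=C(b)\big((s+d)/d\big)^{d/2}\big((s+d)/(r+d)\big)^{s/2}\big((2\pi)^d\det\Sigma\big)^{s/(2(d+r))}$ for $s>r$ is right. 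However, your closing remark that ``the $s$-th roots are the two displayed formulas'' is not accurate for $s>r$: the $s$-th root of your expression is $C(b)^{1/s}\big((s+d)/d\big)^{d/(2s)}\sqrt{(s+d)/(r+d)}\,\big((2\pi)^d\det\Sigma\big)^{1/(2(d+r))}$, whereas the corollary prints $C(b)$ and $\big((s+d)/d\big)^{d/2}$ without the $1/s$ exponents. The displayed formula in the paper appears to be a typographical slip (compare the exponential analogue two subsections later, where the $C(b)^{1/s}$ is present); you should have flagged the discrepancy rather than asserting agreement.
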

%
\vskip 0.4cm
\begin{rem} \label{rem_gauss}
$(a) \ $  If $s>r$, we cannot  prove the asymptotically $L^s(P)$-optimality of $(\alpha_n^{\theta^{\star},m})_{n\geq 1}$   using $(\ref{limsup2})$ since the constant $C(b)$ is not explicit.

$(b)$ When $s<r$, the corollary above shows that the upper bound  in $(\ref{limsup1})$  does not reach the Zador constant. Then our upper  estimate does not allow us to show that the sequence $(\alpha_n^{\theta^{\star},m})_{ n \geq 1}$ is asymptotically $L^s(P)$-optimal.
 \end{rem}
Moreover, using $H\ddot{o}lder$ inequality (with $p=r/(r-s)$ and $q =r/s $), we have for every $\theta>0$, 
\begin{eqnarray*}
\int_{\mathbb{R}^d}  f_{\theta, \mu}(x) f^{- \frac{s}{d+r}}(x) d\lambda_d(x) & = &  \int_{\mathbb{R}^d} f_{\theta, \mu}(x) f^{-s/r}(x) f^{\frac{sd}{r(d+r)}}(x) d\lambda_d(x) \\
& \leq & \left( \int_{\mathbb{R}^d} f_{\theta,\mu}^{\frac{r}{r-s}}(x) f^{-\frac{s}{r-s}}(x) d\lambda_d(x) \right)^{\frac{r-s}{r}}  \left( \int_{\mathbb{R}^d} f^{\frac{d}{d+r}}(x) d\lambda_d \right)^{\frac{s}{r}} .
\end{eqnarray*}
and (for $\theta= \theta^{\star}$)
\begin{equation} \label{eqconj2}
 \int_{\mathbb{R}^d}  f_{\theta^{\star}, \mu}(x) f^{- \frac{s}{d+r}}(x) d\lambda_d(x) = \left( \int_{\mathbb{R}^d} f_{\theta^{\star},\mu}^{\frac{r}{r-s}}(x) f^{-\frac{s}{r-s}}(x) d\lambda_d(x) \right)^{\frac{r-s}{r}}  \left( \int_{\mathbb{R}^d} f^{\frac{d}{d+r}}(x) d\lambda_d \right)^{\frac{s}{r}}.
\end{equation}
 Hence,  according to $ (\ref{eqconj1})$, one gets  for every $s<r$, 
\begin{equation} \label{eq_notZc} 
(\theta^{\star})^{s+d}  J_{s,d} \left( \int_{\mathbb{R}^d} f_{\theta^{\star},\mu}^{\frac{r}{r-s}}(x) f^{-\frac{s}{r-s}}(x) d\lambda_d(x) \right)^{\frac{r-s}{r}} \Vert f \Vert_{\frac{d}{d+r}}^{s/r}  =Q_s(P).
\end{equation}
Thus, to reach the Zador constant in $(\ref{limsup1})$ we must rather   have  $J_{s,d}$  instead of $J_{r,d}$ (which will be coherent  since  for all $s<r, \ J_{s,d}^{1/s} \leq  J_{r,d}^{1/r}$), that  is,
$$ \limsup_{n \rightarrow \infty} n^{1/d} \ \Vert X - \widehat{X}^{\alpha_n^{\theta,\mu}} \Vert_s \leq  \theta^{s+d}  J_{s,d} \left( \int_{\mathbb{R}^d} f_{\theta,\mu}^{\frac{r}{r-s}}(x) f^{-\frac{s}{r-s}}(x) d\lambda_d(x) \right)^{\frac{r-s}{r}} \Vert f \Vert_{\frac{d}{d+r}}^{s/r}. $$
.

\subsubsection{Numerical experiments}
For numerical example, supppose  that  $d=1$ and  $ r \in \{1,2,4 \}$. Let  $X\sim \mathcal{N}(0,1)$ and, for a fixed $n$, let $ \alpha_{n,r} =\{x_{1,r}, \cdots, x_{n,r} \}$ be the $n$-$L^r$-optimal grid for $X$ (obtained by a Newton-Raphson zero search). For every  $ n \in \{20,50,\cdots,900  \} $  and for   $\ (s,r ) =(1,2) $ and $(4,2)$,  we  make a linear regression of $\alpha_{n,r} $ onto $\alpha_{n,s} $ :

 $$ x_{i,s} \simeq \hat{a}_{sr} x_{i,r} + \hat{b}_{sr},\quad i=1,\cdots,n .$$
 Table  $\ref{tab1}$ provides the  regression coefficients  we obtain for different values of $n$. We note that when $ n $ increases, the coefficients $\hat{a}_{sr}$ tend to the value $\sqrt{(s+1)/(r+1)} = \theta^{\star}$ whereas the coefficients $\hat{b}_{sr}$ almost vanish. For example, for  $n=900$ and for $ (r,s)=(2,1)$ (resp. $(2,4)$) we get $\hat{a}_{sr}= 0.8170251$ (resp. $ 1.2900417 $). The expected  values are $\sqrt{2/3} = 0.8164966 $ (resp. $\sqrt{5/3} = 1.2909944 $). The absolute errors are then ${5.285}\times 10^{-4} $ $\big(\textrm{resp.}\  9.527 \times 10^{-4} \big) $. We remark that the  error mainly comes from the  tail  of the distribution.
\vskip 0.4cm
\begin{table}[htbp]
 \begin{center}
  \begin{tabular}{|*{8}{c|}}
  \hline
 $ n $ & $\hat{a}_{12}$ & $ \hat{b}_{12} $ & $\epsilon $ & & $\hat{a}_{42}$ & $ \hat{b}_{42} $ & $\epsilon $  \\
  \hline  
20 & 0.8250096 & 1.826E-14 & 0.0003025 & &  1.2761027 &  - 3.650E-12 &  0.0008607 \\
  \hline
50 & 0.8211387  & - 1.021E-13 & 0.0006870 & &  1.2828110   & 3.733E-10   &  0.0020110 \\
  \hline
100 &  0.8193424 & 8.693E-14 & 0.0009909 & & 1.2859567  & 4.059E-09 & 0.0029445\\
  \hline
300 &  0.8177506  & - 1.045E-11 & 0.0013601 & & 1.2887640   & 0.0000004   &  0.0041021\\
  \hline
700 &  0.8171428  & - 7.219E-11  & 0.0015111 &  & 1.2898393   &  - 0.0000089   & 0.0048006  \\
  \hline
800 &  0.8170775   & - 6.725E-11 &  0.0015247 &   &1.2900041   &  0.0000216    &  0.0040577 \\
  \hline
900 &  0.8170251  & 4.564E-11 & 0.0015346 &  & 1.2900417  &  - 0.0000141   & 0.0048182  \\
 \hline
\end{tabular}
\caption{\small{Regression coefficients for the Gaussian.  \label{tab1}}}
\end{center}
\end{table}

The  previous numerical results, in addition to Equation $(\ref{eqconj1})$, strongly suggest  that the sequence $(\alpha_n^{\theta^{\star},m})_{n \geq 1}$ is  in fact  asymptotically $L^s(P)$-optimal. This leads to the   following conjecture. 
\begin{conj}  Let $P \sim \mathcal{N}(m;\Sigma)$ and let  $(\alpha_n)_{n \geq 1}$ be an $L^r(P)$-optimal sequence of quantizers.  Then, for every   $s>0$, the sequence  $(\alpha_n^{\theta^{\star},m})_{ n \geq 1}$ (with $\ \theta^{\star} = \sqrt{(s+d)/(r+d)}$)  is asymptotically  $L^s(P)$-optimal.
\end{conj}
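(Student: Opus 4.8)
The plan is to combine the lower bound already available with a matching upper bound. Applying Theorem~\ref{thm1} with $\theta=\theta^{\star}$ and $\mu=m$, and invoking the identity $Q^{\mathrm{Inf}}_{r,s}(P,\theta^{\star})=Q_s(P)$ established just above (equation \eqref{eqconj1}), one already has $\liminf_n n^{s/d}\,\Vert X-\widehat X^{\alpha_n^{\theta^{\star},m}}\Vert_s^s\ge Q_s(P)$, so the problem reduces to
\[
\limsup_{n\to+\infty} n^{s/d}\,\Vert X-\widehat X^{\alpha_n^{\theta^{\star},m}}\Vert_s^s\ \le\ Q_s(P).
\]
By the change of variable $x:=(z-m)/\theta^{\star}+m$ used in the proof of Theorem~\ref{thm3}, this is exactly the assertion that the $L^r(P)$-optimal sequence $(\alpha_n)$ is \emph{asymptotically $L^s$-optimal for the dilated law $P_{\theta^{\star},m}$}; and $\theta^{\star}=\sqrt{(d+s)/(d+r)}$ is precisely the value for which this is conceivable, since (as in the proof of Proposition~\ref{prop_empthm1}) the asymptotic point density of $(\alpha_n)$, proportional to $f^{d/(d+r)}$, coincides with the $L^s$-optimal density $\propto f_{\theta^{\star},m}^{\,d/(d+s)}$ of $P_{\theta^{\star},m}$.

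To obtain the upper bound I would run a micro--macro (localization) argument. Fix $\varepsilon>0$, a large ball $B(0,R)$, and a covering of $B(0,R)$ by small cubes $Q_i$ of side $\delta$ on each of which $f$ oscillates by a factor at most $1+\varepsilon$. The tail $B(0,R)^c$ is treated exactly as in the proof of Theorem~\ref{thm3} (the pointwise domination $n^{s/d}d(\cdot,\alpha_n)^s\le C(b)\psi_b^{s/(d+r)}$ holds for every $s>0$, and Assumption \eqref{foncmax} holds for $\mathcal N(m;\Sigma)$ by Corollary~\ref{cor3.1}): its contribution to the $\limsup$ is at most $C(b)\int_{B(0,R)^c}f^{-s/(d+r)}\,dP_{\theta^{\star},m}$, which tends to $0$ as $R\to+\infty$ since $\int f^{-s/(d+r)}\,dP_{\theta^{\star},m}$ (equivalently, up to a constant, $\int f_{\theta^{\star},m}f^{-s/(d+r)}\,d\lambda_d$) is finite for the Gaussian. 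On each cube the empirical measure theorem \eqref{empthm} gives $n_i:=\mathrm{card}(\alpha_n\cap Q_i)\sim n\,P_r(Q_i)$, and one bounds $\int_{Q_i}d(x,\alpha_n)^s f\,d\lambda_d\le(1+\varepsilon)\,f(x_i)\,\lambda_d(Q_i)\,\Vert U(Q_i)-\widehat{U(Q_i)}^{\,\alpha_n\cap\widetilde Q_i}\Vert_s^s$, with $\widetilde Q_i$ a slight enlargement of $Q_i$; thus everything comes down to the local $L^s$-behaviour of the piece of $(\alpha_n)$ lying near $Q_i$.

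Summing the local estimates and running the computation that establishes \eqref{eqconj1}, but with $J_{s,d}$ replaced by the $L^s$-quantization coefficient $j^{(r)}_s$ of the local configurations of an $L^r$-optimal quantizer, one arrives at
\[
\limsup_{n\to+\infty} n^{s/d}\,\Vert X-\widehat X^{\alpha_n^{\theta^{\star},m}}\Vert_s^s\ \le\ \varepsilon+\frac{j^{(r)}_s}{J_{s,d}}\,Q_s(P)+o_{\delta}(1),
\]
so that, after letting $\delta\downarrow0$, $R\uparrow+\infty$, $\varepsilon\downarrow0$, the conjecture follows exactly when $j^{(r)}_s=J_{s,d}$, i.e. when the local configurations of an $L^r$-optimal quantizer are also $L^s$-optimal. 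This equality is the main obstacle and is the genuine reason the statement is left as a conjecture.

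When $d=1$ the equality $j^{(r)}_s=J_{s,1}$ is automatic and the scheme becomes a complete proof: by \eqref{propGLP} the Voronoi cells of $\alpha_n$ shrink uniformly on compacts, so on each $Q_i$ the density is nearly constant on every cell; the $L^r$-stationarity equations ($a$ is the $L^r$-centroid of $X$ on its Voronoi cell, and consecutive cell endpoints are midpoints) then force two consecutive gaps to differ by a relative amount $O(1/n)$ — because the $L^r$-centroid of a nearly uniform interval of length $\ell$ lies within $O(\ell^2)$ of its midpoint — and chaining over the $\sim n_i$ gaps in $Q_i$ shows the local configuration is equispaced up to a factor $1+O(\delta)$, hence $L^s$-optimal for $U(Q_i)$ for every $s$; this matches the experiments of Table~\ref{tab1}. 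For $d\ge2$, however, $j^{(r)}_s=J_{s,d}$ asserts (and in fact presupposes) the existence of one lattice that is $L^{\rho}$-optimal simultaneously for $\rho=r$ and $\rho=s$, an instance of Gersho's conjecture, which is open; equivalently, the missing ingredient is the sharpening of Theorems~\ref{thm2} and~\ref{thm3} in which $(Q_r(P))^{s/r}$ and $C(b)$ are replaced by the optimal constant built from $J_{s,d}$ — the bound displayed just after \eqref{eq_notZc} — after which \eqref{liminf} together with \eqref{eqconj1} closes the estimate. (The borderline value $s=r+d$, excluded above, can be recovered by a limiting argument in $s$.)
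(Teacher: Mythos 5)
The statement you were given is a \emph{conjecture} in the paper, not a theorem: the author proves only the lower-bound identity $Q_{r,s}^{\mathrm{Inf}}(P,\theta^{\star})=Q_s(P)$ (equation \eqref{eqconj1}) and the empirical-measure compatibility (Proposition \ref{prop_empthm1}), presents numerical evidence (Table~\ref{tab1}), and states explicitly in Remark~\ref{rem_gauss} that the available upper bounds \eqref{limsup1} and \eqref{limsup2} do not reach the Zador constant. Your proposal accurately reproduces this diagnosis: the remaining obstacle is that the constant in the upper estimate must be improved so that $J_{s,d}$ appears (i.e.\ the bound displayed after \eqref{eq_notZc}), which requires knowing that the local configuration of an $L^r$-optimal quantizer is itself asymptotically $L^s$-optimal --- for $d\ge 2$ an instance of Gersho's conjecture. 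On this front your text matches the paper and adds a correct account of why the statement is left open.

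Where you overreach is the assertion that the scheme ``becomes a complete proof'' when $d=1$. The chaining argument from the $L^r$-stationarity equations to local equispacing is a reasonable heuristic but is not a proof as written: (i) you need a uniform-in-$n$ quantitative version of the claim that the $L^r$-centroid of a nearly uniform cell of length $\ell$ lies within $O(\ell^2)$ of its midpoint (the constant depends on the local oscillation of $f'/f$ and on $r$), and you need to prove, not assert, that the resulting multiplicative error over $n_i \sim n P_r(Q_i)$ gaps telescopes to a factor $1+O(\delta)$ uniformly in $n$; (ii) passing from ``the points of $\alpha_n$ in $Q_i$ are equispaced up to $1+O(\delta)$'' to ``the local $L^s$-distortion equals the optimal one up to $1+O(\delta)$'' uses a stability lemma for the $L^s$-error of near-uniform grids that you invoke implicitly but do not supply; (iii) the Gaussian has unbounded support, so the tail cut-off and the ``firewall'' treatment of boundary effects between the cubes $Q_i$ must both be made uniform over the partition as $\delta\downarrow 0$ and $R\uparrow\infty$; and (iv) the parenthetical claim that the excluded value $s=r+d$ is ``recovered by a limiting argument in $s$'' is not automatic, since the constants in \eqref{propGLP} and \eqref{limsup2} have no established continuity in $s$. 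The honest conclusion is that you have correctly reduced the conjecture to a precise open sub-question and sketched a plausible route --- including a credible one-dimensional strategy --- but you have not closed the argument even for $d=1$, and the one-dimensional claim should itself be labelled a conjecture with a strategy attached.
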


\subsection{Exponential distribution } 
\subsubsection{Optimal dilatation and contraction}  
\begin{prop} \label{prop_exp}
 
Let $r,s >0$ and  $X$ be an exponentially distributed random variable with rate parameter $\lambda >0$. Set $\mu = 0.$ 
\begin{enumerate}
\item[\bf{(a)}] If $s \in \big(r,r+1 \big) \cup \big(r+1,+\infty \big) $, the sequence  $( \alpha_n^{\theta,0})_{n \geq 1}$ is $L^s$-rate-optimal iff   $ \ \theta \in \big(s/(r+1),+\infty \big)$ and $$ \theta^{\star} = (s+1)/(r+1) $$ is the unique solution of $(\ref{eqmin})$ on the set  $\big(s/(r+1),+\infty \big).$
\item[\bf{(b)}] If $s \in (0,r) $, the sequence  $( \alpha_n^{\theta,0})_{n \geq 1}$ is $L^s$-rate-optimal for all $ \ \theta \in \big(s/r,+\infty \big)$   and  $$ \theta^{\star} = (s+1)/(r+1)$$ is the unique  solution of $(\ref{eqmin})$ on $\big(s/r,+\infty \big).$
\end{enumerate}
\end{prop}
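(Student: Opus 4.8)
The plan is to mirror the argument used for the Gaussian law: reduce the $L^s$-rate-optimality of $(\alpha_n^{\theta,0})_{n\geq 1}$ to the finiteness of one (or two) explicit one-dimensional integrals, and then minimise the resulting upper bound $Q_{r,s}^{\textrm{Sup}}(P,\theta)$ over the admissible range of $\theta$. First I would fix the data: here $d=1$, $f(x)=\mathbf{1}_{\{x>0\}}\lambda e^{-\lambda x}$, $\mu=0$, and $\alpha_n^{\theta,0}=\theta\alpha_n$. The couple $(\theta,0)$ is $P$-admissible for every $\theta>0$ because $\{f>0\}=(0,+\infty)=\theta\,(0,+\infty)$, and the $L^r(P)$-optimal $n$-quantizer exists and is unique (explicitly given in the Proposition recalled from \cite{FP}). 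Since $f_{\theta,0}(x)=\mathbf{1}_{\{x>0\}}\lambda e^{-\lambda\theta x}$, the two integrals governing the estimates of Section~2 and Section~3 are
\begin{align*}
\int_{\{f>0\}} f_{\theta,0}\,f^{-\frac{s}{1+r}}\,dx &=\lambda^{-\frac{s}{1+r}}\int_{0}^{+\infty}e^{-\lambda\bigl(\theta-\frac{s}{1+r}\bigr)x}\,dx, \\
\int_{\{f>0\}} f_{\theta,0}^{\frac{r}{r-s}}f^{-\frac{s}{r-s}}\,dx &=\lambda\int_{0}^{+\infty}e^{-\lambda\frac{\theta r-s}{r-s}x}\,dx,
\end{align*}
the first being finite iff $\theta>\frac{s}{r+1}$, the second (meaningful when $s<r$) finite iff $\theta>\frac{s}{r}$.

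For part \textbf{(a)} ($s>r$, $s\neq r+1$) I would first check that Assumption $(\ref{foncmax})$ of Theorem~\ref{thm3} holds for every $\theta>\frac{s}{r+1}$, by applying Corollary~\ref{cor3.1}. Its hypotheses are verified directly: \textbf{(H1)} holds since $f(\theta z)/f(z)=e^{-\lambda(\theta-1)z}$ is bounded on each $B(0,M)$; \textbf{(H2)} follows from the monotone-tail Criterion~3.1$(b)$, because $\textrm{supp}(P)=[0,+\infty)$ and $f$ is decreasing there, and $(\ref{asscor1})$ holds for any $c\in\bigl(1,\theta(r+1)/s\bigr)$, a nonempty interval exactly when $\theta>\frac{s}{r+1}$; and if $s>r+1$ one moreover has \textbf{(H3)} ($\textrm{supp}(P)$ is closed convex and $f>0$ on its interior, so $\lambda_d(\cdot\cap\textrm{supp}(P))\ll P$) together with $f^{-s/(r+1)}\in L^1_{\textrm{loc}}(P)$ by continuity. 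With $(\ref{foncmax})$ available and $s>r$, Remark~\ref{cor1} shows that $(\alpha_n^{\theta,0})_{n\geq1}$ is $L^s$-rate-optimal iff $\int f_{\theta,0}f^{-s/(1+r)}\,dx<+\infty$, i.e. iff $\theta>\frac{s}{r+1}$; for $\theta\leq\frac{s}{r+1}$ this integral diverges, and Theorem~\ref{thm1} then forces $n^{s/d}\|X-\widehat{X}^{\alpha_n^{\theta,0}}\|_s^s\to+\infty$, so the sequence is not rate-optimal. Finally I would minimise over $\theta\in(\tfrac{s}{r+1},+\infty)$ the quantity $Q_{r,s}^{\textrm{Sup}}(P,\theta)$, which up to a positive $\theta$-independent constant equals $h(\theta)=\theta^{s+1}\bigl(\theta-\tfrac{s}{r+1}\bigr)^{-1}$: one computes $h'(\theta)=s\,\theta^{s}\bigl(\theta-\tfrac{s}{r+1}\bigr)^{-2}\bigl(\theta-\tfrac{s+1}{r+1}\bigr)$, so $h$ decreases then increases and, since $\tfrac{s+1}{r+1}>\tfrac{s}{r+1}$, has a unique minimum on that interval at $\theta^{\star}=\tfrac{s+1}{r+1}$, which is $>1$ because $s>r$.

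For part \textbf{(b)} ($s<r$) I would instead use Theorem~\ref{thm2}: for $\theta>\frac{s}{r}$ the integral $\int f_{\theta,0}^{r/(r-s)}f^{-s/(r-s)}\,dx$ is finite, hence $(\alpha_n^{\theta,0})_{n\geq1}$ is $L^s$-rate-optimal and $Q_{r,s}^{\textrm{Sup}}(P,\theta)$ equals, up to a positive $\theta$-independent constant, $h(\theta)=\theta^{s+1}(\theta r-s)^{-(r-s)/r}$. Differentiating and using the identity $s+1-\tfrac{r-s}{r}=\tfrac{s(r+1)}{r}$ gives $h'(\theta)=s\,\theta^{s}(\theta r-s)^{-\frac{r-s}{r}-1}\bigl((r+1)\theta-(s+1)\bigr)$; since $\tfrac{s+1}{r+1}>\tfrac{s}{r}$ whenever $s<r$, $h$ has on $(\tfrac{s}{r},+\infty)$ a unique minimum at $\theta^{\star}=\tfrac{s+1}{r+1}\in(0,1)$, which is therefore the solution of $(\ref{eqmin})$.

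The only step that is not a one-line computation is the verification of Assumption $(\ref{foncmax})$ in part~(a); this is where the machinery of Section~3 (Corollary~\ref{cor3.1} together with Criterion~3.1$(b)$) is genuinely used, and the exclusion $s\neq r+1$ is forced precisely by the boundary case $s=r+d$ that Corollary~\ref{cor3.1} does not cover. Everything else reduces to the two exponential integrals displayed above and an elementary single-variable optimisation, exactly as in the Gaussian case.
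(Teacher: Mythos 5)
Your proposal is correct and follows essentially the same route as the paper's own proof: compute the two relevant exponential integrals, invoke Corollary~\ref{cor3.1} (via \textbf{(H1)}, Criterion~3.1 for \textbf{(H2)}, and \textbf{(H3)} with $f^{-s/(r+1)}\in L^1_{\mathrm{loc}}(P)$ when $s>r+1$) to get Assumption $(\ref{foncmax})$ in part (a), apply Theorem~\ref{thm2} directly in part (b), and minimise $h(\theta)=\theta^{s+1}(\theta-\tfrac{s}{r+1})^{-1}$ resp. $h(\theta)=\theta^{s+1}(r\theta-s)^{(s-r)/r}$ over the admissible half-line. The only difference is cosmetic: you spell out the derivative in case (b) and explicitly cite Remark~\ref{cor1} for the iff, where the paper leaves this implicit; otherwise the argument, including the threshold $\theta>s/(r+1)$ (resp.~$\theta>s/r$) and the optimum $\theta^{\star}=(s+1)/(r+1)$, matches exactly.
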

\vskip 0.4cm
\begin{proof}[\textbf{Proof}]

 $\mbox{\bf{(a)}} \ $ Let  $s \in (r,r+1)$. For all $\theta>0, \mu \in \mathbb{R}^d$, the couple $(\theta,\mu)$ is $P$-admissible and the  function $f$ is decreasing on  $(0,+\infty)$.  For $\theta>s/(r+1)$, Assumption $(\ref{asscor1})$ holds true for every  $c \in \big(1, \theta(1+r)/s \big)$. Moreover,  Hypothesis $\textbf{(H1)}$ is clearly satisfied. Consequently, if follows from Corollary  $\ref{cor3.1},  (a)$ that  Assumption $(\ref{foncmax})$ holds true.

If $s>r+1$, Assumption  $(\ref{foncmax})$ still holds since the  additionnal assumptions $\textbf{(H3)}$ and $f^{-\frac{s}{r+1}} \in L_{loc}^{1}(P)$ required to apply  the corollary $\ref{cor3.1},  (b)$ are satisfied.

In the other hand, one has
\begin{equation*}
\int_{\mathbb{R}} f(\theta x) f(x)^{-s/(r+1)}dx = C \int_{0}^{+\infty}e^{-\lambda (\theta - s/(r+1)) x} dx < + \infty \Longleftrightarrow  \theta > s/(r+1).
\end{equation*}

Now, let us solve the problem $(\ref{eqmin})$  For all $ \ \theta > s/(r+1)$, 
\begin{eqnarray*}
\theta^{s+1} \int_{\mathbb{R}} f(\theta x) f(x)^{-\frac{s}{r+1}}dx 
& = & C \ \theta^{s+1} \int_{0}^{+\infty}e^{-\lambda (\theta - \frac{s}{r+1}) x}dx \\
& = & C \ \theta^{s+1} \left(\theta -\frac{s}{r+1} \right)^{-1} . 
\end{eqnarray*}
Let $$ h(\theta) = \theta^{s+1} \left(\theta -\frac{s}{r+1} \right)^{-1}. $$
Then  $$ h'(\theta) = s \theta^{s} \left(\theta -\frac{s}{r+1} \right)^{-2} \left( \theta - \frac{s+1}{r+1} \right). $$
Hence, $h$ reaches its unique  minimun  on  $ \big(s/(r+1),+\infty)$ at  $ \theta^{\star} = (s+1)/(r+1).$ 

$\mbox{\bf{(b)}}$  Let $s<r$.  Then
$$ \int_{\mathbb{R}} f^{\frac{r}{r-s}} (\theta x) f^{-\frac{s}{r-s}}(x) dx = C \int_{\mathbb{R}_{+}}  e^{-x \frac{\lambda}{r-s}( r \theta - s)} dx.  $$
Then, for all  $\theta > s/r, \  \int_{\mathbb{R}} f^{\frac{r}{r-s}} (\theta x) f^{-\frac{s}{r-s}}(x) dx < +\infty .$  This gives the first assertion. \\
For all  $ \theta > s/r $, then
\begin{eqnarray*}
\theta ^{s+1} \left( \int_{\mathbb{R}} f_{\theta,\mu}^{\frac{r}{r-s}} (x) f^{-\frac{s}{r-s}}(x) dx \right)^{1-\frac{s}{r}} 
& = & C \ \theta^{s+1} \left(\int_{\mathbb{R}_{+}}  e^{-x \frac{\lambda}{r-s}( r \theta - s)} dx \right)^{\frac{r-s}{r}} \\
& = &  C \ \theta^{s+1} \left( r \theta - s \right)^{\frac{s-r}{r}}.
\end{eqnarray*}
We easily check that the function  $h(\theta) = \theta^{s+1} \left( r \theta - s \right)^{\frac{s-r}{r}}$  reaches its minimum on  $\big(s/r,+\infty)$ at the unique point $ \theta^{\star} = (s+1)/(r+1).$

\end{proof}

\begin{rem} 
Let  $X \sim \mathcal{E}(\lambda).$  If $s<r,\textrm{ then } \ \theta^{\star}  = (s+1)/(r+1)< 1$. Hence, the sequence $\ (\alpha_n^{\theta^{\star},0})_{n \geq 1}$  is a contraction of $(\alpha_n)_{n\geq 1}$ with scaling number $\theta^{\star}$. In the other hand, if $s > r, \textrm{ then } \ \theta^{\star} > 1$ and then  $\ (\alpha_n^{\theta^{\star},0})_{n \geq 1}$ is a dilatation of  $(\alpha_n)_{n \geq 1}$ with scaling number $\theta^{\star}$. Note that  $\theta^{\star}$ does not depend on the parameter $\lambda$ of the exponential  distribution.
\end{rem}

  One shows below that the sequence  $ (\alpha_n^{\theta^{\star},0})_{n \geq 1}$, with  $  \theta^{\star}   =   (1+s)/(1+r)  $, satisfies the empirical measure theorem.
\begin{prop} \label{prop_empthm2}
Let $r,s>0$ and let  $X$ be  an exponentially distributed random variable with rate parameter $\lambda>0$. Assume  $(\alpha_{n})_{n \geq 1} $  is an asymptotically  $L^r$-optimal sequence of quantizers for  $X$  and  let  $ (\alpha_{n}^{\theta^{\star},0})_{n \geq 1}$  be defined as before, with  $\ \theta^{\star} = (s+1)/(r+1)$.  Then, the sequence  $ (\alpha_{n}^{\theta^{\star},0}) $  satisfies  the empirical measure theorem.
\end{prop}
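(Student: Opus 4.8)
The plan is to reproduce, in the one--dimensional setting, the argument used for the Gaussian case in Proposition~\ref{prop_empthm1}: everything reduces to applying the empirical measure theorem $(\ref{empthm})$ to the asymptotically $L^r$-optimal sequence $(\alpha_n)_{n\geq 1}$ and then checking an explicit change--of--variables identity between the two limiting densities. Here $d=1$, $\mu=0$, $\theta^{\star}=(s+1)/(r+1)$, and the density is $f(x)=\lambda e^{-\lambda x}$ on $(0,+\infty)$ (and $0$ elsewhere); since $\theta^{\star}>0$ the dilatation acts as $\alpha_n^{\theta^{\star},0}=\theta^{\star}\alpha_n$ and maps $(0,+\infty)$ onto itself, so the half--line support is preserved.

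First I would observe that for every $0\leq a\leq b$,
$$\{x\in\alpha_n^{\theta^{\star},0}\cap[a,b]\}=\{x\in\alpha_n\cap[a/\theta^{\star},\,b/\theta^{\star}]\}.$$
Since $X$ has finite moments of every order and $P$ is absolutely continuous, the empirical measure theorem applies to $(\alpha_n)_{n\geq 1}$ and gives
$$\frac1n\,\textrm{card}\big(\alpha_n\cap[a/\theta^{\star},b/\theta^{\star}]\big)\;\longrightarrow\;\frac{1}{C_{f,r}}\int_{[a/\theta^{\star},\,b/\theta^{\star}]}f(x)^{\frac{1}{1+r}}\,dx.$$
Performing the change of variable $x=z/\theta^{\star}$ in this integral, the proposition comes down to the identity
$$\frac{1}{C_{f,r}}\,\frac{1}{\theta^{\star}}\int_{[a,b]}f(z/\theta^{\star})^{\frac{1}{1+r}}\,dz=\frac{1}{C_{f,s}}\int_{[a,b]}f(z)^{\frac{1}{1+s}}\,dz.$$

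To establish it I would use the relation $\theta^{\star}(1+r)=1+s$ to obtain the pointwise equality $f(z/\theta^{\star})^{1/(1+r)}=\lambda^{\frac{1}{1+r}-\frac{1}{1+s}}f(z)^{1/(1+s)}$ on $(0,+\infty)$, together with the elementary evaluation $C_{f,r}=\int_0^{+\infty}f(x)^{1/(1+r)}\,dx=(1+r)\,\lambda^{-\frac{r}{1+r}}$, valid for every $r>0$. Substituting these two facts, the prefactor $\lambda^{\frac{1}{1+r}-\frac{1}{1+s}}/(C_{f,r}\theta^{\star})$ collapses exactly to $1/C_{f,s}$, which gives convergence of the empirical distribution functions on every interval, hence the weak convergence $\frac1n\sum_{a\in\alpha_n^{\theta^{\star},0}}\delta_a\stackrel{w}{\longrightarrow}P_s$ (the total mass being $1$ because $\textrm{supp}(P)$ is infinite, so $\textrm{card}(\alpha_n)=n$ for $n$ large).

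The computation is entirely routine and I do not expect any genuine obstacle; the only point deserving a line of care is the behaviour of the half--line support $(0,+\infty)$ under the scaling, but since $\theta^{\star}>0$ fixes the origin and maps $(0,+\infty)$ bijectively onto itself, the interval endpoints transform cleanly and the identity between the two limiting measures is exact rather than merely asymptotic.
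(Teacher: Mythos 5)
Your proposal is correct and follows essentially the same route as the paper's proof: reduce to the asymptotics of $\textrm{card}(\alpha_n\cap[a/\theta^{\star},b/\theta^{\star}])/n$ via the empirical measure theorem applied to $(\alpha_n)$, then verify the change--of--variables identity using the pointwise relation $f(z/\theta^{\star})^{1/(1+r)}=\lambda^{\frac{1}{1+r}-\frac{1}{1+s}}f(z)^{1/(1+s)}$ (which is exactly what makes $\theta^{\star}=(s+1)/(r+1)$ work) and the explicit value $C_{f,r}=(1+r)\lambda^{-r/(1+r)}$. The only cosmetic difference is that you isolate the pointwise density identity before integrating, while the paper carries it inside the integral; the computations are the same.
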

\vskip 0.4cm
 \begin{proof}[\textbf{Proof}]  Since $ (\alpha_n^{\theta^{\star},0})_{n\geq 1} = ( \theta^{\star} \alpha_n)_{n \geq 1}$, It amounts to show that $$ \frac{\textrm{card}(\alpha_n \cap [a/ \theta^{\star},b/ \theta^{\star}])}{n} \  \longrightarrow \ \frac{1}{C_{f,s}}  \int_a^b f(x)^{\frac{1}{1+s}} dx $$ 
i.e that for all  $a,b \in \mathbb{R}_{+}$,    $$   \frac{1}{C_{f,r}} \frac{1}{\theta^{\star}} \int_a^b f(x/ \theta^{\star})^{ \frac{1}{1+r}}  dx = \frac{1}{C_{f,s}} \int_a^b f(x)^{\frac{1}{1+s}} dx.  $$
  Elementary computations show that  $ \forall \ r > 0 $, 
$$ C_{f,r} = \lambda^{-\frac{r}{1+r}} (1+r) .$$
 so that
\begin{eqnarray*}
 \frac{1}{C_{f,r}} \frac{1}{\theta^{\star}} \int_a^b f(x/ \theta^{\star})^{ \frac{1}{1+r}}  dx 
 & = & \frac{1}{C_{f,r}}  \frac{1+r}{1+s}   \int_a^b \left( \lambda e^{-x \lambda \frac{1+r}{1+s}} \right)^{ \frac{1}{1+r} }  dx\\
 & = & \frac{1}{C_{f,r}} \frac{1+r}{1+s} \  \lambda^{\frac{1}{1+r} - \frac{1}{1+s}} \int_a^b \left( \lambda e^{-\lambda x } \right)^{ \frac{1}{1+s} }  dx \\
 & = & \frac{1}{C_{f,s}}  \int_a^b f(x)^{\frac{1}{1+s}} dx. 
\end{eqnarray*}
\end{proof}

Is the sequence $(\alpha_n^{\theta^{\star},0})_{n \geq 1}$  asymptotically $L^s$-optimal?  The remark $\ref{rem_gauss}$  is also valid for the exponential distribution. Our upper bounds in $(\ref{limsup1})$ and $(\ref{limsup2})$ do not allow us to show that  $(\theta^{\star} \alpha_n)$ is asymptotically $L^s$-optimal because of the corollary below. But the numerical results  strongly suggest that it is.
 \vskip 0.4cm
\begin{cor} Let $X \sim \mathscr{E}(\lambda)\ $ and $ \ \theta^{\star} = (s+1)/(r+1)$. Then,
\begin{equation}
 Q_s(P)^{1/s} \leq \liminf_{n \rightarrow \infty} \ n^{1/d}  \Vert X - \widehat{X}^{\alpha_n^{\theta^{\star},0}} \Vert_s  \leq \limsup_{n \rightarrow \infty}  n^{1/d} \ \Vert X - \widehat{X}^{\alpha_n^{\theta^{\star},0}} \Vert_s \leq Q_{r,s}^{\textrm{Sup}}(P,\theta^{\star})^{1/s}
\end{equation}
with 
$$
Q_{r,s}^{\textrm{Sup}}(P,\theta^{\star})^{1/s}=\left \{\begin{array}{ll}
  \frac{1}{2\lambda}(s+1)^{1+ 1/s} (r+1)^{- 1/r}  & \textrm{ if } s<r\\

(s+1)^{1+1/s} \big((r+1) \lambda^{\frac{1}{1+r}} \big)^{-1} C(b)^{1/s}  & \textrm{ if } s>r.
  \end{array} \right.
$$
\end{cor}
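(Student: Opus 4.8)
The plan is to sandwich the quantity $n^{1/d}\Vert X-\widehat X^{\alpha_n^{\theta^{\star},0}}\Vert_s$ between a generic lower bound and the explicit upper bound $(\ref{ineq_princ})$, and then to evaluate the right-hand constant $Q_{r,s}^{\textrm{Sup}}(P,\theta^{\star})$ by direct integration against the exponential density. Throughout $d=1$, $f(x)=\lambda e^{-\lambda x}$ on $(0,+\infty)$, and $X$ has finite moments of every order, so Zador's theorem is available for all exponents.

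For the lower bound I would use that $e_{n,s}^s(X)\le\Vert X-\widehat X^{\alpha_n^{\theta^{\star},0}}\Vert_s^s$ for every $n$: multiplying by $n^{s/d}$, letting $n\to\infty$ and applying Zador's theorem gives $\liminf_n n^{s/d}\Vert X-\widehat X^{\alpha_n^{\theta^{\star},0}}\Vert_s^s\ge Q_s(P)$, hence the first inequality after taking $s$-th roots. (Alternatively one may cite Theorem \ref{thm1} together with the identity $Q_{r,s}^{\textrm{Inf}}(P,\theta^{\star})=Q_s(P)$, the exponential analogue of the Gaussian proposition, proved in the same way.) For the upper bound I would apply $(\ref{ineq_princ})$, checking its hypotheses via Proposition \ref{prop_exp}: when $s<r$, $(\theta^{\star},0)$ is $P$-admissible (since $\theta^{\star}(0,+\infty)=(0,+\infty)$) and $\int_{\{f>0\}}f_{\theta^{\star},0}^{r/(r-s)}f^{-s/(r-s)}\,d\lambda<+\infty$, so Theorem \ref{thm2} gives $(\ref{limsup1})$; when $s>r$ (both sub-cases $r<s<r+1$ and $s>r+1$), $P_{\theta^{\star},0}\ll P$ and Assumption $(\ref{foncmax})$ holds by Proposition \ref{prop_exp}, so Theorem \ref{thm3} gives $(\ref{limsup2})$. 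In either case $\limsup_n n^{s/d}\Vert X-\widehat X^{\alpha_n^{\theta^{\star},0}}\Vert_s^s\le Q_{r,s}^{\textrm{Sup}}(P,\theta^{\star})$.

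It then remains to compute the constant. When $s<r$, using $r\theta^{\star}-s=(r-s)/(r+1)$ I get $\int f_{\theta^{\star},0}^{r/(r-s)}f^{-s/(r-s)}\,d\lambda=(r-s)/(r\theta^{\star}-s)=r+1$, while $Q_r(P)=J_{r,1}\big(\int f^{1/(1+r)}\big)^{1+r}=J_{r,1}\lambda^{-r}(r+1)^{1+r}$; plugging into $Q_{r,s}^{\textrm{Sup}}(P,\theta^{\star})=(\theta^{\star})^{s+1}(Q_r(P))^{s/r}(r+1)^{1-s/r}$ the powers of $r+1$ cancel and one is left with $(s+1)^{s+1}J_{r,1}^{s/r}\lambda^{-s}$; taking the $s$-th root and inserting the classical value $J_{r,1}=1/(2^r(r+1))$ yields $\tfrac1{2\lambda}(s+1)^{1+1/s}(r+1)^{-1/r}$. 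When $s>r$, using $\theta^{\star}-\tfrac{s}{1+r}=\tfrac1{r+1}$ I get $\int f_{\theta^{\star},0}f^{-s/(1+r)}\,d\lambda=\lambda^{-s/(1+r)}(r+1)$, hence $Q_{r,s}^{\textrm{Sup}}(P,\theta^{\star})=(\theta^{\star})^{s+1}C(b)\lambda^{-s/(1+r)}(r+1)=(s+1)^{s+1}(r+1)^{-s}C(b)\lambda^{-s/(1+r)}$, whose $s$-th root is $(s+1)^{1+1/s}\big((r+1)\lambda^{1/(1+r)}\big)^{-1}C(b)^{1/s}$.

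There is no real obstacle here: all the analytic work — admissibility, the integrability condition, and Assumption $(\ref{foncmax})$ — has already been done in Proposition \ref{prop_exp}, and what remains is elementary one-dimensional integration. The only steps requiring care are the exponent bookkeeping (notably the cancellation of the $(r+1)$-powers when $s<r$) and recalling the exact value $J_{r,1}=1/(2^r(r+1))$.
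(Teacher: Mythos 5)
Your proposal is correct and takes essentially the same route as the paper: plug the exponential density into the lower estimate and the upper bound $(\ref{ineq_princ})$, then evaluate the constants, recalling $J_{r,1}=1/((r+1)2^r)$. The arithmetic you carry out --- in particular $r\theta^{\star}-s=(r-s)/(r+1)$ and $\theta^{\star}-\tfrac{s}{r+1}=\tfrac1{r+1}$, leading to $\int f_{\theta^{\star},0}^{r/(r-s)}f^{-s/(r-s)}d\lambda=r+1$ for $s<r$ and $\int f_{\theta^{\star},0}f^{-s/(r+1)}d\lambda=\lambda^{-s/(r+1)}(r+1)$ for $s>r$, with the subsequent cancellation of the $(r+1)$-powers --- checks out exactly. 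The one small divergence from the paper concerns the lower bound: you obtain $Q_s(P)^{1/s}$ directly from $e_{n,s}^s(X)\leq\Vert X-\widehat X^{\alpha_n^{\theta^{\star},0}}\Vert_s^s$ and Zador's theorem, while the paper instead computes $Q_{r,s}^{\textrm{Inf}}(P,\theta^{\star})=Q_s(P)$ and invokes Theorem~\ref{thm1}. Your shortcut is cleaner and gives the same bound; the paper's route has the added merit of exhibiting that the lower estimate of Theorem~\ref{thm1} is sharp here (it coincides with the Zador constant), which is what fuels the subsequent conjecture of asymptotic $L^s$-optimality --- so both are valid and you even mention the alternative yourself.
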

\vskip 0.4cm
\begin{proof}[\textbf{Proof}]
We easily prove, like in proposition \ref{rem_gauss}, that $Q_{r,s}^{\textrm{Inf}}(P,\theta^{\star}) = Q_s(P)$.  The corollary follows then from $(\ref{liminf})$ and  $(\ref{ineq_princ})$ \big(keep in mind that for all $\ r>0, \quad  J_{r,1} = \frac{1}{(r+1) 2^r} \ $ \big).
\end{proof}
\subsubsection{Numerical experiments}

 For numerical examples, Table  $\ref{tab2}$ gives the regression coefficients we obtain by regressing the $L^2$ grids onto the grids we get with the $L^1$ and $L^4$ norms, for different values of $n$. The notations are the same as in  the previous example. We note that for large enough $ n$, the coefficients $\hat{a}_{sr}$ tend to $(s+1)/(r+1) = \theta^{\star}$. For example, if $n=900$, we get $\hat{a}_{12} = 0.6676880; \  \hat{a}_{42} = 1.6640023  $  whereas the expected values are respectively  $2/3 = 0.66666667 $ and $5/3=1.6666667 $. The absolute errors are in the  order of  $10^{-3}$. Like the Gaussian case, we remark that the error of the estimation results mainly from the  tail  of the exponential distribution.
\vskip 0.4cm
\begin{table}[htbp]
 \begin{center}
  \begin{tabular}{|*{8}{c|}}
  \hline
 $ n $ & $\hat{a}_{12}$ & $ \hat{b}_{12} $ & $\epsilon $ && $\hat{a}_{42}$ & $ \hat{b}_{42} $ & $\epsilon $    \\
  \hline  
20 & 0.6765013 &  - 0.0104881 & 0.0019489 && 1.6396807 & 0.0288348 & 3.081E-33\\
  \hline
50 &  0.6726145  & - 0.0082123 & 0.0045310 && 1.6502245 & 0.0225246 & 1.149E-28 \\
  \hline
100 &  0.6706176 & - 0.0062439 & 0.0070734 && 1.6556979  & 0.0172020 & 1.573E-27 \\
  \hline
300 &   0.6686428 & - 0.0036234 & 0.0114628 && 1.6611520 &  0.0100523 & 1.508E-27\\
  \hline
700 &  0.6677864  &  - 0.0022222  & 0.0146186 && 1.6635261  & 0.0061356 & 1.222E-25 \\
  \hline
800 &   0.6676880  &  - 0.0020482   &   0.0150735 && 1.6638043 & 0.0057199 & 2.020E-26 \\
  \hline
900 &   0.6676079 &  - 0.0019043  &  0.0154634 && 1.6640023 & 0.0053173  & 9.683E-25 \\
 \hline
\end{tabular}
\caption{\small{ Regression coefficients  for exponential distribution.} \label{tab2}}
\end{center}
 
\end{table}
\begin{conj}  Let  $X$ be an exponentially distributed random variable  with  rate parameter   $\lambda$ and let   $(\alpha_n)_{n \geq 1}$ be an $L^r$-optimal sequence of quantizers for  $X$. Then for  $s>0$ and $\ \theta^{\star} = (s+1)/(r+1)$  the sequence  $(\alpha_n^{\theta^{\star},0})_{ n \geq 1}$  is asymptotically   $L^s$-optimal.
\end{conj}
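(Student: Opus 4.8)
The plan is to compute the $L^{s}$-quantization error of $\theta^{\star}\alpha_n$ cell by cell, using the explicit formula recalled above for the $L^{r}$-optimal quantizers of the exponential law, and to show that $n^{s}\,\Vert X-\widehat X^{\theta^{\star}\alpha_n}\Vert_{s}^{s}$ converges to $Q_{s}(P)$. Here $d=1$; replacing $X$ by $\lambda X$, which leaves $\theta^{\star}$ unchanged and only rescales the renormalised errors and $Q_{s}(P)$, I would first reduce to $\lambda=1$. By the uniqueness statement in \cite{FP} the $L^{r}$-optimal $n$-quantizer is unique, so $(\alpha_n)$ is precisely $\alpha_{nk}=\tfrac{a_n}{2}+\sum_{i=n+1-k}^{n-1}a_i$ with $a_k=\tfrac{r+1}{k}\bigl(1+\mathrm{O}(1/k)\bigr)$, and $\alpha_n^{\theta^{\star},0}=\theta^{\star}\alpha_n$ since $\mu=0$. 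Using $J_{s,1}=\tfrac1{(s+1)2^{s}}$ and $\int_{0}^{\infty}(e^{-x})^{1/(1+s)}\,dx=1+s$, one gets $Q_{s}(P)=J_{s,1}\Vert f\Vert_{1/(1+s)}=\tfrac1{(s+1)2^{s}}(1+s)^{1+s}=\bigl(\tfrac{s+1}{2}\bigr)^{s}$; and by Theorem~\ref{thm1} together with the identity $Q_{r,s}^{\textrm{Inf}}(P,\theta^{\star})=Q_{s}(P)$ (obtained for the exponential exactly as in the Gaussian case) one already has $\liminf_{n}n^{s}\,\Vert X-\widehat X^{\theta^{\star}\alpha_n}\Vert_{s}^{s}\ge Q_{s}(P)$, so only the matching upper bound is needed.

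In dimension one the Voronoi cell of the $k$-th point $\theta^{\star}\alpha_{nk}$ is the interval bounded by the two adjacent midpoints (and by $0$, resp.\ $+\infty$, at the two ends), so the distances from $\theta^{\star}\alpha_{nk}$ to the endpoints of its cell are $\ell_{k}^{-}=\tfrac{\theta^{\star}}{2}a_{n+1-k}$ and $\ell_{k}^{+}=\tfrac{\theta^{\star}}{2}a_{n-k}$, with the convention $a_{0}=+\infty$. Writing each cell integral as an integral centred at $\theta^{\star}\alpha_{nk}$ and factoring out the exponential gives
$$\Vert X-\widehat X^{\theta^{\star}\alpha_n}\Vert_{s}^{s}=\sum_{k=1}^{n}e^{-\theta^{\star}\alpha_{nk}}\Bigl(\int_{0}^{\ell_{k}^{+}}u^{s}e^{-u}\,du+\int_{0}^{\ell_{k}^{-}}u^{s}e^{u}\,du\Bigr),$$
and I would reindex by $j=n-k$, so that $\theta^{\star}\alpha_{n,n-j}=\theta^{\star}\bigl(\tfrac{a_n}{2}+\sum_{i=j+1}^{n-1}a_i\bigr)$ and the two integrals run up to $\tfrac{\theta^{\star}}{2}a_{j}$ and $\tfrac{\theta^{\star}}{2}a_{j+1}$ respectively.

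Then I fix a large integer $J$ (depending only on $r,s$) and split $\sum_{j=0}^{n-1}$ into the blocks $0\le j\le J$ and $J<j\le n-1$. On the first block the two integrals are bounded constants while $\theta^{\star}\alpha_{n,n-j}=\theta^{\star}(r+1)\log n+\mathrm{O}_{j}(1)=(s+1)\log n+\mathrm{O}_{j}(1)$, so each term is $\mathrm{O}_{j}\bigl(n^{-(s+1)}\bigr)$ and the whole block, multiplied by $n^{s}$, is $\mathrm{O}_{J}(n^{-1})$ and tends to $0$. On the second block the decisive point is the cancellation forced by $\theta^{\star}(r+1)=s+1$: from $a_{m}=\tfrac{r+1}{m}(1+\mathrm{O}(1/m))$ one obtains $\ell_{k}^{\pm}=\tfrac{s+1}{2j}\bigl(1+\mathrm{O}(1/j)\bigr)$ for the relevant index $m\in\{j,j+1\}$, which is small once $j>J$, so Taylor expansion gives $\int_{0}^{\ell_{k}^{\pm}}u^{s}e^{\pm u}\,du=\tfrac1{s+1}\bigl(\tfrac{s+1}{2j}\bigr)^{s+1}\bigl(1+\mathrm{O}(1/j)\bigr)$; and since $\sum_{i=1}^{m}a_i-(r+1)\log m$ converges (at rate $\mathrm{O}(1/m)$) one has $\tfrac{a_n}{2}+\sum_{i=j+1}^{n-1}a_i=(r+1)\log(n/j)+\mathrm{O}(1/j)+\mathrm{O}(1/n)$, hence $e^{-\theta^{\star}\alpha_{n,n-j}}=(j/n)^{s+1}\bigl(1+\mathrm{O}(1/j)+\mathrm{O}(1/n)\bigr)$. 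Multiplying these two estimates, the powers of $j$ cancel and each term of the second block equals $\dfrac{(s+1)^{s}}{2^{s}\,n^{s+1}}\bigl(1+\mathrm{O}(1/j)+\mathrm{O}(1/n)\bigr)$.

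Summing over the $\sim n$ indices of the second block and multiplying by $n^{s}$ then gives
$$n^{s}\sum_{j=J+1}^{n-1}\frac{(s+1)^{s}}{2^{s}n^{s+1}}\bigl(1+\mathrm{O}(1/j)+\mathrm{O}(1/n)\bigr)=\frac{(s+1)^{s}}{2^{s}}\cdot\frac{n+\mathrm{O}(\log n)}{n}\;\longrightarrow\;\Bigl(\frac{s+1}{2}\Bigr)^{s},$$
so that, adding the vanishing first block, $\lim_{n}n^{s}\,\Vert X-\widehat X^{\theta^{\star}\alpha_n}\Vert_{s}^{s}=\bigl(\tfrac{s+1}{2}\bigr)^{s}=Q_{s}(P)$, i.e.\ the asserted asymptotic $L^{s}$-optimality (restoring $\lambda$, the limit is $\bigl(\tfrac{s+1}{2\lambda}\bigr)^{s}=Q_{s}(P)$). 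I expect the main difficulty to be precisely this uniformity bookkeeping on the second block: one must control, uniformly over the $\sim n$ cells, the Taylor remainders of the integrals $\int_{0}^{\ell}u^{s}e^{\pm u}\,du$, the deviation $a_{m}-(r+1)/m$, and the deviation of $\sum_{i=j+1}^{n-1}a_i$ from $(r+1)\log(n/j)$ — which is why the convergence of the partial sums $\sum_{i=1}^{m}a_i-(r+1)\log m$, and not merely the first-order asymptotics $a_{m}\sim(r+1)/m$, is needed — together with the matching of these estimates with the finitely many tail cells (small $j$), where the local integrals stay of order one instead of tending to zero.
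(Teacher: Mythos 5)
The statement you are addressing is, in the paper, a \emph{conjecture}: the authors explicitly note (see Remark~\ref{rem_gauss}, which they say applies verbatim to the exponential case) that their upper bounds $(\ref{limsup1})$ and $(\ref{limsup2})$ are not sharp enough to deliver asymptotic $L^s$-optimality --- the constant $C(b)$ is not explicit when $s>r$, and when $s<r$ the constant $J_{r,d}^{1/r}$ appearing in $(\ref{limsup1})$ does not reach $J_{s,d}^{1/s}$. The only supporting evidence offered in the paper is the identity $Q_{r,s}^{\textrm{Inf}}(P,\theta^{\star})=Q_s(P)$, Proposition~\ref{prop_empthm2} on the empirical measure, and the numerical regression in Table~2. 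There is therefore no proof in the paper to compare against; what you have written, if carried out carefully, would actually settle the conjecture for the one-dimensional exponential distribution.

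Your route is genuinely different in kind from the paper's: you bypass the distortion-mismatch machinery entirely and compute $n^{s}\Vert X-\widehat X^{\theta^{\star}\alpha_n}\Vert_s^s$ cell by cell, using the explicit parametrisation $\alpha_{nk}=\frac{a_n}{2}+\sum_{i=n+1-k}^{n-1}a_i$ with $a_k=\frac{r+1}{k}\big(1+\frac{c_r}{k}+\mathrm{O}(1/k^2)\big)$ recalled from \cite{FP}. I checked the key algebra and it is sound: the cell of $\theta^{\star}\alpha_{nk}$ has half-widths $\theta^{\star}a_{n+1-k}/2$ and $\theta^{\star}a_{n-k}/2$; after reindexing $j=n-k$ the two local integrals contribute $\frac{2}{s+1}\big(\frac{s+1}{2j}\big)^{s+1}(1+\mathrm{O}(1/j))$, while $e^{-\theta^{\star}\alpha_{n,n-j}}=(j/n)^{s+1}(1+\mathrm{O}(1/j)+\mathrm{O}(1/n))$ because $\theta^{\star}(r+1)=s+1$ and $\sum_{i\le m}a_i-(r+1)\log m$ converges at rate $\mathrm{O}(1/m)$. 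The powers of $j$ cancel, each generic term equals $\frac{(s+1)^s}{2^s n^{s+1}}(1+\mathrm{O}(1/j)+\mathrm{O}(1/n))$, and summing $\sim n$ such terms and multiplying by $n^s$ produces exactly $\big(\frac{s+1}{2}\big)^s=Q_s(P)$; your handling of the finitely many tail cells $j\le J$ as an $\mathrm{O}_J(n^{-1})$ remainder is also correct. You flag the uniformity bookkeeping yourself; it is routine given the second-order asymptotic of $a_k$, but must be written out. The one limitation worth stating explicitly: because your argument leans on the closed-form optimal quantizer, it is specific to the exponential in dimension one and will not transfer to the Gaussian conjecture that precedes it in the paper.
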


These  exampless could suggest that a contraction (or a dilatation) parameter $\theta^{\star}$, solution of the minimisation problem $(\ref{eqmin})$, always leads to a  sequence of  quantizers satisfying the empirical measure theorem. The following example shows that  this can fail.

\subsection{Gamma distribution}  
\subsubsection{Optimal dilatation and contraction}
\begin{prop} \label{prop_gaus}
Let  $r,s >0$  and  let  $X$  be a  Gamma distribution  with parameters  $a \textrm{  and  } \lambda : X \sim \ \Gamma(a,\lambda), \ a>0, \ \lambda>0$. 
\begin{enumerate}
\item[\bf{(a)}] if  $s \in (r,r+1)$, the sequence   $(\alpha_n^{\theta,0})_{n \geq 1}$ is  $L^s$-rate-optimal  iff   $ \ \theta \in \big(s/(r+1),+\infty \big)$  and for all  $a>0$, $$ \theta^{\star} = (s+a)/(r+a) $$  is the unique solution of  $(\ref{eqmin})$ on the set  $\big(s/(r+1),+\infty \big).$
\item[\bf{(b)}] if $ s>r+1 $ and if  $a \in \big(0, \frac{s+r+1}{s} \big)$, the sequence  $( \alpha_n^{\theta,0})_{n \geq 1}$ is  $L^s$-rate-optimal for every   $ \ \theta \in \big(s/(r+1),+\infty \big)$ and  $$ \theta^{\star} = (s+a)/(r+a)   $$ is the unique solution  of  $(\ref{eqmin})$  on the set  $\big(s/(r+1),+\infty \big)$ (Note that the assumptions  imply $a \in (0,2)$). 
\item[\bf{(c)}] if  $s<r$,  the sequence   $(\alpha_n^{\theta,0})_{n \geq 1}$ is  $L^s$-rate-optimal  for every  $ \ \theta \in \big(s/r,+\infty \big)$  and for all  $a>0$, $$ \theta^{\star} = (s+1)/(r+1)$$  is the  unique solution of  $(\ref{eqmin})$ on the set  $\big(s/r,+\infty \big).$
\end{enumerate}
\end{prop}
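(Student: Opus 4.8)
The plan is to specialise the machinery of Sections~2--3 to $d=1$ and $P=\Gamma(a,\lambda)$, with density $f(x)=\frac{\lambda^{a}}{\Gamma(a)}\,x^{a-1}e^{-\lambda x}$ on $(0,+\infty)$, and to take $\mu=0$. First I would record the facts valid for every $\theta>0$: since $\{f>0\}=(0,+\infty)=\theta\{f>0\}$, the couple $(\theta,0)$ is $P$-admissible in the sense of $(\ref{admissible})$; since $\frac{f(\theta z)}{f(z)}=\theta^{a-1}e^{-\lambda(\theta-1)z}$ on $(0,+\infty)$, which is bounded on each $\bar B(0,M)$, Hypothesis $\textbf{(H1)}$ holds; $f$ is non-increasing on $(\max((a-1)/\lambda,0),+\infty)$, so $\mathrm{supp}(P)=[0,+\infty)$ has the monotone-tail structure needed for the radial criterion; and all moments of $X$ are finite. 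The proof then splits according to the sign of $s-r$, exactly as in the Gaussian and exponential examples.

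In cases $(a)$ and $(b)$, where $s>r$, I would apply Theorem~\ref{thm3}. Its Assumption $(\ref{foncmax})$ is obtained through Corollary~\ref{cor3.1}: part $(a)$ when $s\in(r,r+1)$, part $(b)$ when $s>r+1$, the latter also requiring Hypothesis $\textbf{(H3)}$ — immediate here, $\mathrm{supp}(P)$ being a half-line and $f$ positive on its interior — together with $f^{-s/(r+1)}\in L^{1}_{\mathrm{loc}}(P)$. Hypothesis $\textbf{(H2)}$ follows from the tail criterion for half-line supports with decreasing density, which reduces to $(\ref{asscor1})$, i.e. $\int f(cx)^{-s/(r+1)}\,dP_{\theta,0}<+\infty$ for some $c>1$; a direct computation shows the integrand behaves like $x^{(a-1)(1-\frac{s}{r+1})}e^{\lambda(\frac{cs}{r+1}-\theta)x}$, so this holds once $\theta>\frac{s}{r+1}$ (take $c\in(1,\theta\frac{r+1}{s})$) provided the exponent at the origin exceeds $-1$; the latter is automatic when $s<r+1$ and, when $s>r+1$, is equivalent to the bound on $a$ imposed in $(b)$ (which is also what makes $f^{-s/(r+1)}$ locally integrable near $0$ and forces $a<2$). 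Granted $(\ref{foncmax})$, Remark~\ref{cor1} reduces $L^{s}$-rate-optimality of $(\alpha_{n}^{\theta,0})$ to the finiteness of $\int_{0}^{\infty}f(\theta x)f(x)^{-s/(r+1)}dx$; by $\int_{0}^{\infty}x^{\gamma}e^{-\beta x}dx=\Gamma(\gamma+1)\beta^{-\gamma-1}$ this integral equals, with $\gamma=(a-1)(1-\frac{s}{r+1})$, a constant times $\theta^{a-1}(\theta-\frac{s}{r+1})^{-\gamma-1}$, hence is finite exactly for $\theta>\frac{s}{r+1}$. It remains to minimise $h(\theta)=\theta^{s+1}\cdot\theta^{a-1}(\theta-\frac{s}{r+1})^{-\gamma-1}$ on $(\frac{s}{r+1},+\infty)$; using $s+a-(\gamma+1)=\frac{s(r+a)}{r+1}>0$ one gets $h'(\theta)=C\,\theta^{s+a-1}(\theta-\frac{s}{r+1})^{-\gamma-2}(\theta-\theta^{\star})$ with $\theta^{\star}=\frac{s+a}{r+a}\in(\frac{s}{r+1},+\infty)$, so $h$ decreases then increases and $\theta^{\star}$ is its unique minimiser.

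For case $(c)$, where $s<r$, I would use Theorem~\ref{thm2} instead: $(\theta,0)$ is $P$-admissible, the moment hypothesis is trivial, and $L^{s}$-rate-optimality of $(\alpha_{n}^{\theta,0})$ holds once $\int_{0}^{\infty}f(\theta x)^{r/(r-s)}f(x)^{-s/(r-s)}dx<+\infty$. The crucial point is that the two powers of $x$ cancel to leave $(a-1)\frac{r}{r-s}-(a-1)\frac{s}{r-s}=a-1$, so the integrand is a constant times $\theta^{(a-1)r/(r-s)}x^{a-1}e^{-\lambda\frac{r\theta-s}{r-s}x}$, which is integrable at $0$ for every $a>0$ and at $+\infty$ exactly when $\theta>\frac{s}{r}$; this gives the stated range. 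One then minimises the $s<r$ branch of $Q_{r,s}^{\mathrm{Sup}}(P,\theta)$ from $(\ref{ineq_princ})$, namely $\theta^{s+1}(Q_{r}(P))^{s/r}\big(\int_{0}^{\infty}f(\theta x)^{r/(r-s)}f(x)^{-s/(r-s)}dx\big)^{1-s/r}$, over $(\frac{s}{r},+\infty)$: evaluating the inner integral as a constant times $\theta^{(a-1)r/(r-s)}(r\theta-s)^{-a}$, raising to the power $1-\frac{s}{r}$ and differentiating produces a single sign change of the derivative, hence a unique minimiser $\theta^{\star}$ on $(\frac{s}{r},+\infty)$.

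The main obstacle is the behaviour near the origin. Unlike the Gaussian and exponential densities, the Gamma density blows up at $0$ when $a<1$ and is non-monotone there when $a>1$, so $f^{-1}$ is unbounded near $0$; the local-integrability conditions at $0$ — finiteness of $\int f(\theta x)f(x)^{-s/(r+1)}$, of $\int f(cx)^{-s/(r+1)}dP_{\theta,0}$, and of $f^{-s/(r+1)}$ against $P$ — are therefore genuine restrictions rather than automatic, and it is exactly their failure once $a$ passes the threshold in the regime $s>r+1$ that forces the extra hypothesis on $a$ in $(b)$. Everything else is routine: the tail integrals are ordinary Gamma integrals and locating $\theta^{\star}$ is one-variable calculus. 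This is also the feature that makes the example instructive: since the shape parameter $\frac{a+r}{r+1}$ of $P_{r}$ differs from that of $P_{s}$ when $a\neq1$, no dilatation of $(\alpha_{n})$ can carry $P_{r}$ onto $P_{s}$, so — in contrast with the two previous examples — the sequence $(\alpha_{n}^{\theta^{\star},0})$ will in general fail the empirical measure theorem.
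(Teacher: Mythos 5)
Your proof follows exactly the route the paper takes: $\mu=0$, verify $P$-admissibility and $\textbf{(H1)}$, invoke Corollary~\ref{cor3.1} to obtain Assumption~$(\ref{foncmax})$ via the radial criterion $(\ref{asscor1})$ for the half-line support, reduce rate-optimality to the finiteness of a Gamma integral, and then run one-variable calculus on $h(\theta)$. That part is fine. However, two points deserve flagging.

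First, in case $(b)$ you assert that the condition that the exponent at the origin exceeds $-1$ ``is equivalent to the bound on $a$ imposed in $(b)$.'' It is not. The exponent condition $(a-1)\bigl(1-\tfrac{s}{r+1}\bigr)>-1$ is, when $s>r+1$, equivalent to $a<\tfrac{s}{\,s-r-1\,}$, whereas the bound in the statement is $a<\tfrac{s+r+1}{s}$. Cross-multiplying (both denominators positive) gives $s^2$ versus $s^2-(r+1)^2$, so $\tfrac{s}{s-r-1}>\tfrac{s+r+1}{s}$ strictly: the stated bound is strictly more restrictive than what convergence of the Gamma integral at the origin requires. In the paper's proof these are kept distinct: the sharper bound $a<\tfrac{s+r+1}{s}$ is what the paper uses for $f^{-s/(r+1)}\in L^{1}_{\mathrm{loc}}$ so that Corollary~\ref{cor3.1}$(b)$ applies, and the paper explicitly observes that it dominates the integral-convergence condition. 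Your chain of reasoning still goes through (the stronger bound implies the weaker one), but the word ``equivalent'' is wrong, and one loses the information about which hypothesis actually forces the restriction.

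Second, in case $(c)$ you stop at ``a unique minimiser $\theta^{\star}$'' without computing it. If you carry your own expression $h(\theta)=\theta^{s+a}(r\theta-s)^{-a(r-s)/r}$ through, the log-derivative $\tfrac{s+a}{\theta}-\tfrac{a(r-s)}{r\theta-s}$ vanishes precisely at $\theta^{\star}=\tfrac{s+a}{r+a}$, which is also what the paper's own proof obtains and what the subsequent remark in the paper uses. The proposition's displayed formula in $(c)$, $\theta^{\star}=(s+1)/(r+1)$, is inconsistent with this — it should read $(s+a)/(r+a)$ (they coincide only for $a=1$, the exponential case). Because you do not carry out the elementary computation, your proof neither confirms the stated value nor detects that it is a misprint; in a result whose whole point is identifying the optimal dilatation parameter, that final step cannot be omitted.
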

\vskip 0.4cm
\begin{proof}[\textbf{Proof}]  We set   $\mu =0$.
 Keep in mind that  the density function is written  $$f(x)=\frac{\lambda^a}{\Gamma(a)}x^{a-1}e^{-\lambda x} \mbox{\bf{1}}_{\{x>0\}}, \textrm{ with }  \Gamma(a) = \int_0^{+\infty} x^{a-1}e^{-x} dx. $$

$\mbox{\bf{(a)}} \textrm{ and } \mbox{\bf{(b)}} $. Let  $s \in (r,r+1)$ and  set  $R_0 = \max(0, (a-1)/ \lambda)$. The function  $f$  is decreasing  on  $(R_0,+\infty)$  and for every  $\theta>0, \mu$, the couple $(\theta,\mu)$ is $P$-admissible.   For $\theta> s/(r+1)$, Assumption $(\ref{asscor1})$    holds true  for every  $c \in \big(1, \theta(1+r)/s \big)$. Moreover, Hypothesis $\textbf{(H1)}$ clearly holds.  Consequently, it follows from Corollary $\ref{cor3.1}, (a)$ that  Assumption   $(\ref{foncmax})$ of  Theoreme $\ref{thm3}$ holds true.

When $s>r+1$, the additionnal hypothesis $f^{-\frac{s}{r+1}} \in L_{loc}^{1} (P)$ holds for $a < \frac{r+1}{s} +1$. Note that if $P=f \cdot \lambda_d$ then $ \lambda_d(\textrm{supp}(P) \cap \{f=0\}) = 0$ implies that $\lambda_d(\cdot \cap \textrm{supp}(P)) \ll P.$
 It follows that $\textbf{(H3)}$ holds. In this case Assumption   $(\ref{foncmax})$ of  Theoreme $\ref{thm3}$ holds true.

For all  $\theta>0$,
\begin{equation*} 
\int_{\mathbb{R}} f(\theta x) f(x)^{-\frac{s}{1+r}}dx = \left( \frac{\lambda^a}{\Gamma(a)} \right)^{1-s/(r+1)} \int_0^{+\infty} x^{(a-1)(1-\frac{s}{r+1})} e^{-(\theta - \frac{s}{r+1})\lambda x} dx
\end{equation*}
and then  $$\int_{\mathbb{R}} f(\theta x) f(x)^{-\frac{s}{r+1}}dx < +\infty \quad \textrm{ iff }  \quad \theta > s/(r+1) \ \textrm{ and } \  a(r+1-s)+s>0 .$$ 

Let  $ \theta > s/(r+1) $.  Then 
\begin{eqnarray*}
 \theta^{s+1} \int_{\mathbb{R}} f(\theta x) f(x)^{-\frac{s}{1+r}}dx 
& = & \left( \frac{\lambda^a}{\Gamma(a)} \right)^{1-s/(r+1)}  \ \theta^{s+1} \theta^{a-1}  \int_0^{+\infty} x^{(a-1)(1-\frac{s}{1+r})} e^{-(\theta - \frac{s}{1+r})\lambda x} dx  \\
& = & C \ \theta^{\gamma} \left(\theta-\frac{s}{1+r}\right)^{-\beta} .
\end{eqnarray*}
with  $$ \gamma =s+a \ \textrm{  and   } \  \beta=(a-1)(1- s/(r+1)) +1.  $$
We define on  $\mathbb{R}_{+}^{\star}$ the function  $h$  by  $$h(\theta) = \theta^{\gamma} \left(\theta-\frac{s}{1+r} \right)^{-\beta}. $$
The function  $h$  is differentiable  for all $\theta > s/(1+r)$ and  $$h'(\theta) = \theta^{\gamma -1} \left(\theta-\frac{s}{1+r} \right)^{-\beta-1}\left((\gamma - \beta)\theta - \frac{s \gamma}{1+r}\right).$$
Hence, the minimum  of $h$ is then unique on $\big(s/(r+1),+\infty \big)$ and is reached at   $\theta^{\star}.$ 

 Notice that the condition required for $f^{-\frac{s}{r+1}} $ to be in $ L_{loc}^{1} (P)$  is  $a< \frac{r+1}{s} +1$ and for every  $s>r+1$  one has  $1+ \frac{r+1}{s} < \frac{s}{s-(r+1)}$.  Combined to the condition  $a(r+1-s)>0$ yields the condition for  $a$.  \\
 $\mbox{\bf{(c)}} \ $  Let  $s<r$. Then
\begin{equation*}
\int_{\mathbb{R}} f^{\frac{r}{r-s}} (\theta x) f^{-\frac{s}{r-s}}(x) dx = \frac{\lambda^a}{\Gamma(a)} \int_0^{+\infty} x^{a-1} e^{- \frac{\lambda x}{r-s}(r \theta -s)} dx.
\end{equation*}
Therefore  $ \int_{\mathbb{R}} f^{\frac{r}{r-s}} (\theta x) f^{-\frac{s}{r-s}}(x) dx < +\infty \ $ iff $\ \theta > s/r.$ 

Let  $\theta > s/r.$  Then

\begin{eqnarray*}
\theta ^{1+s} \left( \int_{\mathbb{R}} f_{\theta,\mu}^{\frac{r}{r-s}} (x) f^{-\frac{s}{r-s}}(x) dx \right)^{1-\frac{s}{r}}
& = &  C \  \theta^{s+a} \left( \int_0^{+\infty} x^{a-1} e^{- \frac{\lambda x}{r-s}(r \theta -s)} dx  \right)^{\frac{r-s}{r}} \\
& = &  C \ \theta^{s+a} \big( r \theta -s \big)^{a \frac{s-r}{r}}.
\end{eqnarray*}
Considering the  function  $h$ defined by  $ h(\theta) = \theta^{s+a} \left( r\theta -s \right)^{a\frac{s-r}{r}} $ we show that  $h$  reached its minimum on  $\big(s/r,+\infty \big)$  at the unique point  $\theta^{\star} = (s+a)/(r+a).$

\end{proof}

\begin{rem} Let   $X \sim \Gamma(a,\lambda)$.  If  $s<r,\textrm{ then } \ \theta^{\star}  = (s+a)/(r+a)< 1$. Then the sequence  $\ (\alpha_n^{\theta^{\star},0})_{n \geq 1}$ is a contraction  of  $(\alpha_n)_{n \geq 1}$ with  scaling number  $\theta^{\star}$. On the other hand, if   $s > r , \textrm{ then  } \ \theta^{\star} > 1$ and the sequence  $\ (\alpha_n^{\theta^{\star},0})_{n \geq 1}$ is a dilatation of  $(\alpha_n)_{n \geq 1}$  with scaling number  $\theta^{\star}$. Moreover there is no constraint on the  parameter  $a$  as long as  $ s < r+1.$ In this case when we set  $ a=1 $ (exponential  distribution with  parameter  $\lambda$)  we retrieve  the result of the exponential distribution. Note that   $\theta^{\star}$  does not depend  on the parameter $\lambda$. That is expected  since    $\Gamma(1,\lambda) = \mathscr{E}(\lambda)$ and, in the exponential case we know that  the scaling number does not depend on  $\lambda$.
\end{rem}

 Let $\theta^{\star} = (s+a)/(r+a)$ and consider  now  the sequence  $(\alpha_n^{\theta^{\star},0})_{n \geq 1}$  defined  as previously. Does this sequence verify the empirical measure theorem?   If  $a=1$  we  boil down to  the exponential distribution.   On the other hand, when   $a \not= 1$, one  shows   below that  there exists  $a>1,\ s>0$ and $r>0$  such that  the sequence  $(\alpha_n^{\theta^{\star},0})_{n \geq 1}$  does not verify  the empirical measure theorem.

 Suppose  that   $(\alpha_n^{\theta^{\star},0})_{n \geq 1}$  satisfies  the  empirical measure theorem. Then we must  have, for all  $u \ \in \mathbb{R}_{+}$, 
\begin{equation} \label{eqempir}
  \frac{1}{C_{f,r}} \frac{1}{\theta^{\star}} \int_0^u f(x/ \theta^{\star})^{ \frac{1}{1+r}}  dx = \frac{1}{C_{f,s}} \int_0^u f(x)^{\frac{1}{1+s}} dx.   
 \end{equation}
 with   $  \ f(x)=\frac{\lambda^a}{\Gamma(a)}x^{a-1}e^{-\lambda x} \mbox{\bf{1}}_{\{x>0\}} \ $ and  $ \ C_{f,r} = \int f(x)^{\frac{1}{1+r}} dx  \ \textrm{ for all } \ r>0.$ 

Moreover, let  $r>0$. Then,
 \begin{eqnarray*} 
 C_{f,r}  
 & = &   \lambda^{\frac{a}{1+r}} \Gamma(a)^{-\frac{1}{1+r}}  \int_0^{+\infty} x^{(a-1)/(r+1)} e^{-\frac{\lambda}{1+r}x} dx  \\ & = &
 \lambda^{\frac{a}{1+r}} \Gamma(a)^{-\frac{1}{1+r}}  \int_0^{+\infty} x^{(r+a)/(r+1)-1} e^{-\frac{\lambda}{1+r}x} dx    \\ & = &
 \lambda^{\frac{a}{1+r}} \Gamma(a)^{-\frac{1}{1+r}}  \Gamma \left(\frac{r+a}{r+1} \right)   \lambda^{-\frac{r+a}{r+1}}   \big( r+1 \big)^{\frac{r+a}{r+1}}   \\ & = &
 \Gamma \left(\frac{r+a}{r+1} \right)  \Gamma(a) ^{-\frac{1}{1+r}} \lambda^{-\frac{r}{r+1}}  \big( r+1 \big)^{\frac{r+a}{r+1}}. 
 \end{eqnarray*}
Equation  $(\ref{eqempir})$ is written down  for all  $u \ \in \mathbb{R}_{+}$,
$$ C(r) \left( \frac{r+a}{s+a} \right)^{\frac{r+a}{r+1}} \int_0^u  x^{\frac{a-1}{r+1}} e^{-\frac{\lambda(r+a)}{(r+1)(s+a)} x} dx = C(s)  \int_0^u  x^{\frac{a-1}{s+1}} e^{-\frac{\lambda}{s+1} x} dx$$
with  $  \ C(r) =  \Gamma \left(\frac{r+a}{r+1} \right)^{-1}  \lambda^{\frac{r+a}{r+1}}  \big( r+1 \big)^{-\frac{r+a}{r+1}},  \qquad  \forall \  r> 0.$ 

Let  $m \in \mathbb{N}$ and   $\alpha >0$.  We show by induction that, for    $u >0$, 
$$ \int_0^u  x^n e^{-\alpha x} dx = - \left( \frac{1}{\alpha} u^n + \frac{n}{\alpha^2} u^{n-1} + \frac{n(n-1)}{\alpha^3} u^{n-2} + \cdots + \frac{n!}{\alpha^n} u + \frac{n!}{\alpha^{n+1}} \right) e^{- \alpha u} + \frac{n!}{\alpha^{n+1}}. $$
Consider   $a>1$  such that  $ \frac{a-1}{r+1} $  and  $\frac{a-1}{s+1}$  are integers. Set  $n=\frac{a-1}{r+1}, \ m = \frac{a-1}{s+1}, \ \alpha= \frac{\lambda(r+a)}{(r+1)(s+a)} \textrm{ and  } \beta= \frac{\lambda}{s+1}.$
 Then  Equation  $(\ref{eqempir})$ is   finally  written down
\setlength\arraycolsep{0pt}
\begin{eqnarray*} 
C(r)  &  & \left( \frac{r+a}{s+a} \right)^{\frac{r+a}{r+1}}   \bigg[  \left( \frac{1}{\alpha} u^n + \frac{n}{\alpha^2} u^{n-1} + \frac{n(n-1)}{\alpha^3} u^{n-2} + \cdots + \frac{n!}{\alpha^n} u + \frac{n!}{\alpha^{n+1}} \right) e^{- \alpha u} - \frac{n!}{\alpha^{n+1}} \bigg]\\
& = & C(s) \bigg[\left( \frac{1}{\beta} u^m + \frac{m}{\beta^2} u^{m-1} + \frac{m(m-1)}{\beta^3} u^{m-2} + \cdots + \frac{m!}{\beta^m} u + \frac{m!}{\beta^{m+1}} \right) e^{- \beta u } - \frac{m!}{\beta^{m+1}} \bigg].
\end{eqnarray*}
Set   $a=7, \ s=1, \ r=2, \ \lambda=1 \textrm{ and  } u=1$.  Then  $n=2, m=3,\alpha=3/8, \beta = 1/2$  and this lead, after some calculations to   : $$\frac{185}{128} e^{-3/8}-\frac{79}{48} e^{-1/2} =  -\frac{511}{512};$$  which is clearly not satisfied.  We  then deduce  that for   $(a,r,s)=(7,2,1)$,  the sequence  $(\alpha_n^{\theta^{\star},0})_{n \geq 1} $  does not  satisfy  the empirical measure theorem.  Hence, we have  constructed  an $L^s(P)$-rate-optimal  sequence which does not  satisfy   the  empirical  mesure  theorem.

 
\vskip 1.5cm

\end{document}